\newtheorem{theorem}{Theorem}[section]
\newtheorem{remark}{Remark}[section]
\newtheorem{lemma}{Lemma}[section]
\newtheorem{definition}{Definition}[section]
\newtheorem{proposition}{Proposition}[section]
\newtheorem{example}{Example}[section]
\newtheorem{corollary}{Corollary}[theorem]
\begin{document}

\title{COHOMOLOGY RINGS OF THE PLACTIC MONOID ALGEBRA \\VIA A GR\"OBNER --- SHIRSHOV BASIS}

\author{VIKTOR LOPATKIN\footnote{School of Mathematical Sciences, South China Normal University, Guangzhou, wickktor@gmail.com}}

\maketitle

\abstract{In this paper we calculate the cohomology ring $\mathrm{Ext}^*_\mathrm{\Bbbk Pl_n}(\Bbbk, \Bbbk)$ and the Hochschild cohomology ring
of the plactic monoid algebra $\Bbbk \mathrm{Pl}_n$ via the Anick resolution using a Gr\"obner -- Shirshov basis.}

\section*{Introduction}

The plactic monoid was discovered by Knuth \cite{Knu}, who used an operation given by Schensted in his study of the longest increasing subsequence of a permutation. It was named and systematically studies by  Lascoux and Sch\"utzenberger \cite{LSch}, who allowed any totally ordered alphabet in the definition. It is known that the elements of plactic monoid can be written in the canonical form, and in this form can be identified with some type of the Young tableaux. Because of its strong relations to Young tableaux, the plactic monoid has already become a classical tool in several areas of representation theory and algebraic combinatorics \cite{Fulton}.

\par Among the significant applications are: a proof of the Littlewood--Richardson rule, an algorithm which allows to decompose tensor product of representations of unitary groups, a combinatorial description of the Kostka~--- Foulkes polynomials, which arise as entries of character table of the finite linear group. The plactic monoid appeared also in: theory of modular representations of the symmetric group, quantum groups, via the representation theory of quantum enveloping algebras. It is worth mentioning that even though the combinatorics of the plactic monoid has been extensively studied, there are only a few preliminary results of the corresponding plactic algebra over a field \cite{CO}.

\par The plactic monoid was connected to the free 2-nilpotent Lie algebra (which is a subalgebra of the parastatistics algebra)
in the work \cite{LodP} by J-L. Loday and T. Popov. The connection is through the quantum deformation (in the sense of Drinfeld)
of the parastatistics algebra. But the first work where the connection between the plactic monoid algebra and parastastics algebra
(in two dimensions) was found is \cite{PopovViolette}.

\par In \cite{LPl} there was given an independent proof of uniqueness of (Robinson~--- Shensted) Knuth normal forms of elements of
(Knuth~--- Sch\"utzenberger) plactic monoid.

\par Gr\"obner bases and Gr\"obner~--- Shirshov bases were invented independently by A.I. Shirshov for ideals of free
(commutative, anti-commutative) non-associative algebras \cite{Sh1,Sh2}, free Lie algebras \cite{ShE, Sh2}
by H. Hironaka \cite{Hironaka} for ideals of the power series algebras (both formal and convergent),
and by B. Buchberger \cite{Buchberger} for ideals of the polynomial algebras.

\par Cain et al \cite{Cain} use the Schensted--Knuth normal form (the set of (semistandart) Young tableaux) to prove that the multiplication table of column words (strictly decreasing words with respect to some order on the letters) forms a finite Gr\"obner~--- Shirshov basis of the finitely generated plactic monoid. In \cite{LPl} were given new explicit formulas for the multiplication tables of row (nondecreasing word) and column words and independent proofs that the resulting sets of relations are Gr\"obner~--- Shirshov bases in row and column generators respectively.

\par The Anick resolution was obtained by David J. Anick in 1986~\cite{An}. This is a resolution for a field $\Bbbk$ considered as an $A$-module,
where $A$ is an associative augmented algebra over~$\Bbbk$. This resolution reflects the combinatorial properties of $A$ because it is based
on the Composition--Diamond Lemma~\cite{LA,Be}; i.e., Anick defined the set of $n$-chains via the leading terms of a Gr\"obner~--- Shirshov basis~\cite{Sh1,Sh2,LAch}
(Anick called it the set of obstructions), and the differentials are defined inductively via $\Bbbk$-module splitting maps, the leading terms
and the normal forms of~words.

\par Later Yuji Kobayashi~\cite{Kob} obtained the resolution for a monoid algebra presented by a complete rewriting system. He constructed an effective free acyclic
resolution of modules over the algebra of the monoid whose chains are given by paths in the graph of reductions. These chains are a particular case of chains
defined by Anick~\cite{An}, and differentials have ``Anick's spirit", i.e., the differentials are described inductively via contracting homotopy, leading terms and normal
forms. Further Philippe Malbos~\cite{Mal} constructed a free acyclic resolution in the same spirit for $R \mathscr{C}$ as a $\mathscr{C}$-bimodule
over a commutative ring $R$, where $\mathscr{C}$ is a small category endowed with a convergent presentation. The resolution is constructed with the use of
the additive Kan extension of the Anick antichains generated by a set of normal forms. This construction can be adapted to the construction of
the analogous resolution for internal monoids in a~monoidal category admitting a finite convergent presentation. Malbos also showed
(using the resolution) that if a small category admits a finite convergent presentation then its Hochschild--Mitchell homology is of finite
type in all degrees.

\par The Anick resolution can be extended to the case of operads. The correspondence technique has been developed by Vladimir Dotsenko and Anton Khoroshkin~\cite{Dot}.

\par Michael J\"ollenbeck, Volkmar Welker \cite{JW} and independently of them Emil Sc\"oldberg \cite{Sc} developed a new technique "Algebraic Discrete Morse Theory".
 In particular, this technique makes it possible to describe the differentials of the Anick resolution; in fact, we have a very useful machinery for constructing
homotopy equivalent complexes just using directed graphs. Algebraic Discrete Morse Theory is algebraic version of Forman's Discrete Morse theory \cite{For1}, \cite{For2}.
Discrete Morse theory allows to construct, starting from a (regular) $\mathrm{CW}$-complex, a new homotopy-equivalent $\mathrm{CW}$-complex with fewer cells.

\par In this paper, we use this technique (the J\"ollenbeck --- Sc\"oldberg --- Welker machinery) for calculating the cohomology ring and the Hochschild cohomology ring of the plactic monoid algebra.

\section{Preliminaries.}

Let us recall some definitions and the basic concept of Algebraic Discrete Morse theory \cite{JW}, \cite{Sc}.

\smallskip

\paragraph{Basic concept.} Let $R$ be a ring and ${C}_\bullet = (C_i,\partial_i)_{i \ge 0}$ be a chain complex of free $R$-modules $C_i$. We choose a basis
$X= \cup_{i \ge 0}X_i$ such that $C_i \cong \bigoplus_{c \in X_i}Rc$. Write the differentials $\partial_i$ with respect to the basis $X$ in the following form:
$$
\partial_i: \begin{cases} {C}_i \to C_{i-1} \\ c \mapsto \partial_i(c) = \sum\limits_{x' \in X_{i-1}}[c:c']\cdot c'. \end{cases}
$$

\smallskip

\par Given a~complex $C_\bullet$ and a~basis $X$, we construct a directed weighted graph $\Gamma({C}) = (V,E)$. The set of vertices $V$ of $\Gamma({C})$
is the basis $V=X$ and the set $E$ of weighted edges is given by the rule
$$
(c,c',[c:c']) \in E \qquad \mbox{iff} \qquad c \in X_i, c' \in X_{i-1}, \, \mbox{and} \, [c:c'] \ne 0.
$$

\smallskip

\begin{definition}
A finite subset $\mathcal{M} \subset E$ in the set of edges is called an acyclic matching if it satisfies the following three conditions:
\begin{itemize}
\item \textup{(Matching)} Each vertex $v \in V$ lies in at most one edge $e \in \mathcal{M}$.

\item \textup{(Invertibility)} For all edges $(c, c' [c:c']) \in \mathcal{M}$ the weight $[c:c']$ lies in the center $\mathrm{Z}(R)$ of the ring $R$ and is a unit in $R$.
\item \textup{(Acyclicity)} The graph $\Gamma_\mathcal{M}(V,E_\mathcal{M})$ has no directed cycles, where $E_\mathcal{M}$ is given by
$$
E_\mathcal{M} : = (E \setminus \mathcal{M}) \cup \{(c',c, [c:c']^{-1}) \, \mbox{ with } \, (c,c', [c:c']) \in \mathcal{M}\}.
$$
\end{itemize}
\end{definition}

\smallskip

\par For an acyclic matching $ \mathcal{M}$ on the graph $\Gamma(C_\bullet) = (V,E)$, we introduce the following notation, which is an adaption
of the notation introduced in \cite{For1} to our situation.

\begin{itemize}
\item  We call a vertex $c \in V$ \textit{critical} with respect to $\mathcal{M}$ if $c$ does not lie in an edge $e \in \mathcal{M}$; we write
$$
X^\mathcal{M}_i:=\{c \in X_i: c \mbox{ critical}\}
$$
for the set of all critical vertices of homological degree $i$.

\item We write $c' \le c$ if $c \in X_i$, $c' \in X_{i-1}$, and $[c:c'] \ne 0$.

\item $\mathrm{Path}(c,c')$ is the set of paths from $c$ to $c'$ in the graph $\Gamma_\mathcal{M}(C_\bullet)$.

\item The weight $\omega(p)$ of a path $p = c_1 \to \ldots \to c_r \in \mathrm{Path}(c_1, c_r)$ is given by
$$
\omega(c_1 \to \ldots \to c_r) : = \prod\limits_{i=1}^{r-1} \omega (c_i \to c_{i+1}),
$$

$$
\omega(c \to c') : = \begin{cases} - \dfrac{1}{[c:c']}, \mbox{ $c \le c',$} \\ [c:c'] , \mbox{ $c' \le c,$.} \end{cases}
$$

\item We write $\Gamma(c, c'):=\sum\limits_{p \in \mathrm{Path}(c,c')}\omega(p)$ for the sum of weights of all paths from $c$ to $c'$.

\end{itemize}

\smallskip

\begin{theorem}\cite[Theorem 2.2]{JW} The chain complex $(C_\bullet, \partial_\bullet)$ of free $R$-modules is homotopy-equivalent to the complex $(C^\mathcal{M}_\bullet, \partial_\bullet^\mathcal{M})$ which is complex of free $R$-modules and
$$
C_i^\mathcal{M} : = \bigoplus\limits_{c \in X_i^\mathcal{M}}Rc,
$$
$$
\partial_i^\mathcal{M}: \begin{cases} C^\mathcal{M}_i \to C^\mathcal{M}_{i-1} \\ c \mapsto \sum\limits_{c' \in X_{i-1}^\mathcal{M}}\Gamma(c,c')c'. \end{cases}
$$

\end{theorem}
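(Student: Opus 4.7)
The plan is to prove the homotopy equivalence by constructing explicit $R$-linear chain maps $\varphi_\bullet: C_\bullet \to C_\bullet^{\mathcal{M}}$ and $\psi_\bullet: C_\bullet^{\mathcal{M}} \to C_\bullet$ together with a chain homotopy $h_\bullet: C_\bullet \to C_{\bullet+1}$, all defined in terms of path weights in the modified graph $\Gamma_{\mathcal{M}}$. On a basis element $c \in X_i$, I would set $\varphi_i(c) := \sum_{c' \in X_i^{\mathcal{M}}} \Gamma(c,c')\, c'$, and on a critical cell $c \in X_i^{\mathcal{M}}$ I would set $\psi_i(c) := c + \sum_{\tilde c \notin X_i^{\mathcal{M}}} \Gamma(c,\tilde c)\, \tilde c$, where in both cases the notion of path in $\Gamma_{\mathcal{M}}$ is extended to include horizontal zig-zag steps through reversed $\mathcal{M}$-edges, so that both source and target lie in the same homological degree. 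The homotopy $h_i$ is defined analogously by summing weights over paths that terminate in an $\mathcal{M}$-edge traversed upward.

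The first step is to verify that each of these sums is finite, which is precisely the role of the acyclicity axiom: the absence of directed cycles in $\Gamma_{\mathcal{M}}$ ensures that only finitely many paths can start from any fixed vertex. The invertibility axiom guarantees that the reversed weights $-[c:c']^{-1}$ are well-defined elements of $R$, and their centrality permits the free rearrangement of products of edge-weights that the definitions of $\varphi$, $\psi$, and $h$ implicitly require in the non-commutative setting.

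With well-definedness in hand, the four identities $\varphi_\bullet \partial_\bullet = \partial_\bullet^{\mathcal{M}} \varphi_\bullet$, $\partial_\bullet \psi_\bullet = \psi_\bullet \partial_\bullet^{\mathcal{M}}$, $\varphi_\bullet \psi_\bullet = \mathrm{id}_{C^{\mathcal{M}}}$, and $\mathrm{id}_{C} - \psi_\bullet \varphi_\bullet = \partial h + h \partial$ all reduce to systematic path-splittings, using $\partial_\bullet^2 = 0$ together with the cancellation $\omega(e) \cdot \omega(\bar e) = -1$ between a matched edge and its reverse. The principal obstacle will be the combinatorial bookkeeping for the homotopy identity: one must induct on the well-founded height function on $X$ provided by acyclicity, use the matching condition to split each contributing path uniquely at its $\mathcal{M}$-edges, and pair contributions on the two sides of the equation accordingly. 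Organising this cancellation cleanly---partitioning paths by whether they stay in critical cells, descend through $\mathcal{M}$, ascend through $\mathcal{M}$, or both---is the technical heart of the proof, and is what forces one to introduce the modified graph $\Gamma_{\mathcal{M}}$ rather than working directly with $\Gamma(C_\bullet)$.
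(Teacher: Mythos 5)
The paper offers no proof of this statement --- it is quoted verbatim from [JW, Theorem 2.2] --- and your outline follows exactly the argument of that cited source: the maps $\varphi_\bullet$ and $\psi_\bullet$ you define are precisely the maps $\check h_\bullet$ and $\widehat h_\bullet$ that the paper itself records immediately afterwards (Lemma \ref{homotlemma}, taken from [JW, Appendix B, Lemma B.3]), and the verification via path-splitting and cancellation along matched edges is the standard one. Your plan is therefore correct and essentially the same as the (cited) proof; the only small caveat is that finiteness of the path sums relies on the finiteness of $\mathcal{M}$ and the local finiteness of $\partial_\bullet$ together with acyclicity, not on acyclicity alone.
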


\smallskip

\par In \cite[Appendix B, Lemma B.3]{JW}, there was constructed a contracting homotopy between the Morse complex and the original complex.
We use the same denotations.

\begin{lemma}\label{homotlemma}
Let $(C_\bullet, \partial_\bullet)$ be a complex of free $R$-modules, $\mathcal{M} \subset E$ a matching on the associated graph $\Gamma(C_\bullet) = (V,E)$,
and $(C_\bullet^\mathcal{M}, \partial_\bullet^\mathcal{M})$ the Morse complex. The following maps define a chain homotopy;
$$
\check{h}_\bullet:C_\bullet \to C_\bullet^\mathcal{M}
$$
\begin{equation}\label{h>}
X_n \ni c \mapsto h(c) = \sum\limits_{c^\mathcal{M} \in X_n^\mathcal{M}}\Gamma(c, c^\mathcal{M})c^\mathcal{M},
\end{equation}

$$
\widehat{h}_\bullet: C_\bullet^\mathcal{M} \to C_\bullet
$$
\begin{equation}\label{h<}
X_n^\mathcal{M} \ni c^\mathcal{M} \mapsto h^\mathcal{M}(c^\mathcal{M}) = \sum\limits_{c \in X_n}\Gamma(c^\mathcal{M}, c)c
\end{equation}

\end{lemma}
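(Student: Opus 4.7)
The plan is to verify the statement by a direct combinatorial analysis of the weighted graph $\Gamma_\mathcal{M}$. Interpreted in the natural way, the lemma says that $\check{h}_\bullet$ and $\widehat{h}_\bullet$ are the mutually homotopy-inverse chain maps realizing the equivalence asserted in Theorem~1.1, so there are three substantive pieces to verify: the path sums defining $\check{h}$ and $\widehat{h}$ are finite; these maps commute with the respective differentials; and together with a suitable degree-$+1$ map they realize the homotopy equivalence between $C_\bullet$ and $C_\bullet^\mathcal{M}$.

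First I would check finiteness of the defining sums. Since $\mathcal{M}$ is acyclic, every directed path in $\Gamma_\mathcal{M}$ terminating at a vertex of a prescribed degree has bounded length: each such path decomposes into finitely many ``zig-zags'' alternating ordinary (downward) edges and reversed matched (upward) edges, and acyclicity prevents the appearance of arbitrarily long ones. Hence only finitely many paths contribute to $\Gamma(c,c')$ for any fixed pair $(c,c')$, so \eqref{h>} and \eqref{h<} are well-defined. Next I would verify the chain-map property of $\check{h}$ by a path-classification argument. Given $c \in X_n$ and $c' \in X_{n-1}^\mathcal{M}$, I would partition $\mathrm{Path}(c,c')$ in $\Gamma_\mathcal{M}$ according to whether the path visits any critical vertex of degree $n$: paths that do contribute, via the factorization of $\omega$ over a concatenation at that critical intermediate vertex, to terms of $\partial_n^\mathcal{M}\check{h}_n(c)$; paths that do not correspond bijectively, with matching weights, to terms of $\check{h}_{n-1}\partial_n(c)$. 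The mirror-image argument with the roles of upward and downward edges swapped yields the chain-map property for $\widehat{h}$, and $\check{h} \circ \widehat{h} = \mathrm{id}_{C_\bullet^\mathcal{M}}$ is immediate because an unmatched vertex $c^\mathcal{M}$ cannot initiate a reversed matched edge, so the only degree-$n$-to-degree-$n$ path in $\Gamma_\mathcal{M}$ beginning at $c^\mathcal{M}$ is trivial.

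The main obstacle is producing the chain homotopy $H_\bullet$ of degree $+1$ realizing $\mathrm{id}_{C_\bullet} \simeq \widehat{h}_\bullet \circ \check{h}_\bullet$. I would define $H_n$ by collecting, for each $c \in X_n$, the weights of those paths in $\Gamma_\mathcal{M}$ whose first step raises the degree from $n$ to $n+1$ (i.e.\ traverses a reversed matched edge), truncated just before the final descent to degree $n$. Verifying the identity $\mathrm{id} - \widehat{h}\circ\check{h} = \partial H + H\partial$ then reduces to a bookkeeping lemma: every non-trivial degree-preserving path from $c \in X_n$ is counted in exactly one of the three operators on the right, with signs dictated by the two cases in the definition of $\omega$. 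Making this count precise, and tracking the interchange of the factors $-1/[c:c']$ and $[c:c']$ across the matched versus unmatched edges, is the step that demands the most care, and it is here that acyclicity of $\mathcal{M}$ enters essentially by ensuring that every path-count is finite and unambiguous.
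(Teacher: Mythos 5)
The paper does not actually prove this lemma: it is quoted verbatim from J\"ollenbeck--Welker \cite[Appendix~B, Lemma~B.3]{JW}, so there is no internal argument to compare yours against. Your plan coincides with the strategy of that reference --- define $\check{h}$, $\widehat{h}$ and a degree-$(+1)$ map $H$ by sums of weights of zig-zag paths in $\Gamma_{\mathcal{M}}$, then verify the four identities (the two chain-map properties, $\check{h}\circ\widehat{h}=\mathrm{id}$, and $\mathrm{id}-\widehat{h}\circ\check{h}=\partial H+H\partial$) by path bookkeeping --- so the overall approach is sound. Two corrections to the parts you do spell out. Finiteness of the sums is not a consequence of acyclicity alone (an infinite acyclic graph can carry arbitrarily long paths); what bounds a zig-zag here is that after an upward (reversed matched) step the next step must be downward, since each vertex lies in at most one matched edge, so a path uses at most $|\mathcal{M}|$ upward steps and $\mathcal{M}$ is finite by the paper's definition. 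And your justification of $\check{h}\circ\widehat{h}=\mathrm{id}$ needs one more line: there \emph{are} nontrivial paths from a critical $c^{\mathcal{M}}\in X_n^{\mathcal{M}}$ back to degree $n$ (down an ordinary edge, up a reversed matched edge), so $\widehat{h}(c^{\mathcal{M}})$ has nontrivial terms; the identity survives because every such endpoint is the upper vertex of a matched edge, from which no path can re-enter degree $n+1$ and hence none can terminate at a critical degree-$n$ cell, so $\check{h}$ annihilates all of those terms.

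The genuine gap is in the chain-map step. You claim that the paths from $c\in X_n$ to a critical $c''\in X_{n-1}^{\mathcal{M}}$ which avoid critical degree-$n$ vertices ``correspond bijectively, with matching weights, to terms of $\check{h}_{n-1}\partial_n(c)$''. That is false as stated: a term of $\check{h}_{n-1}\partial_n(c)$ arises from an ordinary downward edge out of $c$ (including the matched one, which is not even present in $\Gamma_{\mathcal{M}}$), whereas a path avoiding critical degree-$n$ vertices may begin with an \emph{upward} step (whenever $c$ is the lower vertex of a matched edge) and revisit degree $n$ repeatedly at non-critical cells. So $\partial^{\mathcal{M}}\check{h}=\check{h}\partial$ does not follow from a partition of paths; one must show that the contributions of the extra families of paths cancel, and the same issue reappears, amplified, in $\mathrm{id}-\widehat{h}\circ\check{h}=\partial H+H\partial$, where the three operators on the right count overlapping families of paths with signs. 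This cancellation is exactly where \cite{JW} spend their Appendix~B (organized as an induction along a linear extension of the reachability order on $V$, which is where acyclicity genuinely enters). Until that bookkeeping is carried out, what you have is a correct outline of the known proof rather than a proof.
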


\paragraph{Morse matching and the Anick resolution.} Throughout this paper, $\Bbbk$ denotes any field and $\Lambda$ is an associative $\Bbbk$-algebra with unity and augmentation; i.e., a $\Bbbk$-algebra homomorphism $\varepsilon:\Lambda \to \Bbbk$. Let $X$ be a set of generators for $\Lambda$. Suppose that $\le$ is a well ordering on $X^*$, the free monoid generated by $X$. For instance, given a fixed well ordering on the letters, one may order words ``length-lexicographically'' by first ordering by length and then comparing words of the same length by checking which of them occurs earlier in the dictionary. Denote by $\Bbbk \langle X \rangle$ the free associative $\Bbbk$-algebra with unity on $X$. There is a canonical surjection $f:\Bbbk \langle X \rangle \to \Lambda$ once $X$ is chosen, in other words, we get $\Lambda \cong \Bbbk\langle X \rangle / \mathrm{ker}(f)$
\par Let $\mathrm{GSB}_\Lambda$ be a Gr\"obner--Shirshov basis for $\Lambda$. Denote by $\mathfrak{V}$ the set of the leading terms in $\mathrm{GSB}_\Lambda$
and let $\mathfrak{B}= \mathrm{Irr}(\mathrm{ker}(f))$ be the set of irreducible words (not containing the leading monomials of relations as subwords)
or $\Bbbk$-basis for $\Lambda$ (see CD-Lemma~\cite{LA,Be}). Following Anick~\cite{An}, call $\mathfrak{V}$ the set of obstructions (antichains) for $\mathfrak{B}$.
For $n \ge 1$, $\upsilon = x_{i_1} \cdots x_{i_t} \in X^*$
is an {\sl $n$-prechain} whenever there exist $a_j,b_j \in \mathbb{Z}$, $1 \le j \le n$, satisfying
\par 1. $1 = a_1 < a_2 \le b_1 < a_3 \le b_2 < \ldots < a_n \le b_{n-1}<b_n = t$ and,
\par 2. $x_{i_{a_j}} \cdots x_{i_{b_j}} \in \mathfrak{V}$ for $1 \le j \le n$.
\par An $n$-prechain $x_{i_1} \cdots x_{i_t}$ is an {\sl $n$-chain} iff the integers $\{a_j,b_j\}$ can be chosen so that
\par 3. $x_{i_1} \cdots x_{i_s}$ is not an $m$-prechain for any $s < b_m$, $1 \le m \le n$.
\par As in~\cite{An}, we say that the elements of~$X$ are $0$-chains, the elements of~$\mathfrak{V}$ are $1$-chains, and denote the set
of $n$-chains by $\mathfrak{V}^{(n)}$.

\smallskip

\par As usual, the cokernel of a $\Bbbk$-module map $\eta: \Bbbk \to \Lambda$ will be denoted as $\Lambda/\Bbbk$. For each left $\Lambda$-module $C$,
 construct the relatively free $\Lambda$-module
$$
B_n(\Lambda,C):= \Lambda \otimes_\Bbbk \underbrace{(\Lambda/\Bbbk) \otimes_\Bbbk \cdots \otimes_\Bbbk(\Lambda/\Bbbk)}_{n \mbox{ \tiny {factors} } \Lambda/\Bbbk} \otimes_\Bbbk C,
$$

\par Define right $\Lambda$-module homomorphisms $\partial_n:B_n \to B_{n-1}$ for $n > 0$ by
\begin{equation*}
\partial_n( [\lambda_1| \ldots | \lambda_n]) = \lambda_1[\lambda_2| \ldots |\lambda_n] + (-1)^n[\lambda_1| \ldots| \lambda_{n-1}]\lambda_n  +  \sum\limits_{i=1}^{n-1}(-1)^{i}   [\lambda_1| \ldots | \lambda_i\lambda_{i+1}| \ldots | \lambda_n].
\end{equation*}

\par As is well known, the chain complex $(B_\bullet(\Lambda,C), \partial_\bullet)$ is a normalized bar resolution for the left $\Lambda$-module $C$.
We assume that $C = \Bbbk$, i.e., for the $c \in \Bbbk$, $\lambda \in \Lambda$ we have $ \lambda \cdot c= \varepsilon ( \lambda c)$. Let us rewrite
the resolution $(B_\bullet(\Lambda,\Bbbk), \partial_\bullet)$ as
$$
B_0 = \Lambda, \qquad B_n  = \bigoplus\limits_{\omega_1, \ldots, \omega_n \in \mathfrak{B}_\Lambda}\Lambda[\omega_1| \ldots| \omega_n], \qquad n \ge 1
$$
with differentials
\begin{equation}\label{bd}
\partial_n( [\omega_1| \ldots | \omega_n]) =\varepsilon(\omega_1)[\omega_2| \ldots |\omega_n] + (-1)^n[\omega_1| \ldots| \omega_{n-1}] \omega_n + \sum\limits_{i=1}^{n-1}(-1)^{n-i} [\omega_1| \ldots | f( \omega_i\omega_{i+1})| \ldots | \omega_n].
\end{equation}

\smallskip

\begin{theorem}[J\"ollenbeck --- Sc\"oldberg --- Welker]\label{JSW}
For $\omega \in X^*$, let $\mathfrak{V}_{\omega,i}$ be the vertices $[\omega_1| \ldots | \omega_n]$ in $\Gamma_{B_\bullet(\Lambda,\Bbbk)}$
such that $\omega = \omega_1 \cdots \omega_n$ and $i$ is the larger integer $i \ge -1$ such that $\omega_1\cdots \omega_{i+1} \in \mathfrak{V}^i$ is an Anick $i$-chain.
Let $\mathfrak{V}_\omega = \bigcup\limits_{i \ge -1}\mathfrak{V}_{\omega,i}$.
\par Define a partial matching $\mathcal{M}_\omega$ on $(\Gamma_{B_\bullet(\Lambda,\Bbbk)})_\omega = \Gamma_{B_\bullet(\Lambda,\Bbbk)}|_{\mathfrak{V}_\omega}$
by letting $\mathcal{M}_\omega$ consist of all edges
$$
[\omega_1| \ldots| \omega'_{i+2}|\omega''_{i+2}| \ldots| \omega_n] \to [\omega_1| \ldots| \omega_{i+2}| \ldots| \omega_m]
$$
when $[\omega_1|  \ldots| \omega_m] \in \mathfrak{V}_{\omega,i}$, such that $\omega'_{i+2}\omega''_{i+2} = \omega_{i+2}$
and $[\omega_1| \ldots| \omega_{i+1}|\omega'_{i+2}] \in \mathfrak{V}^{i+1}$ is an Anick $(i+1)$-chain.

\par The set of edges $\mathcal{M} = \bigcup\limits_{\omega}\mathcal{M}_\omega$ is a Morse matching on $\Gamma_{B_\bullet(\Lambda,\Bbbk)}$,
with critical cells $X_n^\mathcal{M} = \mathfrak{V}^{n-1}$ for all $n$.

\end{theorem}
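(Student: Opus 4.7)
The plan is to establish the three axioms of an acyclic matching for $\mathcal{M}=\bigcup_\omega \mathcal{M}_\omega$ and then identify $X_n^\mathcal{M}$ with $\mathfrak{V}^{n-1}$. I would begin with the easier parts. Every matched edge preserves the underlying word $\omega_1\omega_2\cdots\omega_n\in X^*$, because the merge $\omega'_{i+2}\omega''_{i+2}=\omega_{i+2}$ is literal concatenation and lies in $\mathfrak{B}$ by hypothesis on the factors of a basis element. Consequently $\mathcal{M}_\omega\cap\mathcal{M}_{\omega'}=\varnothing$ for $\omega\neq\omega'$, and the (Matching) axiom reduces to verification inside a single $\mathcal{M}_\omega$. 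There, uniqueness of the split $\omega_{i+2}=\omega'_{i+2}\omega''_{i+2}$ is forced by condition~3 of the Anick definition: the cut position must coincide with the minimal $b_{i+2}$ for which the prefix extends to an $(i+2)$-prechain. The dual uniqueness for the consolidation direction follows from the maximality of $i$ in $\mathfrak{V}_{\omega,i}$. The (Invertibility) axiom is immediate from~(\ref{bd}): a merging summand carries coefficient $(-1)^{n-i}\in\{\pm 1\}$, a unit in the centre of $\Lambda$.

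For the critical cells I would prove the equivalence: a vertex $[\omega_1|\ldots|\omega_n]$ is $\mathcal{M}$-critical if and only if the tuple $(\omega_1,\ldots,\omega_n)$ is the canonical factorization of an Anick $(n-1)$-chain, i.e.\ $\omega_1\omega_2\cdots\omega_n\in\mathfrak{V}^{(n-1)}$ and the factor boundaries align with the forced positions $b_j$ of the chain. In that case, no adjacent pair of factors can be merged to produce a chain-compatible target with canonical split at the same position, and no position admits a refinement yielding a longer chain, so no matched edge meets the vertex. Conversely, for a non-critical vertex one reads off from conditions~1--3 the forced split or merge relative to the maximal chain-index, exhibiting an explicit matched edge through it. The boundary cases $n=0$ (giving $X_0^{\mathcal{M}}=\mathfrak{V}^{(-1)}$, the empty chain) and $n=1$ (giving $X_1^{\mathcal{M}}=X=\mathfrak{V}^{(0)}$) are checked directly.

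The main obstacle is (Acyclicity), which I plan to tackle by exhibiting a potential $\Phi$ on the vertices of $\Gamma_\mathcal{M}$ that strictly decreases along every directed edge. Three types of edges occur in $\Gamma_\mathcal{M}$: (i) reversed matched edges, which preserve $\omega$ and strictly raise the chain-index from $i$ to at least $i+1$; (ii) unmatched forward edges whose associated factor merge $\omega_j\omega_{j+1}$ is not already in $\mathfrak{B}$, so that the $\mathrm{GSB}_\Lambda$-rewriting $f(\omega_j\omega_{j+1})$ strictly shrinks the underlying word in the length--lexicographic order on $X^*$; and (iii) unmatched forward edges whose merge is literal (so $\omega$ is preserved) but whose position $j$ does not agree with the canonical $i+2$ for the maximal chain-index of the target, contributing a strict decrease in a secondary discrete invariant measured against the canonical chain structure. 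Together with the boundary summands $\varepsilon(\omega_1)[\omega_2|\ldots|\omega_n]$ and $(-1)^n[\omega_1|\ldots|\omega_{n-1}]\omega_n$, which either vanish on augmentation grounds or alter $\omega$ in a controlled way, these combine into a lexicographic triple $(\omega,\,-i,\,\text{distance from canonical position})$, furnishing a well-founded potential that strictly decreases along every edge of $\Gamma_\mathcal{M}$. The delicate point is case (iii); careful bookkeeping against conditions~1--3 of the Anick definition is required to verify that the secondary coordinate indeed decreases, and this is where I expect the bulk of the technical work to lie.
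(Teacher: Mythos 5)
First, a point of orientation: the paper does not prove Theorem~\ref{JSW} at all. It is imported, with attribution, from J\"ollenbeck--Welker \cite{JW} and Sk\"oldberg \cite{Sc}, and the text passes immediately to its consequence, Proposition~\ref{AR}. So your attempt can only be measured against the published proofs, not against anything in this paper; and against that standard it follows the right overall route (verify the three matching axioms, identify the critical cells) but leaves the hard part undone.

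The (Matching) and (Invertibility) discussion and the identification of $X_n^{\mathcal{M}}$ with $\mathfrak{V}^{n-1}$ are acceptable in outline, though for (Matching) you still owe the check that no vertex is simultaneously the \emph{source} of one matched edge and the \emph{target} of another; ``maximality of $i$'' gestures at this but does not carry it out, since a vertex of chain index $k$ with a non-canonical factorization of its chain prefix can a priori satisfy the source condition for some $i'\le k-1$ while also admitting the canonical split of its $(k+2)$nd factor. The genuine gap, however, is (Acyclicity). Your well-founded potential $(\omega,\,-i,\,d)$ has an undefined third coordinate, and you explicitly defer exactly the case --- $\omega$-preserving forward edges given by a literal merge at a non-canonical position --- where the whole difficulty of the theorem is concentrated; a plan that ends with ``careful bookkeeping is required and this is where the bulk of the work lies'' has not proved acyclicity. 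Worse, the second coordinate is already suspect before the third is consulted: along an $\omega$-preserving forward edge that merges two factors at a position $j\le i+1$, i.e.\ inside the maximal chain prefix, the concatenation of the target's first $i+1$ factors equals the concatenation of the source's first $i+2$ factors, which by uniqueness of Anick chain prefixes is in general no longer an $i$-chain; hence the chain index of the target can drop strictly below $i$, $-i$ increases, and the lexicographic comparison fails. Any correct argument must either rule out such merges as participants in a directed cycle (which is what the arguments of \cite{JW} and \cite{Sc} do, by analysing the alternating up--down structure of a putative cycle and the positions at which splits and merges can occur) or build a genuinely different invariant. As it stands, the proposal establishes the easy axioms and correctly names the obstacle, but does not overcome it.
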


\smallskip

\par From this theorem we get the following proposition (\cite[Theorem 4.4]{JW} and \cite[Theorem 4]{Sc}).
But here we assume that $\varepsilon: \Lambda \to \Bbbk$ is arbitrary.

\begin{proposition}\label{AR}
The chain complex $(A_\bullet(\Lambda),d_\bullet)$ defined by
$$
A_n(\Lambda) = \bigoplus\limits_{v \in \mathfrak{V}^{(n-1)}}\Lambda v, \qquad d_n(v) = \sum\limits_{c' \in \mathfrak{V}^{(n-2)}} \Gamma (v,v')v'
$$
where all paths from graph $\Gamma_{B_\bullet(\Lambda, \Bbbk)}^\mathcal{M}$, is the $\Lambda$-free Anick resolution for $\Bbbk$.
\end{proposition}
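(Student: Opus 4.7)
The plan is to deduce Proposition \ref{AR} directly from Theorem \ref{JSW} together with the Morse-complex theorem (Theorem 2.2). First I would recall that $(B_\bullet(\Lambda,\Bbbk),\partial_\bullet)$ is a free resolution of $\Bbbk$ as a left $\Lambda$-module (where $\Bbbk$ inherits its $\Lambda$-module structure through $\varepsilon$); this is the usual normalized bar resolution argument and requires no modification for an arbitrary augmentation. Next I would apply Theorem \ref{JSW} to obtain the acyclic Morse matching $\mathcal{M}=\bigcup_\omega \mathcal{M}_\omega$ on the graph $\Gamma_{B_\bullet(\Lambda,\Bbbk)}$, and read off that the critical vertices in homological degree $n$ are exactly the Anick $(n-1)$-chains, i.e. $X_n^{\mathcal{M}}=\mathfrak{V}^{n-1}$.

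Then I would invoke the algebraic discrete Morse theorem: the Morse complex $(B_\bullet^{\mathcal{M}},\partial_\bullet^{\mathcal{M}})$ is a complex of free $\Lambda$-modules, homotopy-equivalent to $(B_\bullet(\Lambda,\Bbbk),\partial_\bullet)$, with
$$
B_n^{\mathcal{M}}=\bigoplus_{v\in\mathfrak{V}^{(n-1)}}\Lambda v=A_n(\Lambda),
$$
and with differentials given on each critical cell $v$ by
$$
\partial_n^{\mathcal{M}}(v)=\sum_{v'\in\mathfrak{V}^{(n-2)}}\Gamma(v,v')v',
$$
which is precisely the formula $d_n(v)$ appearing in the statement. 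Homotopy-equivalence with a resolution of $\Bbbk$ forces $(A_\bullet(\Lambda),d_\bullet)$ to be itself a resolution of $\Bbbk$, and freeness is built into the definition of $A_n(\Lambda)$.

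The one delicate point, and the main thing to check, is that nothing in the construction depends on the specific shape of the augmentation $\varepsilon$: the original JW statement is written for the standard augmentation, whereas here $\varepsilon$ is arbitrary. I would verify that the edges occurring in $\mathcal{M}_\omega$ come exclusively from the middle terms $[\omega_1|\ldots|f(\omega_i\omega_{i+1})|\ldots|\omega_n]$ of the bar differential~\eqref{bd}, whose coefficients are $\pm 1$, so that the invertibility and centrality conditions are satisfied regardless of $\varepsilon$; and that the $\varepsilon$-dependent edge $\varepsilon(\omega_1)[\omega_2|\ldots|\omega_n]$ moves between distinct multihomogeneous components $\mathfrak{V}_\omega\to\mathfrak{V}_{\omega'}$ with $\omega'=\omega_2\cdots\omega_n\neq\omega$, so it cannot create new directed cycles within any $\mathcal{M}_\omega$ or between matched edges.

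The expected main obstacle is precisely this acyclicity-and-independence verification: one needs to confirm that the proof of Theorem \ref{JSW}'s acyclicity statement, originally formulated for the standard augmentation, transports verbatim to the general case because the matched edges are never of $\varepsilon$-type. Once this observation is in place, the proposition follows immediately by specializing Theorem 2.2 to $(C_\bullet,\partial_\bullet)=(B_\bullet(\Lambda,\Bbbk),\partial_\bullet)$ with the matching $\mathcal{M}$ and identifying $X_n^{\mathcal{M}}$ with $\mathfrak{V}^{(n-1)}$.
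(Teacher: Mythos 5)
Your proposal is correct and follows essentially the same route as the paper, which derives Proposition \ref{AR} immediately from Theorem \ref{JSW} together with the Morse-complex theorem (citing \cite[Theorem 4.4]{JW} and \cite[Theorem 4]{Sc}) without writing out further details. Your additional verification that the matched edges arise only from the middle terms of~(\ref{bd}) with coefficients $\pm 1$, so that the construction is insensitive to the choice of augmentation, is exactly the point the paper flags with ``here we assume that $\varepsilon$ is arbitrary'' and is a welcome elaboration rather than a deviation.
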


\smallskip

\par Let us demonstrate how the Morse matching machinery work.

\begin{example}
Let us consider the following algebra $\Lambda = \Bbbk \langle x,y \rangle/(x^2 - y^2)$. We set $x>y$, then we have
$$
xxx \xrightarrow{(-(=))} x(xx) \to xy^2
$$
and
$$
xxx \xrightarrow{((-)=)} (xx)x \to y^2x
$$
i.e., we have to add the relation $xy^2 = y^2x$, using Buchberger~--- Shirshov's algorithm we get
$$
 xxy^2 \xrightarrow{(-(=))}x(xy^2) \to x(y^2x) \to (xy^2)x \to (y^2x)x \to y^2xx \to y^2y^2 \to y^4
$$
in other hand
$$
xxy^2 \xrightarrow{((-)=)} (xx)y^2 \to y^2y^2 \to y^4
$$

\smallskip

\par Thus we get $\mathrm{GSB}_\Lambda = \{x^2 - y^2, xy^2 - y^2x\}$ then we have,
$$
\mathfrak{V} = \left\{ x^2,xy^2 \right\}, \quad  \mathfrak{V}^{(2)} = \left\{\lefteqn{\underbracket{\phantom{xx}}}x\overbracket{xx}, \lefteqn{\overbracket{\phantom{xx}}}x\underbracket{xy^2} \right\}, \quad \mathfrak{V}^{(3)} = \left\{ \lefteqn{\lefteqn{\underbracket{\phantom{xx}}}x\overbracket{xx}}\phantom{xx}\underbracket{\phantom{x}x},  \lefteqn{\lefteqn{\underbracket{\phantom{xx}}}x\overbracket{xx}}\phantom{xx}\underbracket{\phantom{x}y^2}  \right\}, \ldots,
$$
i.e., $\mathfrak{V}^{(\ell)}\Bbbk = \mathrm{Span}_\Bbbk (x^{\ell+1},x^\ell y^2)$, $\ell \ge 0$. We will use the bar notations, i.e., we will denote the elements of the set $\mathfrak{V}^{(\ell)}$ as  $[\underbrace{x|\ldots|x}_{\ell+1}]$ and $[\underbrace{x|\ldots|x}_{\ell}|y^2]$, $\ell \ge 0$. Thus we have the following (exact) chain complex
$$
\ldots \to \Lambda \otimes_\Bbbk \mathfrak{V}^{(\ell)}\Bbbk \xrightarrow{d_\ell} \Lambda \otimes_\Bbbk \mathfrak{V}^{(\ell-1)}\Bbbk \xrightarrow{d_{\ell-1}} \ldots \xrightarrow{d_2} \Lambda \otimes_\Bbbk \mathfrak{V}\Bbbk \xrightarrow{d_1} \Lambda \otimes_\Bbbk \mathrm{Span}_\Bbbk(x,y) \xrightarrow{d_0} \Lambda \xrightarrow{\varepsilon} \Bbbk \to 0.
$$

\smallskip

\par Let us define all differentials via Morse matching machinery. We have to consider the following directed weighted graphs (see fig.\ref{e1}, fig.\ref{e2} and fig.\ref{e3}).
\begin{figure}[h!]
$$
 \xymatrix{
 [x] & [x|x] \ar@{->}[r]^{\varepsilon(x)} \ar@{->}[l]_{x} \ar@{->}[d]_{-1} & [x] \\
 & [y^2] \ar@{.>}[d]_{+1} &\\
 [y] & [y|y] \ar@/_/[u]_{-1} \ar@{->}[r]^{\varepsilon(y)} \ar@{->}[l]_{y} &[y]
 }\qquad
 \xymatrix{
& [y^2] \ar@{.>}[d]_{+1} & [x|y^2] \ar@{->}[d]_{-1} \ar@{->}[r]^{\varepsilon(y^2)} \ar@{->}[l]_{x} & [x]\\
& [y|y] \ar@/_/[u]_{-1} \ar@{->}[ld]_y \ar@{->}[d]^{\varepsilon(y)} & [y^2x] \ar@{.>}[r]_{+1} & [y|yx] \ar@/_/[l]_{-1} \ar@{->}[dl]^{y} \ar@{->}[d]^{\varepsilon(yx)} \\
[y] & [y] & [yx] \ar@{.>}[d]_{+1}  & [y]\\
&[x]& [y|x] \ar@/_/[u]_{-1} \ar@{->}[r]^{\varepsilon(x)} \ar@{->}[l]_{y} &[y]&
 }
$$
\caption{Here is shown the Morse matching, the correspondence edges are shown as dots arrows.}\label{e1}
\end{figure}
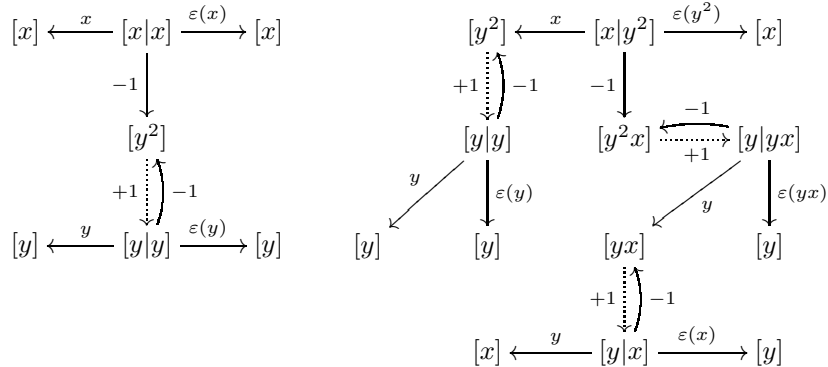

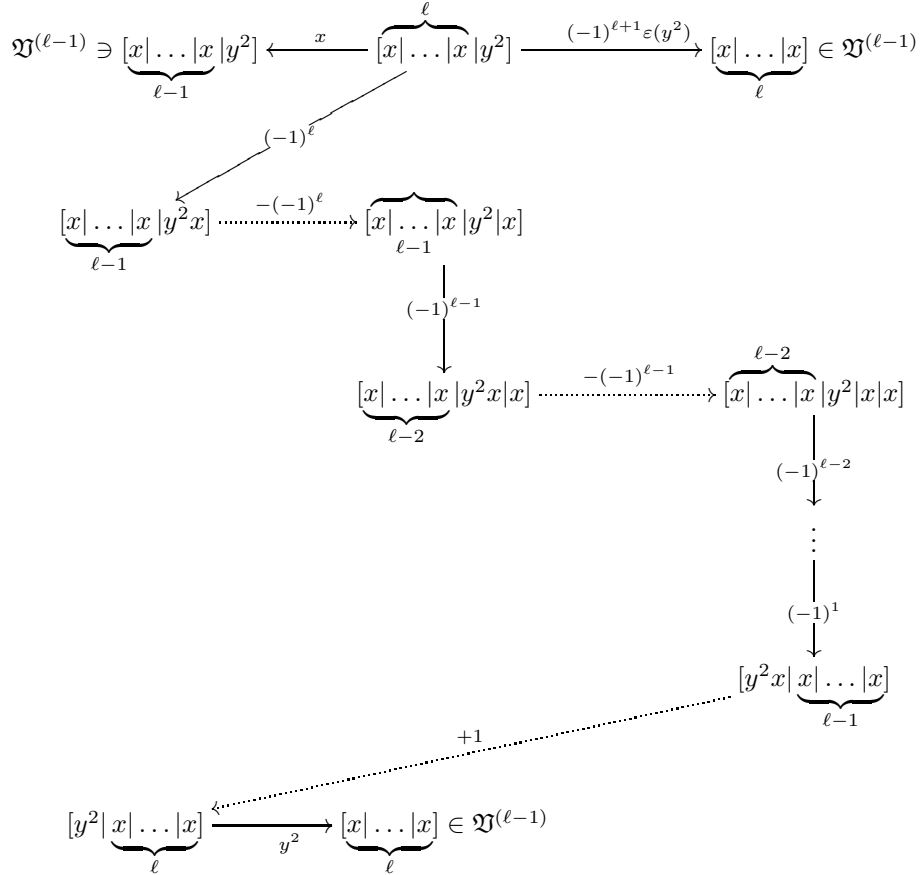
\begin{figure}[b!]
$$
  \xymatrix{
 {\mathfrak{V}^{(\ell-1)}} \ni[\underbrace{x|\ldots|x}_{\ell-1}|y^2] & [\overbrace{x|\ldots|x}^{\ell}|y^2] \ar@{->}[l]_(.4){x} \ar@{->}[rr]^{(-1)^{\ell+1}\varepsilon(y^2)} \ar@{->}[ld]|{(-1)^{\ell}} && [\underbrace{x|\ldots|x}_{\ell}]  \in {\mathfrak{V}^{(\ell-1)}} \\
  [\underbrace{x|\ldots|x}_{\ell-1}|y^2x] \ar@{.>}[r]^{-(-1)^{\ell}}& [\overbrace{x|\ldots|x}_{\ell-1}|y^2|x] \ar@{->}[d]|{(-1)^{\ell-1}} &&\\
  & [\underbrace{x|\ldots|x}_{\ell-2}|y^2x|x] \ar@{.>}[rr]^{-(-1)^{\ell-1}}&& [\overbrace{x|\ldots|x}^{\ell-2}|y^2|x|x] \ar@{->}[d]|{(-1)^{\ell-2}}\\
  &&& \vdots \ar@{->}[d]|{(-1)^1}\\
  &&& [y^2x|\underbrace{x|\ldots|x}_{\ell-1}] \ar@{.>}[llld]_{+1}\\
  [y^2|\underbrace{x|\ldots|x}_{\ell}] \ar@{->}[r]_{y^2} & [\underbrace{x|\ldots|x}_{\ell}] \in  {\mathfrak{V}^{(\ell-1)}} &&
    }
$$
\caption{As before the dots arrows mean the edges from Morse matching.} \label{e2}
\end{figure}
\begin{figure}
$$
  \xymatrix{
 {\mathfrak{V}^{(\ell-1)}} \ni [\underbrace{x|\ldots|x}_{\ell}]  & [\overbrace{x|\ldots|x}^{\ell+1}] \ar@{->}[rr]^{(-1)^{\ell+1}\varepsilon(x)}  \ar@{->}[d]|{(-1)^{i}} \ar@{->}[rd]|{(-1)^{\ell-1}} \ar@{->}[rrd]|{(-1)^{\ell}} \ar@{->}[l]_(.4){x} \ar@{->}[ld]|{(-1)^1} && [\underbrace{x|\ldots|x}_{\ell}] \in {\mathfrak{V}^{(\ell-1)}}\\
   [y^2|\underbrace{x|\ldots|x}_{\ell-1}] &[\underbrace{x|\ldots|x}_{\ell-i}|y^2|\underbrace{x|\ldots|x}_{i-1}]& [\underbrace{x|\ldots|x}_{\ell-2}|y^2|x] & [\underbrace{x|\ldots|x}_{\ell-1}|y^2] \in {\mathfrak{V}^{(\ell-1)}}
}
$$
\caption{From the previous figure follows that there is only one $(\ell-1)$th Anick's chain in bottom level.}\label{e3}
\end{figure}
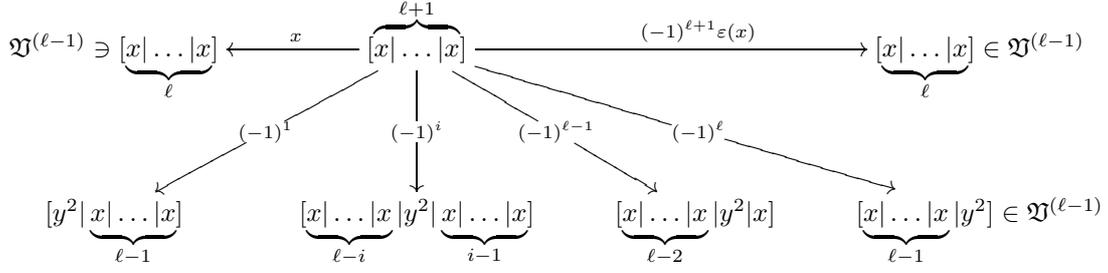

\smallskip

\par Thus we get
$$
d_1[x|x] = x[x] + \varepsilon(x)[x] - y[y] - \varepsilon(y)[y],
$$

$$
d_1[x|y^2] = \varepsilon(y^2)[x]+xy[y]+\varepsilon(y)x[y] - \varepsilon(yx)[y] - y^2[x] - \varepsilon(x)y[y],
$$

$$
d_\ell[\underbrace{x|\ldots|x}_{\ell}|y^2] = x[\underbrace{x|\ldots|x}_{\ell-1}|y^2] + (-1)^{\ell+1}\varepsilon(y^2)[\underbrace{x|\ldots|x}_{\ell}] + (-1)^\ell y[\underbrace{x|\ldots|x}_{\ell}]
$$

$$
d_\ell[\underbrace{x|\ldots|x}_{\ell+1}] = x[\underbrace{x|\ldots|x}_{\ell}] + (-1)^{\ell+1} \varepsilon(x)[\underbrace{x|\ldots|x}_{\ell}] + (-1)^\ell y^2[\underbrace{x|\ldots|x}_{\ell-1}].
$$
here $\ell >1$.
\end{example}

\smallskip 

\begin{remark}
The same algebra $\Lambda = \Bbbk \langle x,y \rangle/(x^2 - y^2)$ with nice Gr\"obner~--- Shirshov bases was considered in \cite{Ufn}. There is a small caveat; David J. Anick has developed his technique for right modules. We consider Anick's resolution for left modules. 
\end{remark}

\paragraph{Hochschild (co)homology via the Anick resolution.} Keeping the notation from the previous paragraph. As usual we denote by $\Lambda^e :=\Lambda \otimes_\Bbbk \Lambda^{\mathrm{op}}$ the enveloping algebra for algebra $\Lambda$. Follow \cite{JW}, \cite{Sc} we shall see how to construct a free $\Lambda^e$-resolution for $\Lambda$ as a (left) right module.

\par Here we consider the two-sided bar resolution $B_\bullet(\Lambda, \Lambda)$ which is an $\Lambda^e$-free resolution of $\Lambda$ where
$$
B_n(\Lambda, \Lambda):=\Lambda \otimes_\Bbbk(\Lambda/\Bbbk)^{\otimes n} \otimes_\Bbbk \Lambda \cong \Lambda^e \otimes_\Bbbk (\Lambda/\Bbbk)^{\otimes n}.
$$

\par The differential is defined as before:
\begin{equation}\label{Hd}
\partial_n([\lambda_1| \ldots | \lambda_n]) =(\lambda_1 \otimes 1) [\lambda_2| \ldots |\lambda_n] +  \sum\limits_{i=1}^{n-1}(-1)^{i}   [\lambda_1| \ldots | \lambda_i\lambda_{i+1}| \ldots | \lambda_n] + (-1)^n(1 \otimes \lambda_n)[\lambda_1| \ldots| \lambda_{n-1}] .
\end{equation}

\smallskip

\par Let us consider the same matching $\mathcal{M} = \bigcup\limits_{\omega}\mathcal{M}_\omega$ as before, and get
\begin{proposition}\cite[Chapter 5]{JW}, \cite[Lemma 9 and Theorem 5]{Sc}\label{HAR}
The set of edges $\mathcal{M} = \bigcup\limits_{\omega}\mathcal{M}_\omega$ is a Morse matching on $\Gamma_{B_\bullet(\Lambda, \Lambda)}$, with Anick chains as critical points. Moreover, the complex $(HA_\bullet(\Lambda),A\partial_\bullet)$ which is defined as follows:
$$
HA_{n+1}(\Lambda) = \Lambda^e \otimes \mathfrak{V}^{(n)}\Bbbk, \qquad A\partial_{n+1}(v) = \sum\limits_{v' \in \mathfrak{V}^{(n)}}\Gamma(v,v')v'.
$$
is a free $\Lambda^e$ resolution of $\Lambda$.
\end{proposition}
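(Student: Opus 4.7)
The plan is to mirror the proof of Proposition \ref{AR} almost verbatim, treating the two-sided bar complex $B_\bullet(\Lambda, \Lambda)$ as a ``collared'' version of the one-sided bar. Writing $B_n(\Lambda, \Lambda) \cong \Lambda^e \otimes_\Bbbk (\Lambda/\Bbbk)^{\otimes n}$, the matching $\mathcal{M}_\omega$ of Theorem \ref{JSW} is defined purely on the interior tensor slots $(\Lambda/\Bbbk)^{\otimes n}$: it splits an entry $\omega_{i+2} = \omega'_{i+2}\omega''_{i+2}$ that prolongs a given Anick $i$-chain to an $(i+1)$-chain. Since this construction does not touch the outer $\Lambda^e$-factor, the matching transfers verbatim to the graph $\Gamma_{B_\bullet(\Lambda, \Lambda)}$, and the set of critical cells is again indexed by the Anick chains $\mathfrak{V}^{(n)}$.

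Next I would re-verify the three matching axioms in this enlarged setting. The Matching axiom is immediate because the rule still selects a canonical splitting site from the interior word only. Invertibility is unchanged, since the weights $\pm 1$ appearing in each matched edge already lie in $\mathrm{Z}(\Bbbk) \subseteq \mathrm{Z}(\Lambda^e)$. Acyclicity is the essential point: the inverse edges $\mathcal{M}^{-1}$ strictly increase the length $i+2$ of the longest initial Anick prechain in the bar word, while the remaining edges coming from the three summands of (\ref{Hd}) -- involving $\lambda_1\otimes 1$, the multiplications $\lambda_i\lambda_{i+1}$, and $1\otimes\lambda_n$ -- can never strictly restore that length, which reproduces the argument behind Theorem \ref{JSW}. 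The extra right-$\Lambda$ summand $(-1)^n(1 \otimes \lambda_n)[\lambda_1|\ldots|\lambda_{n-1}]$ plays the same role here that the augmentation term $\varepsilon(\omega_1)[\omega_2|\ldots|\omega_n]$ played in the one-sided case.

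With acyclicity in place, Theorem~1.1 furnishes a homotopy equivalence between $B_\bullet(\Lambda,\Lambda)$ and its Morse complex, whose critical modules are exactly $\Lambda^e \otimes \mathfrak{V}^{(n-1)}\Bbbk$ and whose differentials are the weighted path sums $\Gamma(v,v')$; since the two-sided bar is a free $\Lambda^e$-resolution of $\Lambda$, so is its homotopy-equivalent Morse complex $(HA_\bullet(\Lambda),A\partial_\bullet)$. The hard part will be the bookkeeping for acyclicity and for path-weight finiteness: in the one-sided version every path in $\Gamma_\mathcal{M}$ eventually terminates because the right-most bar slot is annihilated by augmentation, whereas here it is replaced by the action of $1\otimes\lambda_n$, producing an element of $\Lambda^e$ rather than a scalar. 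One must check that this substitution still yields a well-founded reduction in the graph and that the path-weight sums $\Gamma(v,v')$ remain finite -- this is precisely the adaptation made in \cite[Chapter 5]{JW} and \cite[Lemma 9]{Sc}.
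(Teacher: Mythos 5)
The paper gives no proof of Proposition \ref{HAR} at all --- it is simply quoted from \cite[Chapter 5]{JW} and \cite{Sc} --- and your sketch correctly reconstructs the argument those sources use: the matching of Theorem \ref{JSW} lives entirely on the interior slots $(\Lambda/\Bbbk)^{\otimes n}$ and is insensitive to the outer $\Lambda^e$-factor, so it transfers to $\Gamma_{B_\bullet(\Lambda,\Lambda)}$ with the same critical cells $\mathfrak{V}^{(n)}$, the extra word-shortening edges cannot create cycles because they strictly decrease the underlying word and so leave each component $\mathfrak{V}_\omega$ permanently, and the Morse complex of the exact two-sided bar resolution is then the stated free $\Lambda^e$-resolution of $\Lambda$. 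The only slip is cosmetic: with the paper's conventions it is the left outer term $\varepsilon(\omega_1)[\omega_2|\ldots|\omega_n]$ of (\ref{bd}) that gets promoted to $(\lambda_1\otimes 1)[\lambda_2|\ldots|\lambda_n]$ in (\ref{Hd}), not the right-hand one, but this orientation issue has no bearing on the matching, the acyclicity, or the conclusion.
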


\smallskip

\par The $\Lambda^e$-resolution defined above will also be denoted by $A_\bullet(\Lambda)$. It will always be clear
from the context what kind of resolution is being considered.
\paragraph{Multiplication in Cohomology via Gr\"obner --- Shirshov basis.} Let us consider the cohomological  multiplication of associative algebra $\Lambda$ via Gr\"obner --- Shirshov basis $\mathrm{GSB}_\Lambda$. From \cite[\S 7, Chapter IX]{CE} we know that first of all we need a map
$$
g_\bullet: B_\bullet(\Lambda \otimes \Lambda) \to B_\bullet(\Lambda) \otimes B_\bullet(\Lambda)
$$
which is given by the formula
\begin{equation}
g_n[\lambda_1\otimes \lambda_1'| \ldots| \lambda_n\otimes \lambda_n'] = \sum\limits_{0 \le p \le n} [\lambda_1| \ldots| \lambda_p]\lambda_{p+1}\cdots\lambda_n \otimes \lambda_1' \cdots\lambda_p'[\lambda_{p+1}'|\ldots|\lambda_n'].
\end{equation}

\par Let us rewrite this formulae in the following way:
\begin{equation}\label{g}
g_n[\lambda\otimes \lambda']_n = \sum\limits_{0 \le p \le n} [\lambda]_{1,p}(\lambda')_{p+1,n} \otimes (\lambda)_{1,p}[\lambda']_{p+1,n},
\end{equation}
here $[\lambda \otimes \lambda']_n = [\lambda_1\otimes \lambda_1'| \ldots| \lambda_n\otimes \lambda_n'],$ $[\lambda]_{i,j}: = [\lambda_i| \ldots|\lambda_j]$, $(\lambda)_{i,j} = \lambda_i \cdots \lambda_j$, for $i \le j$ and we put that $[\lambda]_{i,j} = [\lambda]_{n+1} = [],$ $(\lambda)_{i,j} = ()$ if $i>j$. Let us consider the following diagram
$$
\xymatrix{
B_\bullet(\Lambda) \ar@{->}[r]^(.4){g_\bullet} & B_\bullet(\Lambda) \otimes B_\bullet(\Lambda) \ar@{->}[d]^{\check{h}_\bullet \otimes \check{h}_\bullet} \\
A_\bullet (\Lambda) \ar@{->}[u]^{\widehat{h}_\bullet} \ar@{->}[r]_(.4){Ag_\bullet} & A_\bullet(\Lambda) \otimes A_\bullet(\Lambda)
}
$$
from Lemma \ref{homotlemma} follows that this diagram is commutative. Then using (\ref{g}), (\ref{h>}), (\ref{h<}) we get
\begin{multline}\label{Ag}
Ag_n[\nu \otimes \nu']_n = \\ = \sum_{\substack{0 \le p \le n \\ [\lambda \otimes \lambda']_n \in (\Lambda \otimes \Lambda)^{\otimes n+1} \\ [v]_p \in \mathfrak{V}^{(p-1)}, \\ [u]_{n-p} \in \mathfrak{V}^{n-p-1}}}\Gamma([\nu \otimes \nu'],[\lambda \otimes \lambda']) \Gamma([\lambda]_{1,p},[v]_p)[v]_p(\lambda')_{p+1,n} \otimes (\lambda)_{1,p}\Gamma([\lambda']_{p+1,n},[u]_{n-p})[u]_{n-p}
\end{multline}

\par Suppose now we have a Hopf algebra $H = (H,\Delta_H,\nabla_H,\varepsilon,\eta)$ with comultiplication $\Delta_H(x) = x \otimes x$ and assume we know Gr\"obner --- Shirshov basis $\mathrm{GSB}_H$ for algebra $(H,\nabla_H,\eta)$, then for some left $H$-module $M$, we get a following commutative diagram
$$
\xymatrix{
\mathrm{Hom}_H(A_p(H), M) \otimes \mathrm{Hom}_H(A_q(H), M) \ar@{->}[r]^(.47){\bigvee}  \ar@{->}[rd]_{\smile} & \mathrm{Hom}_{H\otimes H}\left(\bigoplus\limits_{r+s = p+q}A_r(H) \otimes A_s(H), M \otimes M\right) \ar@{->}[d]^{\Delta^*_H}\\
& \mathrm{Hom}_H(A_{p+q}(H), M)
}
$$
where $\bigvee$-product \cite[\S 7, Chapter IX]{CE} is given by the following formulae,
$$
(\vartheta \bigvee \vartheta')(c \otimes c') : = \vartheta(c) \otimes \vartheta'(c'),
$$
then using (\ref{Ag}) we can describe $\smile$-multiplication by the following formulae
\begin{multline}\label{m}
(\vartheta_p \smile \vartheta_q)([v]_{p+q}) = \\ \sum_{\substack{[\lambda \otimes \lambda']_n \in (\Lambda \otimes \Lambda)^{\otimes n+1} \\ [v]_p \in \mathfrak{V}^{(p-1)}, \\ [u]_{q} \in \mathfrak{V}^{q-1}}}\Gamma([\nu \otimes \nu'],[\lambda \otimes \lambda']) \Gamma([\lambda]_{1,p},[v]_p)\vartheta_p \left([v]_p \right)(\lambda')_{p+1,p+q}(\lambda)_{1,p}\Gamma([\lambda']_{p+1,q},[u]_{q})\vartheta_q\left([u]_{q}\right)
\end{multline}

\begin{remark}
Since the comultiplication $\Delta(x) = x \otimes x$ is cocommutative, then $\vartheta_p \smile \vartheta_q = (-1)^{pq}\vartheta_q \smile \vartheta_p$, it can allow to simplify the (\ref{m}).
\end{remark}

\section{The Plactic Monoid with Column Generators}

In this section, we present an elegant algorithm proposed by C. Schensted. We will also see that there is some connection between Schensted's column algorithm and braids.

\begin{definition}
Let $A = \{1,2, \ldots, n\}$ with $1 < 2 < \cdots < n$. Then we call $\mathrm{Pl}(A) : = A^*/ \equiv$ the plactic monoid on the alphabet set $A$, where $A^*$ is the free monoid generated by $A$, $\equiv$ is the congruence of $A^*$ generated by Knuth relations $\Omega$ and $\Omega$ consists of
$$
ikj = kij \, (i \le j <k), \qquad jki = jik \,(i<j \le k).
$$
\end{definition}

\smallskip

\par For a field $\Bbbk$, denote by $\Bbbk \mathrm{Pl}(A)$ or by $\Bbbk \mathrm{Pl}_n$ the plactic monoid algebra over $\Bbbk$ of $\mathrm{Pl}(A)$.

\begin{definition}
A strictly decreasing word $w \in A^*$ is called a column~\cite{Cain}. Denote the set of columns by $\mathrm{I}$. Let $a \in \mathrm{I}$ be a column and $a_i$ the number of the letter $i$ in $a$. Then $a_i \in \{0,1\}$, $i =\{1,2, \ldots,n\}$. Put $a = (a_1; \ldots;a_n)$. Also we will consider any column as an ordered set $\{a\}:= \{a_{i_1}, \ldots, a_{i_\ell}\}$, here $\{a_{i_j}\} = \varnothing$ iff $a_{i_j} = 0$ and we will denote it by $a = e_{i_1, \ldots, i_\ell}$. Denote by $e_\varnothing$ the empty column (the unity of the plactic monoid). Also by $e_i$ we denote the column $(0;\ldots;0;1;0;\ldots;0)$ where $1$ is in $i$th place.
\end{definition}

\par For example, the word $a = 875421$ is a column, and we have $a = (1;1;0;1;1;0;1;1;0;\ldots;0)$, $\{a\} = \{a_{i_1}, a_{i_2},a_{i_3},a_{i_4},a_{i_5},a_{i_6}\}$.

\smallskip

\begin{definition}[Schensted's column algorithm]
Let $a \in \mathrm{I}$ be a column and let $x \in A$.
$$
x \cdot a = \begin{cases}xa, \, \mbox{if $xa$ is a column;} \\ a'\cdot y, \,  \mbox{otherwise}  \end{cases}
$$
where $y$ is the rightmost letter in $a$ and is larger than or equal to $x$, and $a'$ is obtained from $a$ by replacing $y$ with $x$. We say that an element $y$ is connected to $x$ or simply that  elements $y$, $x$ are connected. And we will use the notation
$$
 x \rightleftarrows y :=\begin{cases} 1, \mbox{ iff $x$ is connected to $y$,} \\ 0, \mbox{ otherwise.}  \end{cases}
$$
\end{definition}

\begin{definition}\label{v}
Consider two columns $a,b \in \mathrm{I}$ as ordered sets $\{a\}$, $\{b\}$ and consider the columns
$$
\{b^a\} : = \{x \in \{b\}: (y \rightleftarrows x) = 0 \mbox{ for any } y \in \{a\} \},
$$
$$
\{b_a\}: = \{x \in \{b\}: (y \rightleftarrows x) = 1 \mbox{ for some } y \in \{a\} \}.
$$
\par Introduce binary operations $\vee, \wedge: I \times I \to I$ by the formulas:
$$
\{a \vee b\} := \{a\} \cup \{b^a\}, \qquad \{a \wedge b\}:=\{b_a\},
$$
then from Schensted's column algorithm it follows that $a\cdot b = (a \vee b) \cdot (a \wedge b)$.
\end{definition}

\begin{figure}[h!]
\begin{picture}(110,110)
\put(70,0){
\put(100,0){
\put(0,0){\line(0,1){100}}
\put(40,0){\line(0,1){100}}
\put(0,20){\circle*{2}}
\put(-15,20){$a_{i_3}$}
\put(40,10){\circle*{2}}
\put(43,5){$b_{j_6}$}
\put(0,20){\line(4,-1){41}}
\put(0,90){\circle*{2}}
\put(-15,90){$a_{i_1}$}
\put(40,97){\circle*{2}}
\put(43,92){$b_{j_1}$}
\put(0,90){\line(3,-2){40}}
\put(40,63.33){\circle*{2}}
\put(43,64){$b_{j_2}$}
\put(0,55){\circle*{2}}
\put(-15,55){$a_{i_2}$}
\put(40,55){\circle*{2}}
\put(43,50){$b_{j_3}$}
\put(0,55){\line(1,0){40}}
\put(40,30){\circle*{2}}
\put(43,33){$b_{j_4}$}
\put(40,26){\circle*{2}}
\put(43,21){$b_{j_5}$}
\put(0,5){\circle*{2}}
\put(-15,2){$a_{i_4}$}
}
\put(200,0){
\put(0,0){\line(0,1){100}}
\put(40,0){\line(0,1){100}}
\put(0,20){\circle*{2}}
\put(-15,55){$a_{i_2}$}
\put(43,50){$b_{j_3}$}
\put(-15,20){$a_{i_3}$}
\put(40,10){\circle*{2}}
\put(43,5){$b_{j_6}$}
\put(0,20){\line(4,-1){41}}
\put(0,90){\circle*{2}}
\put(-15,85){$a_{i_1}$}
\put(0,97){\circle*{2}}
\put(-15,97){$b_{j_1}$}
\put(0,90){\line(3,-2){40}}
\put(40,63.33){\circle*{2}}
\put(43,63.33){$b_{j_2}$}
\put(0,55){\circle*{2}}
\put(40,55){\circle*{2}}
\put(0,55){\line(1,0){40}}
\put(0,30){\circle*{2}}
\put(-15,30){$b_{j_4}$}
\put(0,26){\circle*{2}}
\put(2,24){$b_{j_5}$}
\put(0,5){\circle*{2}}
\put(-15,2){$a_{i_4}$}
}
}
\end{picture}
\caption{Here is shown $a\cdot b = (a \vee b) \cdot (a \wedge b)$.}\label{ris1}
\end{figure}
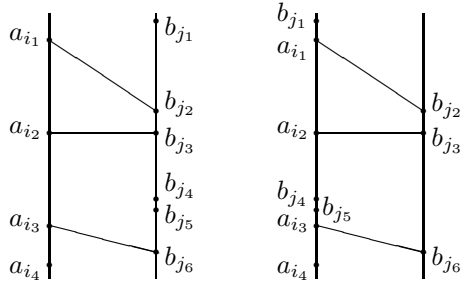

\begin{example}
Consider the following columns as ordered sets (see fig.\ref{ris1}): $\{a\} = \{a_{i_1},a_{i_2},a_{i_3},a_{i_4}\}$ and $\{b\} = \{b_{j_1},b_{j_2},b_{j_3},b_{j_4},b_{j_5},b_{j_6}\}$, we also have $a = e_{i_1, i_2,i_3,i_4}$ and $b = e_{j_1,j_2,j_3,j_4,j_5,j_6}$. We get
$$
(a_{i_1}\rightleftarrows b_{j_2}) = 1, \quad (a_{i_2}\rightleftarrows b_{j_3}) = 1, \quad (a_{i_3}\rightleftarrows b_{j_6}) = 1,
$$
then
$$
\{a \vee b\} = \{b_{j_1}, a_{i_1}, a_{i_2}, b_{j_4}, b_{j_5}, a_{i_3}, a_{i_4}\}, \qquad \{a \wedge b\} = \{ b_{j_2}, b_{j_3}, b_{j_6}\}.
$$

\end{example}

\smallskip

\begin{theorem}\label{knot}
The triple $(\mathrm{I},\vee, \wedge)$ with binary operations $\vee$ and $\wedge$ satisfies the following equations for any columns $a,b,c \in \mathrm{I}$:

\begin{equation}
a \vee b = a \vee c \mbox{ and } a \wedge b = a \wedge c \mbox{ imply } b = c,
\end{equation}

\begin{equation}
a \vee d = b \vee d \mbox{ and } a \wedge d = b \wedge d \mbox{ imply } a = b,
\end{equation}

\begin{equation}
a \vee b = a \mbox{ iff } a \wedge b = b \mbox{ and } a \wedge b = a \mbox{ iff } a \vee b = b,
\end{equation}

\begin{equation}
a \vee a = a, \quad a \wedge a = a,
\end{equation}

\begin{equation}
a \vee ( a \wedge b) = a = a \wedge (a \vee b), \quad (a \wedge b) \vee b = b = (a \vee b) \wedge b,
\end{equation}

\begin{equation}\label{black}
(a \vee b) \vee ((a \wedge b) \vee c) = a \vee (b \vee c),
\end{equation}

\begin{equation}\label{red}
(a \vee b) \wedge ((a \wedge b) \vee c) = (a \wedge (b \vee c)) \vee (b \wedge c),
\end{equation}

\begin{equation}\label{blue}
(a \wedge (b \vee c)) \wedge (b \wedge c) = (a \wedge b) \wedge c,
\end{equation}

\begin{equation}\label{comm}
a \vee b = b \Longleftrightarrow a \wedge b = a.
\end{equation}

\end{theorem}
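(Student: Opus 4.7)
The plan is to argue entirely at the level of subsets of the alphabet $A$, using the characterization $\{a\vee b\}=\{a\}\cup\{b^a\}$ and $\{a\wedge b\}=\{b_a\}$ from Definition \ref{v}. Two elementary facts underpin every verification: the bipartition $\{b\}=\{b^a\}\sqcup\{b_a\}$ is disjoint by construction of the pairing, and column insertion preserves the total number of letters, which forces $\{a\}\cap\{b^a\}=\varnothing$ and hence $\{a\vee b\}=\{a\}\sqcup\{b^a\}$ as well.

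The first block of identities follows by short set-theoretic bookkeeping. For (1.7), given $a$ one recovers $\{b\}=(\{a\vee b\}\setminus\{a\})\cup\{a\wedge b\}$, and (1.8) is its right-handed analogue. The biconditionals in (1.9) and (1.15) both reduce to $\{b^a\}=\varnothing\iff\{b\}=\{b_a\}$. Idempotency (1.10) uses the reflexive instance $x\rightleftarrows x$. The absorption laws (1.11) encode that iterating the connection relation against $a$ stabilises in one step: inserting $a$ into $a\wedge b$ bumps every letter of $a\wedge b$, while $a\vee b$ inserted into $b$ leaves every letter of $b$ in place, because the pairing is already saturated on the $b_a$-part and trivial on the $b^a$-part.

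The heart of the theorem is the associativity-type triple (1.12)--(1.14). My approach is to deduce all three at once from associativity in the plactic monoid $\mathrm{Pl}(A)$. Iterating the fundamental relation $x\cdot y=(x\vee y)(x\wedge y)$ expresses
\[
(a\cdot b)\cdot c=\bigl((a\vee b)\vee((a\wedge b)\vee c)\bigr)\cdot\bigl((a\vee b)\wedge((a\wedge b)\vee c)\bigr)\cdot\bigl((a\wedge b)\wedge c\bigr)
\]
and the analogous three-column decomposition for $a\cdot(b\cdot c)$, whose $i$-th columns are exactly the right-hand sides of (1.12), (1.13), (1.14) for $i=1,2,3$. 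Uniqueness of the Schensted--Knuth normal form then forces columnwise agreement. The main technical obstacle is to verify that both iterated decompositions are already in tableau (semistandard) form so that the uniqueness theorem applies; this amounts to a ``no further bumping'' condition on adjacent columns that follows from the absorption laws (1.11) established above. As a sanity check, (1.12) admits a direct elementwise verification: both sides consist of those $x\in A$ with $x\in\{a\}$, or $x\in\{b^a\}$, or $x\in\{c\}$ and $x\not\rightleftarrows$ any element of $\{a\}\cup\{b\}$, using the bipartition of $\{b\}$ to split the condition ``$x\not\rightleftarrows$ any element of $\{b\}$'' into conjuncts over $\{b^a\}$ and $\{b_a\}$; (1.13) and (1.14) proceed analogously.
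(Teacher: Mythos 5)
Your handling of the first block of identities (the two cancellation laws, the equivalences, idempotency, and the absorption laws) is sound and is essentially the paper's own argument: everything reduces to the disjoint decompositions $\{b\}=\{b^a\}\sqcup\{b_a\}$ and $\{a\vee b\}=\{a\}\sqcup\{b^a\}$. The difficulty lies in the central triple (\ref{black})--(\ref{blue}), where your main route has a concrete gap that borders on circularity. To invoke uniqueness of the Schensted--Knuth normal form you must first know that both three-column words
$$
\bigl((a\vee b)\vee((a\wedge b)\vee c)\bigr)\cdot\bigl((a\vee b)\wedge((a\wedge b)\vee c)\bigr)\cdot\bigl((a\wedge b)\wedge c\bigr)
$$
and its right-bracketed counterpart are irreducible, i.e.\ that each adjacent pair $(c_i,c_{i+1})$ satisfies $c_i\wedge c_{i+1}=c_{i+1}$. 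You claim this ``no further bumping'' condition follows from the absorption laws, but it does not: those laws give $x\vee(x\wedge y)=x$ and $(x\vee y)\wedge y=y$, whereas the pair $(c_1,c_2)$ requires $(x\vee y)\wedge(x\wedge y)=x\wedge y$, and the pair $(c_2,c_3)$ requires that $\bigl((a\vee b)\wedge((a\wedge b)\vee c)\bigr)\cdot\bigl((a\wedge b)\wedge c\bigr)$ not be a leading term. The latter statement is exactly the paper's Lemma \ref{twoholes}, and the paper proves that lemma \emph{using} identity (\ref{red}) --- the identity you are trying to derive. You can escape the circle by importing, as an external classical fact, that iterated column insertion always outputs a semistandard tableau, together with Knuth's uniqueness theorem; that makes the argument valid but no longer self-contained, and it should be stated as such rather than attributed to the absorption laws.

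Your fallback, the direct elementwise verification, is precisely the paper's method, but you carry it out only for (\ref{black}), which is by far the easiest of the three: the paper disposes of it in three lines from $\{a\vee(b\vee c)\}=\{a\}\cup\{b^a\}\cup\{(c^b)^a\}$. The identities (\ref{red}) and (\ref{blue}) do not ``proceed analogously'': each demands a genuine case analysis on where an element of one side lives ($x_\gamma\in\{a\wedge b\}$ versus $x_\gamma\in\{c^{a\wedge b}\}$, the subcase $x_\gamma\in\{b\wedge c\}$, and so on), and this case analysis occupies most of the paper's proof. As written, the proposal does not establish (\ref{red}) or (\ref{blue}).
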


\begin{proof}

\smallskip

\par 1. $a \vee b = a \vee c$ and $a \wedge b = a \wedge c$ imply $b = c$.
\smallskip

\par From $a \vee b = a \vee c$ it follows that $\{b^a\} =\{c^a\} $. On the other hand, from $a \wedge b = a \wedge c$ we obtain $\{b_a\} = \{c_a\}$, i.e. $b =c$.

\smallskip

\par 2.   $a \vee d = b \vee d$ and $a \wedge d = b \wedge d$ imply $a = b$.

\smallskip

\par Suppose that $\{x \vee y\} = L_x^f \cup L_x^c \cup R_y^f$. Here $L_x^f:=\{\chi \in \{x\}: (\chi \rightleftarrows u) = 0 \mbox{ for any } u \in \{y\}\}$, $L_c^x: = \{\chi\in \{x\}: (\chi \rightleftarrows u)=1 \mbox{ for some } u \in \{u\}\}$ and $R_y^f: = \{u \in \{u\}: (\chi \rightleftarrows u) = 0 \mbox{ for all } \chi \in \{x\}\}$. Consider also the set $R_y^c: = \{u \in \{u\}: (\chi \rightleftarrows u) = 1 \mbox{ for some } \chi \in \{x\}\}$. We have $\{x\} = L_x^f \cup L_x^c$ and $\{y\} = R_y^f \cup R_y^c$.

\par From $a \vee d = b \vee d$ we get $L^f_a \cup L^c_a \cup R^f_d = L^f_b \cup L^c_b \cup R'^f_d$. Further, from $a \wedge c = b \wedge c$ it follows that 2$R_d^c = R'^c_d$ but from $\{d\} = R_d^f \cup R_d^c = R'^f_d \cup R'^c_d$ we get $R^f_d = R'^f_d$, then $L^f_a \cup L^c_a \cup R^f_d = L^f_b \cup L^c_b \cup R'^f_d$
implies that $L^f_a \cup L^c_a = L^f_b \cup L^c_b $, i.e. $a = b$.

\smallskip

\par 3. $a \vee b = a$ iff $a \wedge b = b$ and $a \wedge b = a$ iff $a \vee b = b.$

\smallskip

\par From $a \vee b = a$ we conclude that $\{b^a\} = \varnothing$, and so $\{b\} = \{b_a\}$ and vice versa. If $a \wedge b = a$ then $\{b_a\} = \{a\}$, and we infer that $\{a \vee b\} = \{a\} \cup \{b^a\} = \{b_a\} \cup \{b^a\} = \{b\}$ and vice versa.

\par 5. $a \vee ( a \wedge b) = a = a \wedge (a \vee b),$ $(a \wedge b) \vee b = b = (a \vee b) \wedge b$

\smallskip

\par From $\{a \wedge b\} := \{b_a\}$ it follows that $\{(a \wedge b)^a\} = \varnothing$; i.e. $\{a \vee (a \wedge b)\} = \{a\}$. Further, $\{a \vee b\} := \{a\} \cup \{b^a\}$ yields $\{(a \wedge b)_a\} = \{a\}$.
\par Observe that $\{b_{a \wedge b}\} = \{b_a\}$ and $\{b^{a \wedge b}\} = \{b^a\}$; i.e., $\{(a \wedge b) \vee b\} = \{b\}$. Note that $\{b^{a \vee b}\} = \{b^a\} \cap \{b^{b^a}\}$ and let us prove that $\{b^{a \vee b}\} = \varnothing$. Now, $\{b^{b^a}\} = \{b_a\}$, from $\{b^a\} \cap \{b_a\} = \varnothing$ we get $\{b^{a \vee b}\} = \varnothing$,
an so $\{b_{a \vee b}\} = \{b\}$, i.e. $\{(a \vee b) \wedge b \} = \{b\}$.

\smallskip

\par 6. $(a \vee b) \wedge ((a \wedge b) \vee c) = (a \wedge (b \vee c)) \vee (b \wedge c)$.

\smallskip
\par First of all we need to describe the column $(a \wedge (b \vee c))$. Suppose that $x_\alpha \in \{a\}$, $y_\beta \in \{b\}$ and $z_\gamma \in \{c\}$ are such that $(x_\alpha \rightleftarrows y_\beta) = 1$, $(x_\alpha \rightleftarrows z_\gamma) =1$ and $(y_\beta \rightleftarrows z_\gamma) = 0$; then $\mathrm{min}\{y_\beta, z_\gamma\} \in \{a \wedge (b \vee c)\}$.

\smallskip

\par Let $x_{\gamma} \in \{(a \vee b) \wedge ((a \wedge b) \vee c)\}$. Then $x_\gamma \in \{((a \wedge b) \vee c)\}$ and there exists $y_{\beta} \in \{a \vee b\}$ such that $(y_\beta \rightleftarrows x_\gamma) = 1$. If $x_\gamma \in \{a \wedge b\}$ then there's no $z_\rho \in \{c^{a \wedge b}\}$ such that $\beta \le \gamma \le \rho$ then $x_\gamma \in \{a \wedge (b \vee c)\}$, i.e., $x_\gamma \in \{(a \wedge (b \vee c)) \vee (b \wedge c)\}$. Let $x_\gamma \in \{c^{(a \wedge b)}\}$ then there is no $x_\rho \in \{a \wedge b\}$ such that $\beta \le \rho \le \gamma$,
and hence  $x_\gamma \in \{a \wedge (b \vee c)\}$, i.e., $x_\gamma \in \{(a \wedge (b \vee c)) \vee (b \wedge c)\}$. We have proved that $\{(a \vee b) \wedge ((a \wedge b) \vee c)\} \subseteq \{(a \wedge (b \vee c)) \vee (b \wedge c)\}$.

\par Let $x_\gamma \in \{(a \wedge (b \vee c)) \vee (b \wedge c)\}$. If $x_\gamma \in \{(a \wedge (b \vee c))\}$ then $x_\gamma \in \{b \vee c\}$ and there exists $y_\alpha \in \{a\}$ with $(y_\alpha \rightleftarrows x_\gamma) = 1$. Assume that $x_\gamma \in \{b\}$. Then there is no $z_\beta \in \{c\}$ such that $\alpha \le \gamma \le \beta$. Therefore, $x_\gamma \in \{(a \wedge b) \vee c\}$. Since $y_\alpha \in \{a\}$, it follows that $y_\alpha \in \{a \vee b\}$ and, since $(y_\alpha \rightleftarrows x_\gamma) = 1$, we see that $x_\gamma \in \{(a \vee b) \wedge ((a \wedge b) \vee c)\}$. Suppose now that $x_\gamma \in \{c^b\}$. Then there is no $z_\rho \in \{b\}$ with $\alpha \le \rho \le \gamma$, and hence $x_\gamma \in \{(a \wedge b) \vee c\}$, and, since there is $y_\alpha \in \{a\}$, we get $x_\gamma \in \{(a \vee b) \wedge ((a \wedge b) \vee c)\}$. Now, consider the case $x_\gamma \in \{(b \wedge c)\}$. There exists $z_\beta \in \{b\}$ such that $(z_\beta \rightleftarrows x_\gamma) = 1$, and there is no $y_\alpha \in \{a \wedge (b \vee c)\}$ such that $(y_\alpha \rightleftarrows x_\gamma) =1$, i.e., if for some $u_\rho \in \{a\}$ there exists $x_{\gamma'} \in \{c\}$ with $(u_\rho \rightleftarrows x_{\gamma'}) =1$ then $\alpha \le \gamma'<\beta$; i.e., $z_\beta \in \{b^a\}$, and hence $x_\gamma \in \{(a \vee b) \wedge ((a \wedge b)\vee c)\}$. We have proved that $\{(a \vee b) \wedge ((a \wedge b) \vee c)\} \supseteq \{(a \wedge (b \vee c)) \vee (b \wedge c)\}$.

\smallskip

\par 7. $(a \wedge (b \vee c)) \wedge (b \wedge c) = (a \wedge b) \wedge c.$

\smallskip

\par Take $c_{\gamma_1} \in \{c\}$ and $c_{\gamma_1} \in \{(a \wedge (b \vee c)) \wedge (b \wedge c)\}$. Then there exists $b_{\beta_1} \in \{b\}$ with $(b_{\beta_1} \rightleftarrows c_{\gamma_1})=1$ and also there exists $x_{c_{\gamma_1}} \in \{a \wedge (b \vee c)\}$ such that $(x_{c_{\gamma_1}} \leftrightarrows c_{\gamma_1}) = 1$ and $(x_{c_{\gamma_1}} \rightleftarrows b_{\beta_1}) = 1$. Since $x_{c_{\gamma_1}} \in \{a \wedge (b \vee c)\}$, there exists $y_\alpha \in \{a\}$ with  $(y_\alpha \rightleftarrows x_{c_{\gamma_1}}) = 1$.

\par Observe that if $x_{c_{\gamma_1}} \in \{b\}$ then $(x_{c_{\gamma_1}}\rightleftarrows c_{\gamma_1}) = 1$ and $(b_{\beta_1}\rightleftarrows c_{\gamma_1}) = 1$ but it is possible iff $x_{c_{\gamma_1}} = b_{\beta_1}$. This means that $c_{\gamma_1} \in \{c\}$ is connected with some $x_{c_{\gamma_1}} \in \{b\}$ which is connected with some $y_\alpha \in \{a\}$; i.e., $\{(a \wedge (b \vee c)) \wedge (b \wedge c)\} \subseteq \{ (a \wedge b) \wedge c\}$.

\par If $x_{c_{\gamma_1}} \in \{c^b\}$ then $\gamma_1 \le \beta_1$ because $(x_{c_{\gamma_1}} \rightleftarrows b_{\beta_1})=1$. This means there is no $b_{\beta_2} \in \{b\}$ with $\alpha_1 \le \beta_2 \le \gamma_1$; i.e., $(b_{\beta_1} \rightleftarrows y_{\alpha_1}) =1$, and hence $(y_{\alpha_1} \rightleftarrows b_{\beta_1})(b_{\beta_1} \rightleftarrows c_{\gamma_1}) =1$; i.e.,  $\{(a \wedge (b \vee c)) \wedge (b \wedge c)\} \subseteq \{ (a \wedge b) \wedge c\}$.

\par Let $c_{\gamma_1} \in \{(a \wedge b) \wedge c\}$, i.e., $c_{\gamma_1} \in \{c\}$ and there exists $b_{\beta_1} \in \{a \wedge b\}$ such that $(b_{\beta_1}\rightleftarrows c_{\gamma_1})=1$. Also for $b_{\beta_1}$ there exists $a_{\alpha_1} \in \{a\}$ such that $(a_{\alpha_1}\rightleftarrows b_{\beta_1}) = 1$. Then $c_{\gamma_1} \in \{b \wedge c\}$, and since $(b_{\beta_1}\rightleftarrows c_{\gamma_1})=1$, we may assume that $a_{\alpha_1} \in \{a \wedge (b \vee c)\}$, i.e., $\{(a \wedge b) \wedge c\} \subseteq \{(a \wedge (b \vee c)) \wedge (b \wedge c)\}$

\smallskip

\par 8. $(a \vee b) \vee ((a \wedge b) \vee c) = a \vee (b \vee c)$.

\smallskip

\par Is not hard to see that $\{a \vee (b \vee c)\} = \{a\} \cup \{(b \vee c)^a\} = \{a\} \cup \{b^a\} \cup \{(c^b)^a\}$ because $b \wedge c^b = e_\varnothing$. Then we get $\{(a \vee b) \vee ((a \wedge b)\vee c)\} = \{a\} \cup \{b^a\} \cup \{((a \wedge b) \vee c)^{(a \vee b)}\} = \{a\} \cup \{b^a\} \cup \{(a\wedge b)^{(a \vee b)}\} \cup \{c^{(a \vee b)}\}  = \{a\} \cup \{b^a\} \cup \{c^{(a \vee b)}\}$, but $\{c^{(a \vee b)}\} = \{(c^b)^a\}$; i.e., $\{(a \vee b) \vee ((a \wedge b) \vee c)\} = \{a \vee (b \vee c)\}$; as claimed.

\smallskip

\par 9.
$$\label{com}
a \vee b = b \Longleftrightarrow a \wedge b = a
$$

\smallskip

\par Indeed, since $\{a \vee b\}: = \{a\} \cup \{b^a\}$ and $\{b\} = \{b_a\} \cup \{b^a\}$, it follows from $a \vee b = b$ that $\{a\} = \{b_a\}$.

\end{proof}

\smallskip

\par Suppose that $a=(a_1; \ldots; a_n) \in \mathrm{I}$ and $\mathrm{wt}(a): = (a_1+\ldots+a_n,a_1, \ldots, a_n)$. Order $\mathrm{I}$ as follows: for any $a,b \in \mathrm{I}$, we say that $a < b$ whenever $\mathrm{wt}(a) > \mathrm{wt}(b)$ lexicographically. Then order $\mathrm{I}^*$ by the deg-lex order.

\smallskip

\begin{remark} Let $a, b \in \mathrm{I}$. Then Schensted's column algorithm and Definition \ref{v} imply that $a\cdot b$ is the leading term iff $a \vee b \ne a$ and $a \wedge b \ne b$.
\end{remark}

\begin{remark} Put $\mathcal{I}: = \{a\cdot b = (a \vee b) \cdot (a \wedge b):a,b \in \mathrm{I}\}$. Then we may assume \cite{LPl} that $\Bbbk \langle \mathrm{I}\rangle/ (\mathcal{I}) \cong \Bbbk \langle A \rangle/ (\Omega)$. Then formulas (3.6), (3.7), (3.8) enable us to prove that $\mathcal{I}$ is the Gr\"obner --- Shirshov basis of the plactic monoid in column generators (see also \cite[Theorem 4.3]{LPl}). In fig. \ref{braidPl}, we show a sketch of the Buchberger --- Shirshov algorithm for the plactic monoid via the binary operations $\vee$ and $\wedge$. In the knot theory spirit,  we can interpret this operation as ``overcrossing'' and ``udercrossing''.
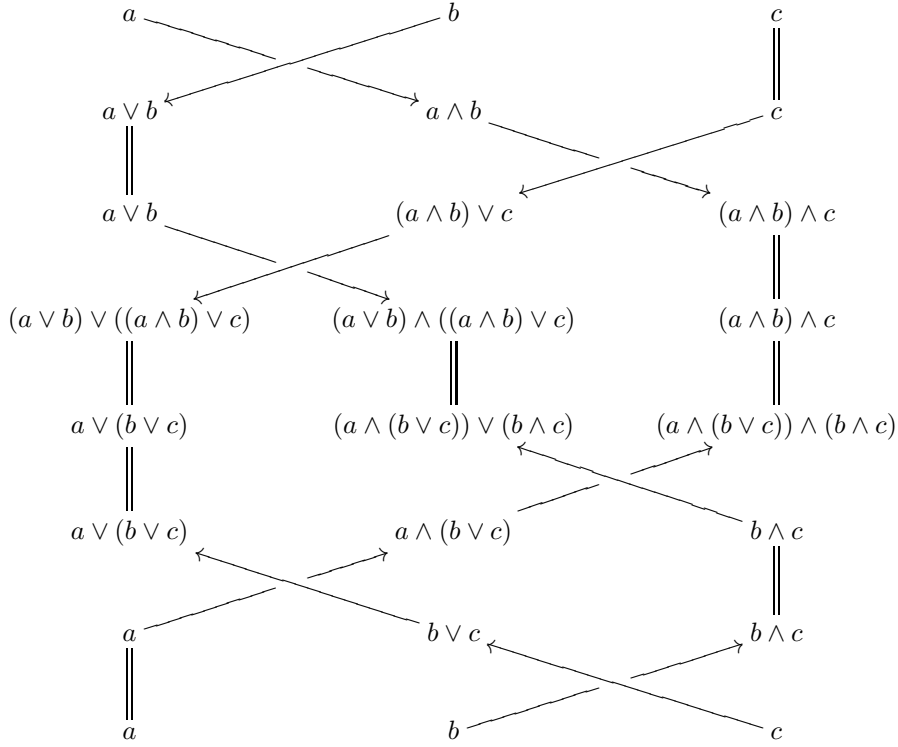
\begin{figure}[h!]
 $$
   \xymatrix{
   a \ar@{->}[rd]|{\phantom{AA}} & b \ar@{->}[ld] & c \ar@{=}[d] \\
   a \vee b \ar@{=}[d] & a\wedge b \ar@{->}[rd]|{\phantom{AA}} & c \ar@{->}[ld] \\
   a \vee b \ar@{->}[rd]|{\phantom{AA}} & (a \wedge b) \vee c \ar@{->}[ld] & (a \wedge b) \wedge c \ar@{=}[d] \\
   (a \vee b) \vee ((a \wedge b) \vee c) \ar@{=}[d] & (a \vee b) \wedge ((a \wedge b) \vee c) \ar@{=}[d] &  (a \wedge b) \wedge c \ar@{=}[d]\\
   a \vee (b \vee c) & (a \wedge (b \vee c)) \vee (b \wedge c) & (a \wedge (b \vee c))\wedge (b \wedge c)\\
   a \vee (b \vee c) \ar@{=}[u] & a \wedge (b \vee c) \ar@{->}[ur]|{\phantom{AA}} & b \wedge c \ar@{->}[ul] \\
   a \ar@{->}[ur]|{\phantom{AA}} & b \vee c \ar@{->}[ul] & b \wedge c \ar@{=}[u]\\
   a \ar@{=}[u] & b \ar@{->}[ur]|{\phantom{AA}} & c \ar@{->}[ul]
   }
 $$
\caption{The ``braid diagram'' for the Buchberger --- Shirshov algorithm for the plactic monoid via column generators.}\label{braidPl}
\end{figure}
\end{remark}

\begin{remark}
The operations $\vee$, $\wedge$ are not associative. Indeed, suppose that $a = e_{i},$ $b = e_j$ and $c = e_k$. Assume that $j < k < i$. Then $(e_i \vee e_j) \vee e_k = e_{ji} \vee e_{k} = e_{ji}e_k$. But $e_i \vee (e_j \vee e_k) = e_i \vee e_k = e_{ik}$; i.e.,
$$
(a \vee b) \vee c \ne a \vee (b \vee c).
$$
\par Assume now that $j<i<k$. Then $(e_i \wedge e_j)\wedge e_k = e_\varnothing \wedge e_k = e_\varnothing$. On other hand, $e_i \wedge (e_j \wedge e_k) = e_i \wedge e_k = e_k$; i.e.,
$$
(a \wedge b) \wedge c \ne a \wedge (b \wedge c).
$$
\par However, we will use the notation $a \vee (b \vee c) := a \vee b \vee c$ and $(a \wedge b) \wedge c:=a \wedge b \wedge c$.
\end{remark}

\section{The Anick Resolution via Column Generators}
Here we describe the Anick resolution for the $\Bbbk\mathrm{Pl}_n$-module $\Bbbk$ and for the  $\Bbbk \mathrm{Pl}_n^e  = \Bbbk \mathrm{Pl}_n \otimes_\Bbbk \Bbbk \mathrm{Pl}_n^{\circ}$-module $\Bbbk \mathrm{Pl}_n$.

\smallskip

\begin{lemma}\label{twoholes}
Given four arbitrary letters (columns) $a,b,c,d$, consider the word $abcd$ and suppose that $a \wedge b = b$, $b \wedge c \ne c$, and $c \wedge d =d$.  Then there is no reduction of this word to a word of the form $a'b'c'd'$ such that $a' \wedge b' \ne b'$, $b'\wedge c' \ne c'$ or
$b'\wedge c' \ne c'$, $c' \wedge d' \ne d'$.
\end{lemma}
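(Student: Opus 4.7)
The plan is to observe that the hypotheses force a unique rewriting step on $abcd$ and then to show, using the identities of Theorem~\ref{knot}, that this step (and any subsequent four-column rewrites) keeps the \emph{central} bigram irreducible; since both forbidden patterns in the statement require $b'\wedge c' \ne c'$, the conclusion will follow. By the Remark characterising leading terms, $x\wedge y = y$ iff $xy$ is irreducible, so $a\wedge b=b$ and $c\wedge d=d$ say that $ab$ and $cd$ cannot be rewritten; hence the only rewriting step available on $abcd$ is $bc\to(b\vee c)(b\wedge c)$, producing $a\,b'c'\,d$ with $b':=b\vee c$ and $c':=b\wedge c$.

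I would next verify the key claim $b'\wedge c' = c'$. This is immediate from Definition~\ref{v}: every $x\in b\wedge c=\{c_{b}\}$ is by definition connected to some $y\in b$, and since $\{b\}\subseteq\{b\}\cup\{c^{b}\}=\{b\vee c\}$ the witness $y$ lies in $b'$; so $x\in(b\vee c)\wedge(b\wedge c)$, and the reverse inclusion is automatic. Hence $b'c'$ is not a leading term, and neither of the two forbidden patterns can occur in the one-step descendant $ab'c'd$.

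If ``reduction'' is read as an arbitrary sequence of rewrites, an inductive propagation completes the argument: in $ab'c'd$ any further rewrite must act on $ab'$ or $c'd$. Reducing $ab'\to(a\vee b')(a\wedge b')$ yields the new central bigram $(a\wedge(b\vee c))(b\wedge c)$, and equation~(\ref{blue}) combined with $a\wedge b=b$ gives
\[
(a\wedge(b\vee c))\wedge(b\wedge c)=(a\wedge b)\wedge c=b\wedge c=c',
\]
so the centre remains irreducible. Symmetrically, reducing $c'd\to(c'\vee d)(c'\wedge d)$ yields the central bigram $(b\vee c)((b\wedge c)\vee d)$, and equation~(\ref{red}) together with $c\vee d=c$ (equivalent to the hypothesis $c\wedge d=d$ by Theorem~\ref{knot}) gives
\[
(b\vee c)\wedge((b\wedge c)\vee d)=(b\wedge(c\vee d))\vee(c\wedge d)=(b\wedge c)\vee d,
\]
again the entire right column of the new middle bigram. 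Iterating, every four-column descendant of $abcd$ has an irreducible centre. The main obstacle I anticipate is the bookkeeping when both outer bigrams are reduced in succession, but the identities~(\ref{red}) and~(\ref{blue}) are tailored precisely to this situation and mechanise the induction.
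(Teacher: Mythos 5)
Your proposal is correct and follows essentially the same route as the paper: the hypotheses force the unique first rewrite $bc\to(b\vee c)(b\wedge c)$, and the identities of Theorem~\ref{knot} show that moving the resulting hole left or right keeps the central bigram irreducible, which rules out both forbidden patterns. The only cosmetic difference is that you certify irreducibility of the left-moved centre via~(\ref{blue}) (showing $x\wedge y=y$) where the paper uses~(\ref{red}) (showing $x\vee y=x$) --- these are equivalent by the relation $x\vee y=x\Leftrightarrow x\wedge y=y$ of Theorem~\ref{knot} --- and, like the paper, you verify only one step in each direction rather than spelling out the full iteration.
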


\begin{proof}
We have
$$
\xymatrix{
& [a|b|c|d] \ar@{->}[d]|{\mathrm{down}} & \\
&[a| b \vee c| b \wedge c| d] \ar@{->}[rd]^{\mathrm{right}} \ar@{->}[ld]_{{\mathrm{left}}} &\\
[a \vee b \vee c| a \wedge (b \vee c)|b \wedge c| d] && [a| b \vee c| (b \wedge c) \vee d| b \wedge c \wedge d]
}
$$
\par 1. Consider the lower left side. Since $a \vee b = a$, $a \wedge b =b$, (\ref{red}) implies that
$$
(a \wedge (b \vee c)) \vee (b \wedge c) = (a \vee b) \wedge ((a \wedge b) \vee c) = a \wedge (b \vee c);
$$
i.e., $(a \wedge (b \vee c)) \cdot (b \wedge c)$ is not the leading term.

\par 2. Consider the  lower right side. Since $c \vee d = c$, $c \wedge d = d$, (\ref{red}) implies that
$$
(b \vee c) \wedge ((b \wedge c) \vee d ) = (b \wedge (c \vee d)) \vee (c \wedge d) = (b \wedge c) \vee d;
$$
i.e., $(b \vee c) \cdot ((b \wedge c) \vee d )$ is not the leading term.
\end{proof}

\smallskip

\begin{lemma}\label{keepchainlemma}
Let $a$, $b$, and $c$ be columns such that $a \wedge b \ne b$ and $b \wedge c \ne c$. Then $a \wedge (b \vee c) \ne b \vee c$ and $(a \wedge b) \wedge c \ne c$.
\end{lemma}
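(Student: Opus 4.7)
The plan is to unpack both inequalities into statements about the sets $\{b^a\}$ and $\{c^b\}$, and then exploit the two obvious inclusions $\{b\}\subseteq\{b\vee c\}$ and $\{a\wedge b\}\subseteq\{b\}$. Specifically, by Definition~\ref{v}, for any columns $u,v$, one has $u\wedge v=v$ iff $\{v_u\}=\{v\}$ iff $\{v^u\}=\varnothing$ (since $\{v\}=\{v_u\}\cup\{v^u\}$ is a disjoint union). Thus the two conclusions of the lemma are equivalent to
\[
\{(b\vee c)^{a}\}\ne\varnothing \qquad\text{and}\qquad \{c^{a\wedge b}\}\ne\varnothing,
\]
while the two hypotheses are $\{b^a\}\ne\varnothing$ and $\{c^b\}\ne\varnothing$.

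For the first inequality, I would pick any $x\in\{b^a\}$, which exists by the hypothesis $a\wedge b\ne b$. By definition $x\in\{b\}$ and $x$ is unconnected to every element of $\{a\}$. Since $\{b\vee c\}=\{b\}\cup\{c^b\}\supseteq\{b\}$, we still have $x\in\{b\vee c\}$, and the unconnectedness to $\{a\}$ is unchanged, so $x\in\{(b\vee c)^{a}\}$. Hence $\{(b\vee c)^{a}\}\ne\varnothing$ and therefore $a\wedge(b\vee c)\ne b\vee c$.

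For the second inequality, I would pick any $z\in\{c^b\}$, which exists by the hypothesis $b\wedge c\ne c$. Then $z\in\{c\}$ and $z$ is unconnected to every element of $\{b\}$. By definition $\{a\wedge b\}=\{b_a\}\subseteq\{b\}$, so in particular $z$ is unconnected to every element of $\{a\wedge b\}$, which places $z$ in $\{c^{a\wedge b}\}$. Hence $\{c^{a\wedge b}\}\ne\varnothing$ and therefore $(a\wedge b)\wedge c\ne c$.

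There is no real obstacle: the proof is a direct bookkeeping exercise once one records the set-theoretic equivalence between "$u\wedge v=v$" and "$\{v^u\}=\varnothing$" and the two inclusions above. The only mild care needed is to remember that the connection relation $\rightleftarrows$ is an attribute of pairs of letters (not of the enclosing columns), so that unconnectedness is preserved when passing to subsets of either side.
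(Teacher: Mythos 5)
Your set-theoretic reformulation is fine: by Definition~\ref{v}, $u\wedge v=v$ is indeed equivalent to $\{v^u\}=\varnothing$, so the lemma reduces to showing $\{(b\vee c)^a\}\ne\varnothing$ and $\{c^{a\wedge b}\}\ne\varnothing$. The gap is in the sentence you describe as "the only mild care needed": the claim that $\rightleftarrows$ is an attribute of pairs of letters, independent of the enclosing columns, is false. The relation $(y\rightleftarrows x)$ is defined by Schensted's insertion algorithm and records which letter of the right-hand column is \emph{bumped} by a given letter of the left-hand column; it therefore depends on the entire pair of columns being multiplied. Concretely, in the product $e_1\cdot e_2$ the letter $1$ bumps the letter $2$, so $(1\rightleftarrows 2)=1$; but in $e_1\cdot e_{2,1}$ the inserted $1$ bumps the letter $1$ instead, so $(1\rightleftarrows 2)=0$ there, even though $\{e_2\}\subseteq\{e_{2,1}\}$. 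So connections are rerouted when one of the columns is enlarged or shrunk, and neither of your two preservation steps --- "the unconnectedness to $\{a\}$ is unchanged" when $b$ is replaced by $b\vee c$, and "$z$ is unconnected to every element of $\{a\wedge b\}$" in the \emph{new} product $(a\wedge b)\cdot c$ --- follows from the definitions alone.

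Both intermediate claims you need are in fact true, but they are genuine statements about the bumping process: one must show that an element surviving insertion of $a$ into $u$ still survives insertion of $a$ into any $v\supseteq u$, and that an element of $c$ surviving insertion of $b$ still survives insertion of any subcolumn $b'\subseteq b$. These follow by induction on the letters inserted in increasing order (comparing the sets of not-yet-bumped elements in the two processes), and that induction is exactly the missing content of your proof. The paper avoids re-examining the insertion process altogether: it deduces the lemma from the identities of Theorem~\ref{knot}, e.g.\ from (\ref{blue}) one gets $(a\wedge b)\wedge c=(a\wedge(b\vee c))\wedge(b\wedge c)$, which has size at most $|b\wedge c|<|c|$, so it cannot equal $c$; and dually from (\ref{black}) together with $a\wedge u=u\Leftrightarrow a\vee u=a$ one gets $|a\vee(b\vee c)|\ge|a\vee b|>|a|$, so $a\wedge(b\vee c)\ne b\vee c$. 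Either supply the monotonicity induction or switch to the identity-based route; as written the proof does not go through.
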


\begin{proof}
Theorem \ref{knot} implies that
$$
a \wedge (b \vee c) \ne a \wedge b \ne b \ne b \vee c, \qquad (a \wedge b) \wedge c \ne b\wedge c \ne c,
$$
as claimed.
\end{proof}

\smallskip

\begin{theorem}\label{CPl}
Let $\Bbbk\mathrm{Pl}(A)$ be the plactic monoid algebra over the field $\Bbbk$ with augmentation $\varepsilon:\Bbbk\mathrm{Pl}(A) \to \Bbbk$, and let $\mathrm{I}$ be a set of generators (columns) of the plactic monoid. Then the vector space $\mathfrak{V}^{(m)}\Bbbk$ spanned by the vectors $a_1 \cdots a_{m+1}$ such that $a_i\wedge a_{i+1} \ne a_{i+1}$ for all $1 \le i \le m$ form an $m$-Anick chain;
moreover, there is an (exact) chain complex of $\Bbbk \mathrm{Pl}_n$-modules:
$$
0 \xleftarrow{} \Bbbk \xleftarrow{\varepsilon} \Bbbk \mathrm{Pl}_n \xleftarrow{d_1} \Bbbk \mathrm{Pl}_n \otimes_\Bbbk \mathrm{I}\Bbbk \xleftarrow{d_2} \Bbbk \mathrm{Pl}_n \otimes_\Bbbk\mathfrak{V}\Bbbk \xleftarrow{d_3} \Bbbk \mathrm{Pl}_n \otimes_\Bbbk\mathfrak{V}^{(2)}\Bbbk \xleftarrow{} \ldots,
$$
where
\begin{multline}\label{AdPl}
d_n([a_1| \ldots | a_\ell]) = \\ =\sum\limits_{i=0}^{\ell-1}(-1)^{i} (a_1 \vee \ldots  \vee a_{i+1}) [\widehat {L_i}] + \sum\limits_{j=1}^{\ell} (-1)^{j}\varepsilon(a_i \wedge \ldots \wedge a_\ell)[\widehat{R_i}] + \sum\limits_{m=1}^{\ell-1}\sum\limits_{m+l+k \le \ell-1}(-1)^{l+k}W_{m,l,k}.
\end{multline}
Here, for $1 \le i,j \le \ell -1$,
\begin{equation}\label{Li}
[\widehat{L_i}]  =\begin{cases}0, \mbox{iff } a_j \vee a_{j+1} = a_j \vee (a_{j+1} \vee a_{j+2}) \mbox{ for some } i \le j \le \ell-2, \\ [a_1 \wedge (a_2 \vee \cdots \vee a_{i+1})| \ldots | a_{i-1}\wedge (a_i \vee a_{i+1})| a_i \wedge a_{i+1}| a_{i+2}| \ldots | a_\ell], \mbox{ otherwise.} \end{cases}
\end{equation}

\begin{equation}\label{Rj}
[\widehat{R_i}]  = \begin{cases} 0, \mbox{ iff } a_j \wedge a_{j+1} = (a_j \wedge a_{j+1})\wedge a_{j+2} \\ [a_1| \ldots | a_{i-1}| a_i \vee a_{i+1}|(a_i \wedge a_{i+1})\vee a_{i+2}|  \ldots| (a_i \wedge \cdots \wedge a_{\ell-1}) \vee a_n], \mbox{ otherwise.} \end{cases}
\end{equation}
Here $\widehat{L_0} = [a_2|\ldots|a_\ell]$, $\widehat{R_{\ell}} = [a_1|\ldots|a_{\ell}],$
\begin{multline*}
W_{m,l,k}  = \\ = \begin{cases} [a_1| \ldots |a_{m-1}|b_m|\ldots|b_{m+l-1}|b_{m+l} \vee c_{m+l+1}|c_{m+l+2}|\ldots|c_{m+l+k}|a_{m+l+k+1}|\ldots|a_\ell], \mbox{ iff } b_{m+l} \wedge c_{m+l+1} = 1_{\mathrm{Pl}_n} \\ 0, \mbox{ otherwise,} \end{cases}
\end{multline*}
here
\begin{multline*}
b_{m+\iota} = \begin{cases} a_m \vee a_{m+1}, \mbox{ if } \iota =0 \\ (a_m \wedge \cdots \wedge a_{m+\iota})\vee a_{m+\iota+1}, \mbox{ if } 1 \le \iota \le l-1 \\ a_m \wedge \cdots \wedge a_{m+l}, \mbox{ if } \iota = l \end{cases} \mbox{ or } \\ b_{m+\iota} =- \begin{cases}a_m \vee \cdots \vee a_{m+l+1}, \mbox{ if } \iota = 0 \\ a_{m+\iota-1} \wedge (a_{m+\iota} \vee \cdots \vee a_{m+l}), \mbox{ if } 1 \le \iota \le l-1\\ a_{m+l-1} \wedge a_{m+l}, \mbox{ if } \iota = l  \end{cases}
\end{multline*}
\begin{multline*}
c_{m+\nu} = -\begin{cases}a_{m+l} \vee \cdots \vee a_{m+l+k}, \mbox{ if } \nu=l+1\\   a_{m+\nu-1} \wedge (a_{m+\nu}\vee \cdots \vee a_{m+l+k}), \mbox{ if } \nu = l+t, \, 1 \le t < k \\ a_{m+l+k-1} \wedge a_{m+l+k}, \mbox{ if } \nu = l+k  \end{cases} \mbox{ or } \\ c_{m+\nu} = \begin{cases} a_{m+l+1} \vee a_{m+l+2}, \mbox{ if } \nu= l+1 \\ (a_{m+l+1} \wedge \cdots \wedge a_{m+\nu})\vee a_{m+\nu+1}, \mbox{ if } \nu = l+t, \, 1 < t< k \\ a_{m+l+1} \wedge \cdots \wedge a_{m+l+k}, \mbox{ if } \nu = l+k. \end{cases}
\end{multline*}

\end{theorem}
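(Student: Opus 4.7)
The plan is to apply Proposition \ref{AR}: the Anick resolution has terms $A_n(\Lambda) = \bigoplus_{v \in \mathfrak{V}^{(n-1)}} \Lambda v$ with differential $d_n(v) = \sum_{v'} \Gamma(v,v')\, v'$, where the weights $\Gamma(v,v')$ are sums of weights of paths in the Morse-matched graph $\Gamma^\mathcal{M}_{B_\bullet(\Lambda,\Bbbk)}$ of Theorem \ref{JSW}. Thus the proof splits into two tasks: identify the spaces $\mathfrak{V}^{(m)}$, and then extract the differential formula (\ref{AdPl}) by summing path weights from each Anick chain back to an Anick chain.

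\textbf{Identification of Anick chains.} By the Remark preceding the theorem, the Gr\"obner--Shirshov basis consists of the column relations $a\cdot b = (a\vee b)(a\wedge b)$, whose leading monomial is $a\cdot b$ exactly when $a\wedge b\ne b$. Since every leading monomial has length two, an $n$-prechain $x_{i_1}\cdots x_{i_t}$ must have its overlapping decomposition satisfy $b_j = a_j+1$ and $a_{j+1}\le b_j$, so $x_{i_1}\cdots x_{i_{m+1}}$ is a prechain iff every adjacent pair $x_{i_k}x_{i_{k+1}}$ is an obstruction; the minimality condition (3) in the definition of an Anick chain is automatic in this length-two setting, yielding the stated description of $\mathfrak{V}^{(m)}$.

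\textbf{Computation of the differential.} Apply the bar differential (\ref{bd}) to the Anick chain $[a_1|\ldots|a_\ell]$. This produces the augmentation term $\varepsilon(a_1)[a_2|\ldots|a_\ell]$ (already critical), the outgoing boundary term $(-1)^\ell[a_1|\ldots|a_{\ell-1}]a_\ell$, and the family of merge terms $(-1)^{\ell-i}[a_1|\ldots|(a_i\vee a_{i+1})(a_i\wedge a_{i+1})|\ldots|a_\ell]$. Each merged vertex is non-critical, so the matching $\mathcal{M}$ of Theorem \ref{JSW} pairs it with the edge that re-splits the merged factor into two consecutive bar entries; following this splitting and its matched partner corresponds to one step along the braid diagram of Figure \ref{braidPl}, propagating the Buchberger--Shirshov reduction one position further along the chain. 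Iterating until a critical cell is reached, the propagation has three regimes: pushing the reduction all the way leftward produces the $\widehat{L_i}$ cascade, whose accumulated left coefficient telescopes to $a_1\vee\cdots\vee a_{i+1}$ by repeated use of identity (\ref{black}); pushing all the way rightward produces the $\widehat{R_j}$ cascade, whose accumulated argument of $\varepsilon$ collapses to $a_j\wedge\cdots\wedge a_\ell$ by repeated use of identity (\ref{blue}); and stopping in the middle produces the interior $W_{m,l,k}$ terms, whose two inner entries $b_{m+l}$ and $c_{m+l+1}$ are supplied by identity (\ref{red}) at the meeting point.

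\textbf{Termination and obstacle.} Lemma \ref{keepchainlemma} ensures that intermediate words remain consecutively obstructed along every leg of the braid, so each cascade continues until a Buchberger--Shirshov step first produces an adjacent pair that is not a leading monomial; Lemma \ref{twoholes} then certifies that no further reduction is possible at that position, accounting for the case splits in (\ref{Li}) and (\ref{Rj}) where a degenerate configuration collapses the term to zero. The main obstacle will be the bookkeeping: distinct bar-differential terms can propagate into the same critical cell via different Morse paths, and one has to verify that the resulting signs combine exactly into the $(-1)^i$, $(-1)^j$, $(-1)^{l+k}$ exponents of (\ref{AdPl}) with no hidden cancellations or double counting, particularly for the interior $W_{m,l,k}$ terms where a leftward and a rightward cascade meet. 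Once the path enumeration is verified, exactness is automatic from Proposition \ref{AR}.
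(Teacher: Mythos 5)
Your overall strategy is exactly the paper's: identify $\mathfrak{V}^{(m)}$ from the quadratic Gr\"obner--Shirshov basis, then enumerate weighted Morse paths from a critical cell $[a_1|\ldots|a_\ell]$ down to critical cells of degree $\ell-1$, organising them into a leftward cascade ($\widehat{L_i}$), a rightward cascade ($\widehat{R_j}$), and interior terms, with Lemma \ref{keepchainlemma} keeping the chain condition along each cascade and Lemma \ref{twoholes} restricting the combinatorics. The paper even uses the same ``hole'' picture you describe, and it likewise asserts the weights $\Gamma([a_1|\ldots|a_\ell]\to L_i)=(-1)^i$, $\Gamma([a_1|\ldots|a_\ell]\to R_i)=(-1)^{\ell-i}$ without the detailed sign verification you flag as the main remaining work.

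There is, however, one genuine gap: your account of where the $W_{m,l,k}$ terms come from. You describe them as a leftward and a rightward cascade ``stopping in the middle,'' with the two inner entries supplied by identity (\ref{red}) at the meeting point. But a path that merely stops in the middle still has $\ell$ bar entries with two non-obstruction positions; that is not an element of $\mathfrak{V}^{(\ell-2)}$, and Lemma \ref{twoholes} --- whose actual role in the paper is precisely to forbid configurations with more than one hole from ever returning to a critical cell --- shows such a path contributes nothing. The paper's mechanism is different and essential: the $W_{m,l,k}$ terms arise only when the product at the meeting point satisfies $b_{m+l}\wedge c_{m+l+1}=1_{\mathrm{Pl}_n}$, so that one tensor factor becomes the identity and is killed in the normalized bar construction $\Lambda/\Bbbk$; this is what shortens the tuple by one and lands the path in $\mathfrak{V}^{(\ell-2)}$. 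Relatedly, you attribute the vanishing cases in (\ref{Li}) and (\ref{Rj}) to Lemma \ref{twoholes}, whereas in the paper they follow from the identities of Theorem \ref{knot} (equations (\ref{black}) and (\ref{blue})): when $a_i\vee a_{i+1}=a_i\vee(a_{i+1}\vee a_{i+2})$ or $a_{i+1}\wedge a_{i+2}=(a_i\wedge a_{i+1})\wedge a_{i+2}$, the corresponding reduction step is not a leading term and the cascade's contribution must be set to zero. Both points need to be repaired before the path enumeration is complete.
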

\begin{proof} Since the Gr\"obner --- Shirshov basis of the plactic monoid via column generators is quadratic nonhomogeneous, $\mathfrak{V}^{(m)} = \{a_1\cdots a_{m+1}: a_i\wedge a_{i+1} \ne a_{i+1}, \mbox{ for all } 1 \le i \le m \}$, for any $m>1$. Following \cite{JW}, we will use the bar notation $[a_1|\ldots|a_{\ell+1}]$ for an $\ell$th Anick chain.

\smallskip

\par Let $[a_1| \ldots| a_\ell] \in \mathfrak{V}^{(\ell-1)}$ be an $\ell-1$-Anick chain. Theorem \ref{JSW} and Proposition \ref{AR} tell us that first we must find all weighted paths $p_i: [a_1|\ldots|a_\ell] \xrightarrow{\omega_i}[b_1|\ldots|b_{\ell-1}]$ such that $[b_1|\ldots|b_{\ell-1}] \in \mathfrak{V}^{(\ell-2)}$.

\par We say that the $n$-tuple $[a_1|\ldots|a_i \vee a_{i+1}|a_i \wedge a_{i+1}| \ldots|a_\ell]$ has a hole at the point $i$. Lemma \ref{keepchainlemma} implies that we can move this hole to the left or to the right in the following sense:
\begin{multline*}
[a_1|\ldots|a_i \vee a_{i+1}|a_i \wedge a_{i+1}| \ldots|a_\ell] \to \\ \to  [a_1|\ldots| a_i\vee a_{i+1}| (a_i \wedge a_{i+1})\vee a_{i+2}|(a_i \wedge a_{i+1})\wedge a_{i+2}|\ldots|a_\ell] \quad \mbox{ movement of the hole to the right by one step}
\end{multline*}
\begin{multline*}
\mbox{movement of the hole to the left by one step } \quad [a_1|\ldots|a_i \vee a_{i+1}|a_i \wedge a_{i+1}| \ldots|a_\ell] \to \\ \to [a_1|\ldots| a_{i-1} \vee (a_i \vee a_{i+1})|a_{i-1} \wedge (a_i \vee a_{i+1})|a_i \wedge a_{i+1}| \ldots|a_\ell].
\end{multline*}

\par Lemma \ref{twoholes} implies that, for finding paths $p_i: [a_1|\ldots|a_\ell] \xrightarrow{\omega_i}[b_1|\ldots|b_{\ell-1}]$, where $[b_1|\ldots|b_{\ell-1}] \in \mathfrak{V}^{(\ell-2)}$, we cannot make more than one hole in the tuple $[a_1|\ldots|a_\ell]$. Assume that  $(a_i \vee a_{i+1})\vee ((a_i \wedge a_{i+1}) \vee a_{i+2}) \ne a_i \vee a_{i+1}$ and $(a_i \wedge (a_{i+1} \vee a_{i+2}))\wedge (a_{i+1} \wedge a_{i+2}) \ne a_{i+1} \wedge a_{i+2}$ for any $1 \le i \le \ell-2$
Then all paths $p_i$ have the form
\begin{multline*}
L_i :[a_1|\ldots|a_\ell] \to \\ \to  [a_1 \vee \cdots \vee a_{i+1}| a_1 \wedge (a_2 \vee \cdots \vee a_{i+1})| a_2 \wedge (a_3 \vee \cdots \vee a_{i+1})| \ldots | a_{i-1}\wedge (a_i \vee a_{i+1})| a_i \wedge a_{i+1}| a_{i+2}| \ldots | a_\ell]\,,
\end{multline*}
\begin{multline*}
R_i :[a_1|\ldots|a_\ell] \to \\ \to [a_1| \ldots | a_{i-1}| a_i \vee a_{i+1}|(a_i \wedge a_{i+1})\vee a_{i+2}| (a_i \wedge a_{i+1} \wedge a_{i+2}) \vee a_{i+3}| \ldots| (a_i \wedge \cdots a_{n-1}) \vee a_\ell| a_1 \wedge \cdots \wedge a_\ell].
\end{multline*}

\par Since $\Gamma([a_1|\ldots|a_\ell] \to L_i) = (-1)^i$ and $\Gamma([a_1|\ldots|a_\ell] \to R_i) = (-1)^{\ell -i}$, (\ref{bd}) implies
$$
\Gamma([a_1|\ldots|a_\ell] \to L_i \to \widehat{L_i}) = (-1)^i\varepsilon(a_1 \vee \ldots \vee a_{i+1}), \quad \Gamma([a_1|\ldots|a_\ell] \to R_i\to \widehat{R_i}) =(-1)^{i}(a_i \wedge \ldots \wedge a_\ell),
$$
and Proposition~\ref{AR} yields~(\ref{AdPl}).

\smallskip

\par Now, suppose that, for some $0 \le i \le \ell$, we have $(a_i \vee a_{i+1})\wedge ((a_i \wedge a_{i+1}) \vee a_{i+2}) = ((a_i \wedge a_{i+1}) \vee a_{i+2})$ or $(a_i \wedge (a_{i+1} \vee a_{i+2}))\wedge (a_{i+1} \wedge a_{i+2}) = a_{i+1} \wedge a_{i+2}$. Theorem \ref{knot} implies that
$$
a_i \vee a_{i+1} = (a_i \vee a_{i+1})\vee ((a_i \wedge a_{i+1}) \vee a_{i+2}) = a_i \vee (a_{i+1} \vee a_{i+2}),
$$
$$
a_{i+1} \wedge a_{i+2} = (a_i \wedge (a_{i+1} \vee a_{i+2}))\wedge (a_{i+1} \wedge a_{i+2}) = (a_i \wedge a_{i+1})\wedge a_{i+2},
$$
and we must put $\widehat{R_i} = 0$ (respectively, $\widehat{L_i} =0$). Finally, assuming that there are equalities of the form $a' \wedge b' = 1_{\mathrm{Pl_n}}$, we infer that there are paths of the form $W_{m,l,k}$ with weight $(-1)^{l+k}$. This completes the proof.

\end{proof}

\smallskip

\begin{theorem}\label{HARPl} In the above notation, we obtain the (exact) chain complex of $\Bbbk \mathrm{Pl}_n^e$-modules
$$
0 \xleftarrow{} \Bbbk \mathrm{Pl}_n^e \xleftarrow{d_0} \Bbbk \mathrm{Pl}_n^e \otimes_\Bbbk \mathrm{I}\Bbbk \xleftarrow{d_1} \Bbbk \mathrm{Pl}_n^e \otimes_\Bbbk\mathfrak{V}\Bbbk \xleftarrow{d_2} \Bbbk \mathrm{Pl}_n^e \otimes_\Bbbk\mathfrak{V}^{(2)}\Bbbk \xleftarrow{} \ldots,
$$
where
\begin{multline}\label{HAdPl}
d_n([a_1| \ldots | a_\ell]) = \\ = \sum\limits_{i=0}^{\ell-1}(-1)^{i} ((a_1 \vee \ldots  \vee a_{i+1}) \otimes 1) [\widehat {L_i}] + \sum\limits_{j=1}^\ell(-1)^{j} (1 \otimes (a_j \wedge \ldots \wedge a_\ell)) [\widehat{R_j}] +  \sum\limits_{m=1}^{\ell-1}\sum\limits_{m+l+k \le \ell-1}(-1)^{l+k}W_{m,l,k}.
\end{multline}
\end{theorem}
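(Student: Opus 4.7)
The plan is to follow the proof of Theorem \ref{CPl} almost verbatim, exploiting the fact that Proposition \ref{HAR} asserts the \emph{same} Morse matching $\mathcal{M}=\bigcup_\omega\mathcal{M}_\omega$ used in Theorem \ref{JSW} is also a Morse matching on the two-sided bar complex $B_\bullet(\Bbbk\mathrm{Pl}_n,\Bbbk\mathrm{Pl}_n)$, with exactly the same Anick chains $\mathfrak{V}^{(n)}$ as critical cells. Consequently the entire combinatorial skeleton of Theorem \ref{CPl} transfers without change: Lemmas \ref{twoholes} and \ref{keepchainlemma} still dictate which paths $p : [a_1|\ldots|a_\ell]\rightsquigarrow[b_1|\ldots|b_{\ell-1}]$ terminate at another Anick chain, and the only such paths are the $L_i$, $R_j$, and $W_{m,l,k}$ described there, with the identical incidence weights $(-1)^i$, $(-1)^{\ell-i}$ and $(-1)^{l+k}$.

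Only the final step — reading off the coefficient contributed by the very last reduction — needs to be redone using the Hochschild bar differential (\ref{Hd}) in place of (\ref{bd}). The two differentials agree on the middle sum $\sum_i(-1)^i[\ldots|\lambda_i\lambda_{i+1}|\ldots]$, so the interior reductions along $L_i$, $R_j$ and the entire computation of $W_{m,l,k}$ are word-for-word identical to the one-sided case. The only discrepancies occur in the two boundary terms of $\partial_n$: the scalar $\varepsilon(\lambda_1)$ is replaced by the left $\Bbbk\mathrm{Pl}_n^e$-action $(\lambda_1\otimes 1)$, and the right multiplication by $\lambda_n$ is replaced by $(1\otimes\lambda_n)$. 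Therefore, when a path of type $L_i$ terminates by popping its leftmost block, the weight picked up at that last step is $(a_1\vee\cdots\vee a_{i+1})\otimes 1$ instead of the scalar $\varepsilon(a_1\vee\cdots\vee a_{i+1})$. Symmetrically, a path of type $R_j$ terminates with the coefficient $1\otimes(a_j\wedge\cdots\wedge a_\ell)$ in place of $\varepsilon(a_j\wedge\cdots\wedge a_\ell)$. Substituting these two coefficient changes into (\ref{AdPl}) produces exactly (\ref{HAdPl}).

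The main bookkeeping obstacle is to verify that the change from $\varepsilon$-terminations to enveloping-module-terminations does not introduce any new path in the Morse graph that ends at an Anick chain. This is where one must be careful, but it is really forced by the structure of the matching: Lemmas \ref{twoholes} and \ref{keepchainlemma} are purely combinatorial constraints on the column reductions $a\cdot b=(a\vee b)\cdot(a\wedge b)$ that create or move holes, and they are insensitive to whether the flanking slots carry $\Bbbk\mathrm{Pl}_n$- or $\Bbbk\mathrm{Pl}_n^e$-coefficients. Hence the complete list of critical-to-critical paths produced in the proof of Theorem \ref{CPl} remains the complete list here. Exactness and $\Bbbk\mathrm{Pl}_n^e$-freeness of the resulting complex are guaranteed by Proposition \ref{HAR}, and by construction the critical cells form $\Bbbk\mathrm{Pl}_n^e\otimes_\Bbbk\mathfrak{V}^{(n-1)}\Bbbk$ in each degree, finishing the proof.
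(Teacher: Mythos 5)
Your proposal is correct and follows essentially the same route as the paper: the author likewise reduces everything to the proof of Theorem \ref{CPl}, keeps the identical path list $L_i$, $R_j$, $W_{m,l,k}$, and only replaces the terminal weights $\varepsilon(a_1\vee\cdots\vee a_{i+1})$ and $\varepsilon(a_j\wedge\cdots\wedge a_\ell)$ by $(a_1\vee\cdots\vee a_{i+1})\otimes 1$ and $1\otimes(a_j\wedge\cdots\wedge a_\ell)$ using the two-sided bar differential (\ref{Hd}) and Proposition \ref{HAR}. Your extra remark that the combinatorial lemmas are insensitive to the coefficient module is a welcome (if brief) justification that the paper leaves implicit.
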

\begin{proof}
The proof is the same as that of Theorem \ref{CPl} with the exception of weights. As in the proof of Theorem \ref{CPl}, we can give an explicit description of the paths:
\begin{multline*}
L_i :[a_1|\ldots|a_\ell] \to \\ \to  [a_1 \vee \cdots \vee a_{i+1}| a_1 \wedge (a_2 \vee \cdots \vee a_{i+1})| a_2 \wedge (a_3 \vee \cdots \vee a_{i+1})| \ldots | a_{i-1}\wedge (a_i \vee a_{i+1})| a_i \wedge a_{i+1}| a_{i+2}| \ldots | a_\ell]\,,
\end{multline*}
\begin{multline*}
R_i :[a_1|\ldots|a_\ell] \to \\ \to [a_1| \ldots | a_{i-1}| a_i \vee a_{i+1}|(a_i \wedge a_{i+1})\vee a_{i+2}| (a_i \wedge a_{i+1} \wedge a_{i+2}) \vee a_{i+3}| \ldots| (a_i \wedge \cdots a_{n-1}) \vee a_\ell| a_1 \wedge \cdots \wedge a_\ell].
\end{multline*}

\par It follows from (\ref{Hd})
$$
\Gamma([a_1|\ldots|a_\ell] \to L_i \to \widehat{L_i}) = (-1)^i(a_1 \vee \ldots \vee a_{i+1})\otimes 1, \quad \Gamma([a_1|\ldots|a_\ell] \to R_i\to \widehat{R_i}) =(-1)^{i}1 \otimes (a_i \wedge \ldots \wedge a_\ell),
$$
and Proposition \ref{HAR} gives~(\ref{HAdPl}).

\end{proof}

\section{The Cohomology Ring of the Plactic Monoid Algebra}

\par We will use the notations $\widehat{L_i}[a_1|\ldots|a_\ell] = \widehat{L_i}= (a_1 \wedge (a_2 \vee \cdots \vee a_{i+1})) \cdots (a_{i-1}\wedge (a_i \vee a_{i+1}))( a_i \wedge a_{i+1})( a_{i+2}\cdots a_\ell)$ and $\widehat{R_i}[a_1|\ldots|a_\ell] = \widehat{R_i} = ((a_1\cdots a_{i-1}) (a_i \vee a_{i+1})((a_i \wedge a_{i+1})\vee a_{i+2}) ((a_i \wedge \cdots \wedge a_{\ell-1})) \vee a_n)$. Here $0 \le i \le \ell -1$ and $1 \le j \le \ell$.

\begin{lemma}
Let $M$ be a $\Bbbk \mathrm{Pl}_n$-module and let $\xi \in \mathrm{Hom}_{\Bbbk}(\mathrm{I}\Bbbk, M)$, $\zeta \in \mathrm{Hom}_\Bbbk(\mathfrak{V}^{(\ell-1)}\Bbbk,M)$. Then $\xi \smile \zeta, \zeta \smile \xi \in \mathrm{Hom}_\Bbbk(\mathfrak{V}^{(\ell)}\Bbbk,M)$ can be described
by the formulas
\begin{equation}\label{lm}
(\xi \smile \zeta)[a_1|\ldots|a_{\ell+1}] = \sum\limits_{i=0}^\ell (-1)^{i}\left( \xi[a_1 \vee \cdots \vee a_{i+1}]\varepsilon(\widehat{L_i})\right)\left((a_1 \vee \cdots \vee a_{i+1})\zeta[\widehat{L_i}]\right),
\end{equation}
\begin{equation}\label{rm}
(\zeta \smile \xi)[a_1| \ldots| a_{\ell +1}]= \sum\limits_{j=1}^{\ell+1}(-1)^{\ell+1-j}\left(\zeta[\widehat{R_j}] \varepsilon(a_j \wedge \cdots \wedge a_{\ell +1})\right) \left(\widehat{R_j}\xi[a_j \wedge \cdots \wedge a_{\ell +1}]\right).
\end{equation}
\end{lemma}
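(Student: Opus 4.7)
The plan is to derive (\ref{lm}) and (\ref{rm}) from the general cup product formula (\ref{m}) by specializing to the bidegrees $(p,q)=(1,\ell)$ and $(p,q)=(\ell,1)$ respectively, and then identifying the surviving Morse paths with the left and right reductions $L_i$ and $R_j$ constructed in the proof of Theorem \ref{CPl}.

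For $\xi\smile\zeta$, with $p=1$ and $q=\ell$, the index $[v]_1$ in (\ref{m}) runs over $\mathfrak{V}^{(0)}=\mathrm{I}$, so $\xi$ is indeed evaluated on a single column. The weight $\Gamma([\lambda]_{1,1},[v]_1)$ is non-zero only along Morse paths that collapse the leftmost bar segment of $[a_1|\ldots|a_{\ell+1}]$ to a single column. By the analysis in Theorem \ref{CPl}, these are exactly the left-reduction paths $L_i$ that iterate the rewriting $a_k a_{k+1}\mapsto (a_k\vee a_{k+1})(a_k\wedge a_{k+1})$ followed by augmentation of the successive right halves. After $i$ such collapses the left factor becomes the column $a_1\vee\cdots\vee a_{i+1}$, picking up the cumulative sign $(-1)^i$ and the scalar $\varepsilon(\widehat{L_i})$ from the successively augmented second factors.

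Second, I would identify the complementary right tensor factor produced by $g_\bullet$: it consists of the remaining columns $a_{i+2}|\ldots|a_{\ell+1}$ together with the right halves $a_k\wedge(a_{k+1}\vee\cdots)$ split off during the collapse, and after projection by $\check{h}_\bullet$ onto critical cells it is exactly the Anick $(\ell-1)$-chain $\widehat{L_i}$ of (\ref{Li}). The middle algebra factor $(\lambda)_{1,p}$ in (\ref{m}) becomes the action of the column $a_1\vee\cdots\vee a_{i+1}$ on $\zeta[\widehat{L_i}]$, reproducing the summand displayed in (\ref{lm}). Formula (\ref{rm}) then follows by the mirror-image argument with $p=\ell$, $q=1$, using the right reductions $R_j$: the single column extracted on the right is $a_j\wedge\cdots\wedge a_{\ell+1}$, the surviving critical $(\ell-1)$-chain on the left is $\widehat{R_j}$, the path weight gives the sign $(-1)^{\ell+1-j}$, and the algebra-action/augmentation roles of the two tensor slots are swapped.

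The main obstacle will be the careful bookkeeping of Morse weights on the tensor complex $B_\bullet(\Lambda)\otimes B_\bullet(\Lambda)$ under the composition $\check{h}_\bullet\otimes\check{h}_\bullet\circ g_\bullet\circ \widehat{h}_\bullet$ appearing in the commutative diagram preceding (\ref{Ag}). One must verify that the only paths contributing to the $(1,\ell)$-piece (resp. the $(\ell,1)$-piece) of (\ref{m}) are the $L_i$ (resp. $R_j$) paths, and that no contributions arise from the mixed $W_{m,l,k}$ reductions of Theorem \ref{CPl}. Lemma \ref{twoholes} is the central tool: it forbids two independent reductions in a single word, and hence rules out tensor decompositions in which both factors contain holes. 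Combined with Lemma \ref{keepchainlemma}, which guarantees that $\widehat{L_i}$ and $\widehat{R_j}$ remain genuine Anick chains, this collapses the general sum in (\ref{m}) to the clean expressions (\ref{lm}) and (\ref{rm}).
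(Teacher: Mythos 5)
Your proposal is correct and takes essentially the same route as the paper: specialize the Morse-theoretic cup-product formula (\ref{Ag})/(\ref{m}) to bidegrees $(1,\ell)$ and $(\ell,1)$, identify the only contributing paths with the reductions $L_i$ and $R_j$ from the proof of Theorem \ref{CPl}, and read off their weights from that computation. The paper's own proof is terser --- it simply asserts that the relevant paths are $\{\widehat{L},\widehat{R}\}$ ``by construction'' --- whereas you make the exclusion of the mixed $W_{m,l,k}$ contributions explicit via Lemma \ref{twoholes} and Lemma \ref{keepchainlemma}, which is a more careful rendering of the same argument.
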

\begin{proof}
Indeed, from (\ref{Ag}) follows that we need to find all paths $\{p\}$ of forms,
$$
\mathfrak{V}^{(\ell)} \ni [a_1|\ldots|a_{\ell+1}] \to [b_1|\ldots|b_\ell] \in \mathfrak{V}^{(\ell-1)}
$$
but from construction of $\widehat{R}$, $\widehat{L}$ (see (\ref{Li}), (\ref{Rj})) follows that $\{p\} =  \{\widehat{R},\widehat{L}\}$, and weights of this paths were found in proof of Theorem \ref{CPl}. This completes the proof.
\end{proof}

\smallskip

\begin{lemma}\label{hlm}
Let $B$ be a $\Bbbk\mathrm{Pl}_n^e$-module, and let $\alpha \in \mathrm{Hom}_\Bbbk (\mathrm{I}\Bbbk, B)$, $\beta \in \mathrm{Hom}_\Bbbk(\mathfrak{V}^{(\ell-1)}\Bbbk, B)$. Then $\alpha \smile \beta, \beta \smile \alpha \in \mathrm{Hom}_\Bbbk(\mathfrak{V}^{(\ell)}\Bbbk, B)$ can be described by the formulas
\begin{equation}\label{hlm}
(\xi \smile \zeta)[a_1|\ldots|a_{\ell+1}] = \sum\limits_{i=0}^\ell (-1)^{i}\left( \xi[a_1 \vee \cdots \vee a_{i+1}]\widehat{L_i}\right)\left((a_1 \vee \cdots \vee a_{i+1})\zeta[\widehat{L_i}]\right),
\end{equation}
\begin{equation}\label{hrm}
(\zeta \smile \xi)[a_1| \ldots| a_{\ell +1}]= \sum\limits_{j=1}^{\ell+1}(-1)^{\ell+1-j}\left(\zeta[\widehat{R_j}] (a_j \wedge \cdots \wedge a_{\ell +1})\right) \left(\widehat{R_j}\xi[a_j \wedge \cdots \wedge a_{\ell +1}]\right).
\end{equation}
\end{lemma}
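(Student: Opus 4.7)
The plan is to mirror the proof of the previous lemma (the $\Bbbk\mathrm{Pl}_n$-module version) essentially verbatim, and to isolate only the one step where the module versus bimodule distinction enters — namely the weights assigned to the two distinguished classes of paths in the Morse graph.

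First I would invoke formula (\ref{Ag}) for the Anick approximation of the diagonal. Specialized to an $\ell$-chain $[a_1|\ldots|a_{\ell+1}] \in \mathfrak{V}^{(\ell)}$, the inner sum runs over all paths in $\Gamma_{B_\bullet(\Lambda,\Lambda)}^{\mathcal{M}}$ that emanate from this chain and terminate in an $(\ell-1)$-Anick chain $[b_1|\ldots|b_\ell]\in \mathfrak{V}^{(\ell-1)}$. As in the proof of Theorem~\ref{CPl}, Lemma~\ref{twoholes} forbids more than one hole in the tuple, and Lemma~\ref{keepchainlemma} together with the hole-movement rules forces such a path to be either an $\widehat{L_i}$-path or an $\widehat{R_j}$-path, for some $0\le i \le \ell$ or $1 \le j \le \ell+1$, with exactly the explicit form already written down in the proofs of Theorems~\ref{CPl} and~\ref{HARPl}.

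Next I would read off the path weights using the bimodule differential~(\ref{Hd}) in place of~(\ref{bd}). For the left-moving path this yields
\[
\Gamma\bigl([a_1|\ldots|a_{\ell+1}] \to L_i \to \widehat{L_i}\bigr) = (-1)^{i}(a_1 \vee \cdots \vee a_{i+1}) \otimes 1,
\]
and for the right-moving path
\[
\Gamma\bigl([a_1|\ldots|a_{\ell+1}] \to R_j \to \widehat{R_j}\bigr) = (-1)^{\ell+1-j}\,1 \otimes (a_j \wedge \cdots \wedge a_{\ell+1}).
\]
These are precisely the statements already computed inside the proof of Theorem~\ref{HARPl}; the only substitution compared with the $\Bbbk\mathrm{Pl}_n$-module case is that $\varepsilon(a_1 \vee \cdots \vee a_{i+1})$ and $\varepsilon(a_j \wedge \cdots \wedge a_{\ell+1})$ are replaced by the genuine bimodule elements $(a_1 \vee \cdots \vee a_{i+1}) \otimes 1$ and $1 \otimes (a_j \wedge \cdots \wedge a_{\ell+1})$, which act on $B$ by right and left multiplication respectively.

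Finally I would substitute these weights into the cup-product formula (\ref{m}) (with the summation collapsed to the two families of paths above) and obtain (\ref{hlm}) and (\ref{hrm}) termwise: the factor $\xi[a_1 \vee \cdots \vee a_{i+1}]\,\widehat{L_i}$ in (\ref{hlm}) comes from the right action contributed by the $\widehat{L_i}$-path at the $p$-th position of the diagonal, while $(a_1\vee\cdots\vee a_{i+1})\,\zeta[\widehat{L_i}]$ comes from the left action on the $(\ell-p)$-th tensor factor, and symmetrically for (\ref{hrm}). The main — and essentially the only — subtlety will be keeping the signs $(-1)^{i}$ and $(-1)^{\ell+1-j}$ aligned with the summation index conventions of (\ref{m}); once the combinatorial identification of paths has been transferred from the module case, no further work is required.
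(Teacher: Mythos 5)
Your proposal is correct and follows essentially the same route as the paper: the paper's own proof simply says the argument is identical to the preceding lemma except for the weights, which are read off from the bimodule differential as in the proof of Theorem \ref{HARPl}. You spell out the path identification (via Lemmas \ref{twoholes} and \ref{keepchainlemma}) and the weight substitution in more detail than the paper does, but no new idea is introduced or needed.
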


\begin{proof}
The proof is the same as for the previous Lemma with the exception of weights. Arguing as in proof of Theorem \ref{HARPl}, we complete the proof.

\end{proof}

\begin{lemma}\label{eqlemma}
In the above notation, from
$$
\sum\limits_{j=1}^{\ell +1}(-1)^j [\widehat{R_j}] \otimes [a_1 \wedge \cdots \wedge a_{\ell+1}] = \sum\limits_{i=0}^\ell (-1)^{i+1}[\widehat{L_i}] \otimes [a_1 \vee \cdots \vee a_{i+1}],
$$
it follows that $ab=ba$ for all $a,b \in \{a_1, \ldots, a_{\ell+1}\}$.
\end{lemma}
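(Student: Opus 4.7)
The plan is to push the tensor identity into $\Bbbk\mathrm{Pl}_n$ via the multiplication map, and then to peel off individual commutators using the Schensted normal form basis.

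First, I would record the algebraic identity, implicit in the paper, that iterating the Schensted relation $ab=(a\vee b)(a\wedge b)$ (Definition \ref{v}) yields, for every admissible $i$ and $j$,
\begin{equation*}
\widehat{R_j}\cdot(a_j\wedge\cdots\wedge a_{\ell+1})=a_1a_2\cdots a_{\ell+1}=(a_1\vee\cdots\vee a_{i+1})\cdot \widehat{L_i}
\end{equation*}
in $\Bbbk\mathrm{Pl}_n$. With this at hand, applying the multiplication map $\mu:\Bbbk\mathrm{Pl}_n\otimes\Bbbk\mathrm{Pl}_n\to\Bbbk\mathrm{Pl}_n$ to the hypothesis collapses the left-hand side to $\bigl(\sum_{j}(-1)^j\bigr)\,a_1\cdots a_{\ell+1}$, whereas the right-hand side becomes $\sum_i(-1)^{i+1}\widehat{L_i}\cdot(a_1\vee\cdots\vee a_{i+1})$, now multiplied in the reverse order.

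I then apply the same reasoning but composing with the twist $\tau(x\otimes y)=y\otimes x$ before $\mu$: the roles of the two sides interchange, so the right-hand side collapses to a scalar multiple of $a_1\cdots a_{\ell+1}$ while the left becomes $\sum_j(-1)^j(a_j\wedge\cdots\wedge a_{\ell+1})\widehat{R_j}$. Taken together these two derived identities force, in $\Bbbk\mathrm{Pl}_n$, that each $\widehat{R_j}$ commutes with the corresponding column $a_j\wedge\cdots\wedge a_{\ell+1}$, and similarly each $\widehat{L_i}$ commutes with $a_1\vee\cdots\vee a_{i+1}$.

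Finally, I exploit the extremal indices: the case $j=\ell+1$ in the first family gives that $a_1\cdots a_\ell$ commutes with $a_{\ell+1}$, and the case $i=0$ in the second family gives that $a_2\cdots a_{\ell+1}$ commutes with $a_1$. Combined with the algebraic rules of Theorem \ref{knot}, iterating over $j=\ell,\ell-1,\ldots,1$ and $i=1,2,\ldots,\ell$ peels off one column at a time and forces $a_pa_q=a_qa_p$ for every $p,q\in\{1,\ldots,\ell+1\}$. The main obstacle is the intermediate step of passing from the summed identities obtained after $\mu$ to individual commutators: a priori, the alternating sum could vanish through cancellations unrelated to column-wise commutation. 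Overcoming this will require the linear independence of distinct Schensted normal forms in $\Bbbk\mathrm{Pl}_n$ and a careful coefficient matching in the normal form basis, with essential use of the braid-like diagram of Figure \ref{braidPl} and the ``hole-tracking'' Lemmas \ref{twoholes} and \ref{keepchainlemma}.
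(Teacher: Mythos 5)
There is a genuine gap, and it is the one you flag yourself at the end. The hypothesis of the lemma is an equality between two $\Bbbk$-linear combinations of \emph{basis} tensors $[u]\otimes[v]$ (bar words of columns) in a free module, and all of its content lives in the term-by-term matching that this linear independence forces. By applying the multiplication map $\mu$ first, you collapse the left-hand side to $\bigl(\sum_{j=1}^{\ell+1}(-1)^j\bigr)\,a_1\cdots a_{\ell+1}$ --- a single scalar multiple (possibly $0$) of one monoid element --- and the right-hand side to one alternating sum in $\Bbbk\mathrm{Pl}_n$. From this single collapsed identity (and its twist by $\tau$) you then assert that ``each $\widehat{R_j}$ commutes with $a_j\wedge\cdots\wedge a_{\ell+1}$ and each $\widehat{L_i}$ commutes with $a_1\vee\cdots\vee a_{i+1}$''; this step is not justified, and it is precisely the whole difficulty. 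One identity in $\Bbbk\mathrm{Pl}_n$ cannot by itself yield the $\binom{\ell+1}{2}$ separate commutation relations, and the remedy you propose --- linear independence of Schensted normal forms \emph{after} multiplying --- comes too late: the information you need is destroyed by $\mu$, not recoverable from its image.

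The paper's proof never leaves the tensor product. Since the $[\widehat{R_j}]\otimes[\cdot]$ and $[\widehat{L_i}]\otimes[\cdot]$ are distinct basis elements, the hypothesis forces a bijective pairing of the summands. Starting from the extremal term $j=\ell+1$, the only possible partner is $i=\ell$, and equating the two basis tensors componentwise yields the system $a_{\ell+1}=a_1\vee\cdots\vee a_{\ell+1}$, $a_k=a_k\wedge(a_{k+1}\vee\cdots\vee a_{\ell+1})$ for $k\le\ell$; the equivalence (\ref{comm}), $a\vee b=b\Longleftrightarrow a\wedge b=a$, which by Schensted's rule $ab=(a\vee b)(a\wedge b)$ is exactly the statement $ab=ba$, then extracts the commutators involving $a_{\ell+1}$. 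Descending induction on $j$ (each $j$ forced to pair with $i=j-1$) produces the remaining relations. If you want to salvage your write-up, replace the passage through $\mu$ by this direct coefficient matching in the tensor basis; the iterated identity $\widehat{R_j}\cdot(a_j\wedge\cdots\wedge a_{\ell+1})=a_1\cdots a_{\ell+1}=(a_1\vee\cdots\vee a_{i+1})\cdot\widehat{L_i}$ that you record is true but is not needed for the argument.
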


\begin{proof}
Let $j = \ell+1$, then there is $0 \le i \le \ell$ such that $(-1)^{\ell+1}[a_2|\ldots|a_\ell] \otimes [a_1] = (-1)^{i+1}[\widehat{L_i}]\otimes [a_1 \vee \ldots \vee a_{i+1}]$, but it is possible iff $i = \ell$, otherwise there exist at least one element $a_i$ such that $a_i = a_{i+1}$ but it is impossible. Then we get
$$
\begin{cases}
a_1 = a_1 \wedge(a_2 \vee \cdots \vee a_{\ell +1})\\
a_2 = a_2 \wedge (a_3 \vee \cdots \vee a_{\ell +1})\\
\hdotsfor{1}\\
a_\ell = a_\ell \wedge a_{\ell+1}\\
a_{\ell+1} = a_1 \vee \cdots \vee a_{\ell +1}
\end{cases},
$$
from (\ref{comm}) follows $a_\ell a_{\ell+1} = a_{\ell+1}a_\ell$, $a_{\ell-1}a_{\ell} = a_{\ell}a_{\ell-1}$, $\ldots,$ $a_1a_{\ell+1} = a_{\ell+1}a_1$. Let $j = \ell$, then we have $(-1)^{\ell}[a_1|\ldots|a_{\ell-1}|a_\ell \vee a_{\ell+1}] \otimes [a_\ell \wedge a_{\ell+1}] = (-1)^{i-1}[\widehat{L_i}]\otimes [a_1 \vee \ldots \vee a_{i+1}]$ for some $0 \le i \le \ell-1$. We again see it is possible iff $i = \ell-1$, we get

$$
\begin{cases}
a_1 = a_1 \wedge(a_2 \vee \cdots \vee a_{\ell })\\
a_2 = a_2 \wedge (a_3 \vee \cdots \vee a_{\ell })\\
\hdotsfor{1}\\
a_{\ell-1} = a_{\ell-1} \wedge a_\ell\\
a_\ell \vee a_{\ell +1} = a_{\ell+1}\\
a_\ell \wedge a_{\ell+1} = a_1 \vee \cdots \vee a_{\ell}
\end{cases},
$$
using (\ref{comm}) we get $a_{\ell-1} a_{\ell} = a_{\ell}a_{\ell-1}$, $a_{\ell-2}a_{\ell} = a_{\ell}a_{\ell-2}$, $\ldots,$ $a_1a_{\ell} = a_{\ell}a_1$. Using induction on $1 \le j \le \ell -1$ we will obtain that $ab=ba$ for all $a,b \in \{a_1, \ldots, a_{\ell+1}\}$, q.e.d.
\end{proof}

\smallskip

\begin{corollary}\label{smileproduct}
If $ab = ba$ for all $a,b \in \{a_1, \ldots, a_{\ell+1}\}$ then the map $Ag_\bullet: A_\bullet(\Bbbk \mathrm{Pl}_n) \to A_\bullet(\Bbbk \mathrm{Pl}_n) \otimes A_\bullet(\Bbbk \mathrm{Pl}_n)$ can be described by the formulas

\begin{equation*}
Ag_{\ell}[a_1|\ldots|a_{\ell}] =\sum\limits_{p+q =\ell} \sum\limits_{\substack{1 \le  i_1 < \ldots < i_p \le p \\ 1 \le j_1 < \ldots < j_q \le q}}\rho_{PQ} [a_{i_1}|\ldots|a_{i_p}]a_{j_1}\cdots a_{j_q}\otimes a_{i_1}\cdots a_{i_p}[a_{j_1}|\ldots|a_{j_q}]
\end{equation*}
here $\rho_{PQ} = \mathrm{sign}\begin{pmatrix} 1 & \ldots & p& p+1& \ldots &  p+q \\ i_1 & \ldots & i_p &  j_{1} & \ldots & j_{q} \end{pmatrix}.$
\end{corollary}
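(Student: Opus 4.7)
The strategy is to unpack the defining relation $Ag_\bullet = (\check h_\bullet\otimes \check h_\bullet) \circ g_\bullet \circ \widehat h_\bullet$ arising from the commuting square of Lemma \ref{homotlemma}, and to specialize formula (\ref{Ag}) to the pairwise-commutative setting. Since the Anick chain $[a_1|\ldots|a_\ell]$ is a critical cell, $\widehat h_\ell$ sends it to the bar symbol $[a_1|\ldots|a_\ell]$ up to terms absorbed by the subsequent projection $\check h\otimes \check h$. Using the cocommutative diagonal $\Delta(x) = x\otimes x$, formula (\ref{g}) for $g_\ell$ then yields
$$g_\ell[a_1|\ldots|a_\ell] = \sum_{p=0}^\ell [a_1|\ldots|a_p](a_{p+1}\cdots a_\ell) \otimes (a_1\cdots a_p)[a_{p+1}|\ldots|a_\ell],$$
so the task reduces to evaluating $\check h_p\otimes \check h_q$ on each summand.

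The core step is to analyze $\check h_p[b_1|\ldots|b_p]$ when the letters $b_i$ pairwise commute in $\Bbbk \mathrm{Pl}_n$. Under that hypothesis, equivalence (\ref{comm}) tells us that for any pair $\{a,b\}$ the operations $a\vee b$ and $a\wedge b$ merely re-order the pair into column-normal form, so every elementary Schensted reduction in the bar complex is an adjacent transposition of letters. By (\ref{h>}), $\check h_p[b_1|\ldots|b_p] = \sum_{[v]\in \mathfrak{V}^{(p-1)}} \Gamma([b_1|\ldots|b_p],[v])[v]$, and an induction on $p$ using the sign $(-1)^i$ produced by the bar differential (\ref{bd}) for the $i$-th swap, together with the J\"ollenbeck--Sc\"oldberg--Welker matching rule of Theorem \ref{JSW}, identifies $\Gamma([b_1|\ldots|b_p],[b_{\sigma(1)}|\ldots|b_{\sigma(p)}])$ with $\mathrm{sign}(\sigma)$ whenever the target is a valid Anick chain.

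Combining the two factors, in the summand indexed by $p$ the left projection $\check h_p$ chooses a size-$p$ ordered subset $I = \{i_1 < \cdots < i_p\}$ of $\{1,\ldots,\ell\}$, absorbing letters from the suffix $(a_{p+1}\cdots a_\ell)$ into the bar chain via commutativity, while the right projection $\check h_q$ picks the complementary ordered subset $J = \{j_1 < \cdots < j_q\}$. The two signs multiply to exactly the shuffle sign $\rho_{PQ}$, and summing over $p$ and over all ordered partitions $(I,J)$ of $\{1,\ldots,\ell\}$ produces the claimed formula.

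The main obstacle is sign bookkeeping: verifying that the product of Morse weights on the two factors, combined with the signs introduced by commuting elements of $\{a_{j_1},\ldots,a_{j_q}\}$ past those of $\{a_{i_1},\ldots,a_{i_p}\}$ via adjacent-swap moves, telescopes exactly to $\rho_{PQ}$. This is a standard shuffle-sign identity once the conventions in (\ref{Ag}), (\ref{g}), (\ref{h>}), and (\ref{bd}) are aligned, but the alignment itself is delicate because the bar-complex and Anick-complex orientations are not a priori synchronized. Lemma \ref{eqlemma}, read in reverse, also guarantees that no non-shuffle terms can survive under the commutativity hypothesis, so the expansion is genuinely finite and of the asserted form.
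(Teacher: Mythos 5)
There is a genuine gap: you have the right overall framework (unpacking $Ag_\bullet=(\check h_\bullet\otimes\check h_\bullet)\circ g_\bullet\circ\widehat h_\bullet$ and matching Morse-path weights with permutation signs, which is essentially what the paper does via the $\Gamma$-factors in~(\ref{Ag})), but you attribute the shuffles to the wrong stage of the composite, and as written your argument produces only the terms with $I=\{1,\ldots,p\}$. The problem is your ``core step.'' After $g_\ell$ splits the word, the summand is $[a_1|\ldots|a_p](a_{p+1}\cdots a_\ell)\otimes(a_1\cdots a_p)[a_{p+1}|\ldots|a_\ell]$, and the letters $a_{p+1},\ldots,a_\ell$ on the left factor sit in the $\Lambda$-coefficient, not in the bar word. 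The Morse graph of Lemma~\ref{homotlemma} has as vertices the bar-basis elements $[b_1|\ldots|b_p]$; coefficients are only carried along as edge weights. Hence $\check h_p$ applied to $[a_1|\ldots|a_p]$ (times a coefficient) can only reach bar words built from reductions of $a_1,\ldots,a_p$ themselves --- it cannot ``absorb letters from the suffix into the bar chain'' and so cannot select an arbitrary size-$p$ subset $I\subseteq\{1,\ldots,\ell\}$.

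The shuffle terms with $I\neq\{1,\ldots,p\}$ come precisely from the part of $\widehat h_\ell[a_1|\ldots|a_\ell]$ that you discard as ``absorbed by the subsequent projection.'' Under the hypothesis $a_ia_j=a_ja_i$, each down--up zigzag $[\ldots|a_i|a_{i+1}|\ldots]\to[\ldots|a_ia_{i+1}|\ldots]=[\ldots|a_{i+1}a_i|\ldots]\to[\ldots|a_{i+1}|a_i|\ldots]$ is an allowed path in $\Gamma_{\mathcal M}$ realizing an adjacent transposition with a sign, so $\widehat h_\ell$ outputs all permuted bar words $[a_{\sigma(1)}|\ldots|a_{\sigma(\ell)}]$ with weight $\mathrm{sign}(\sigma)$; the splitting $g$ then cuts each at position $p$, and $\check h_p\otimes\check h_q$ merely retains those $\sigma$ whose two halves are Anick chains, i.e.\ the $(p,q)$-shuffles with sign $\rho_{PQ}$. (A check in degree $2$: for commuting $a_1,a_2$ one has $\widehat h_2[a_1|a_2]=[a_1|a_2]-[a_2|a_1]$, and the term $-[a_2]a_1\otimes a_2[a_1]$ of $Ag_2$ arises only from $-[a_2|a_1]$, never from $\check h_1[a_1]$.) This is exactly the content of the paper's one-line proof, which counts all paths from the critical cell to bar words $[b_1|\ldots|b_{p+q}]$ whose two halves lie in $\mathfrak{V}^{(p-1)}$ and $\mathfrak{V}^{(q-1)}$: the permutations live in the first $\Gamma$-factor of~(\ref{Ag}), not the last two. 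Your closing appeal to Lemma~\ref{eqlemma} ``read in reverse'' is also not what excludes non-shuffle terms; that exclusion is just the requirement that each half be an Anick chain.
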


\begin{proof} Indeed, (\ref{m}) implies that we must find all paths
$$
\mathfrak{V}^{p+q-1} \ni [a_1| \ldots| a_{p+q}] \to [b_1| \ldots |b_{p+q}] \in \left( \Bbbk \mathrm{Pl}_n / \Bbbk\right)^{\otimes(p+q)},
$$
such that $[b_1| \ldots|b_p] \in \mathfrak{V}^{(p-1)}$ and $[b_{p+1}| \ldots|b_{p+q}] \in \mathfrak{V}^{(q-1)}$. Since all $a_1, \ldots, a_{p+q}$ are commutative, all these paths correspond to the permutations $PQ = \begin{pmatrix} 1 & \ldots & p& p+1& \ldots &  p+q \\ i_1 & \ldots & i_p &  j_{1} & \ldots & j_{q} \end{pmatrix}$, and the weights of this paths correspond to the signature of the corresponding permutations, as claimed.
\end{proof}

\smallskip

\begin{theorem}\label{Ext}
Let $\Bbbk\mathrm{Pl}_n$ be the plactic monoid algebra with $n$ generators over the field $\Bbbk$.  The cohomology ring of $\Bbbk\mathrm{Pl}_n$ is isomorphic to the ring
$$
\mathrm{Ext}_{\Bbbk \mathrm{Pl}_n}^*(\Bbbk, \Bbbk) \cong \left.{\bigwedge}_\Bbbk[\xi_1, \ldots, \xi_{n}]\right/(\xi_i\xi_j =0 \mbox{ iff } a_ia_j \ne a_ja_i).
$$
Here $a_i,a_j$ are columns.
\end{theorem}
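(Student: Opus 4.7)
The plan is to compute $\mathrm{Ext}^*_{\Bbbk\mathrm{Pl}_n}(\Bbbk,\Bbbk)$ by applying $\mathrm{Hom}_{\Bbbk\mathrm{Pl}_n}(-,\Bbbk)$ to the Anick resolution of Theorem~\ref{CPl}, identifying one generator per column in cohomological degree one, and reading off the ring structure from the cup-product formulas (\ref{lm})--(\ref{rm}) together with the combinatorial constraints provided by Lemmas~\ref{keepchainlemma} and~\ref{eqlemma}. After applying $\mathrm{Hom}_{\Bbbk\mathrm{Pl}_n}(-,\Bbbk)$ the cochain group in degree $\ell$ becomes $\mathrm{Hom}_\Bbbk(\mathfrak{V}^{(\ell-1)}\Bbbk,\Bbbk)$, and for the standard augmentation $\varepsilon$ vanishing on non-identity columns, only the $\widehat{L_i}$, $\widehat{R_j}$ and $W_{m,l,k}$ terms in~(\ref{AdPl}) whose $\Lambda$-coefficient is a unit contribute. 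I will let $\xi_a$ denote the cochain dual to the $0$-chain $[a]$ for each column $a\in\mathrm{I}$; these are the candidate generators of the ring.

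Next I compute products. Specializing (\ref{lm}) and (\ref{rm}) to the case when both $\xi$ and $\zeta$ are of degree one, applied to a $1$-chain $[a|b]\in\mathfrak{V}$, shows that $\xi_a\smile\xi_b$ and $\xi_b\smile\xi_a$ differ only by a global sign $(-1)$, so the product is graded-anticommutative and in particular $\xi_a^2=0$; this produces a natural ring map from the full exterior algebra $\bigwedge_\Bbbk[\xi_a\,:\,a\in\mathrm{I}]$ into $\mathrm{Ext}^*$. For a product $\xi_{a_1}\cdots\xi_{a_\ell}$ evaluated on a chain $[a_1|\dots|a_\ell]$, I will iterate Corollary~\ref{smileproduct}: when all the $a_i$ pairwise commute, the diagonal approximation $Ag_\bullet$ becomes the signed shuffle, so $(\xi_{a_1}\smile\cdots\smile\xi_{a_\ell})[a_1|\dots|a_\ell] = \sum_{\sigma\in S_\ell}\mathrm{sgn}(\sigma)$, which is (up to sign) non-zero on the chain itself. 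Hence the exterior monomial attached to a pairwise commuting family is genuinely non-trivial.

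The key structural input is Lemma~\ref{eqlemma}. Its content is that, for a non-vanishing iterated cup product, the $\widehat{L_i}$-boundary pattern and the $\widehat{R_j}$-boundary pattern of the evaluating chain must match; and that matching forces pairwise commutativity of the entries $a_1,\dots,a_\ell$. Reversing the argument, whenever two columns $a_i$, $a_j$ do not commute the chain $[a_i|a_j]$ obstructs the survival of $\xi_{a_i}\smile\xi_{a_j}$ in cohomology: by Lemmas~\ref{twoholes} and~\ref{keepchainlemma} the only $\widehat{L}$/$\widehat{R}$ contributions to $(\xi_{a_i}\smile\xi_{a_j})[a_i|a_j]$ come from the two opposing ends and cancel, giving $\xi_{a_i}\xi_{a_j}=0$. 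Combining these two directions, one gets an isomorphism with $\bigwedge_\Bbbk[\xi_a\,:\,a\in\mathrm{I}]/(\xi_a\xi_b=0\text{ iff }ab\ne ba)$.

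The hard part will be verifying rigorously that there are no further relations -- that is, showing injectivity of the map from the presented quotient to $\mathrm{Ext}^*$. My strategy is to exhibit, for each admissible $\ell$-tuple $(a_{i_1},\dots,a_{i_\ell})$ of pairwise commuting columns, an explicit closed cochain (namely the dual of the corresponding chain under the preferred basis) which pairs non-trivially with $\xi_{a_{i_1}}\cdots\xi_{a_{i_\ell}}$ via the shuffle formula. Cocycle closure requires that every $W_{m,l,k}$-contribution to the coboundary vanishes on commuting tuples, which in turn follows from (\ref{black})--(\ref{blue}) of Theorem~\ref{knot} together with Lemma~\ref{eqlemma}. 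Once this is checked, linear independence of these distinguished cocycles finishes the argument.
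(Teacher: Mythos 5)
There is a genuine gap, and it is at the very first step: your choice of degree-one generators. You take $\xi_a$ to be the dual of the basis vector $[a]$ for \emph{each column} $a\in\mathrm{I}$, which would give $|\mathrm{I}|=2^n-1$ candidate generators, whereas the theorem (and the paper's proof) produces exactly $n$ generators, one per \emph{letter} of the alphabet: the paper's $\xi_i$ is the indicator function $\xi_i(x)=1$ iff the letter $i$ occurs in the column $x$. This is not a cosmetic difference. With the augmentation the paper actually uses ($\varepsilon(x)=1$ for every $x\in\mathrm{Pl}_n$, i.e.\ the trivial module), the degree-one cocycle condition reads $\xi[a]+\xi[b]-\xi[a\vee b]-\xi[a\wedge b]=0$ for every leading pair $[a|b]\in\mathfrak{V}$, and the dual of a single column generally fails it: already for $n=2$ one has $[e_2|e_1]\in\mathfrak{V}$ with $e_2\vee e_1=e_{12}$, $e_2\wedge e_1=e_\varnothing$, so $(d^1\xi_{e_{12}})[e_2|e_1]=-1\neq 0$. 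Hence your $\xi_a$ do not define cohomology classes at all, and the ring map from $\bigwedge_\Bbbk[\xi_a:a\in\mathrm{I}]$ that your argument is built on does not exist. The same problem undermines your strategy for ``no further relations'': the duals of $\ell$-chains are likewise not closed in general, so they cannot serve as the distinguished cocycles you want to pair against. Relatedly, you work with ``the standard augmentation vanishing on non-identity columns''; that is a different augmentation from the paper's $\varepsilon\equiv 1$, it changes the differential of the dualized Anick complex drastically (most of $d^1$ would vanish and $H^1$ would be far larger than $n$-dimensional), and it does not compute the $\mathrm{Ext}$ ring of the trivial module that the theorem is about.

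Two smaller points. Deducing $\xi_a^2=0$ from graded anticommutativity of degree-one classes fails in characteristic $2$; the paper gets $\xi_i\smile\xi_i=0$ in all characteristics from the explicit determinant formula $\xi_{i_1}\smile\cdots\smile\xi_{i_\ell}[a_1|\ldots|a_\ell]=\det\bigl(\xi_{i_k}(a_j)\bigr)$ (two equal rows). And concluding $\xi_{a_i}\xi_{a_j}=0$ in cohomology because the cochain vanishes on the single chain $[a_i|a_j]$ conflates vanishing on one basis element with vanishing of the class; the paper instead shows via Lemma~\ref{eqlemma} and Corollary~\ref{smileproduct} that the product cochain vanishes on \emph{every} chain whose entries do not pairwise commute, and then analyzes the surviving determinant values by a triangularity argument. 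Your reading of the roles of Lemma~\ref{eqlemma}, Corollary~\ref{smileproduct} and the shuffle-type diagonal is broadly in the right spirit, but the proof cannot be repaired without first replacing your generators by the letter-indicator cocycles and verifying (as the paper does in its four-case check) that these are indeed closed.
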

\begin{proof}
Since there are no relations of the form $ab=\alpha \in \Bbbk$, we may assume that the augmentation map $\varepsilon:\Bbbk\mathrm{Pl}_n \to \Bbbk$ is the identity map, i.e., $\varepsilon(x) = 1$ for any $x \in \mathrm{Pl}_n$. We get the cochain complex
$$
0 \to \Bbbk\mathrm{Pl}_n \xrightarrow{d^0} \mathrm{Hom}_{\Bbbk}(\mathrm{I}\Bbbk, \Bbbk) \xrightarrow{d^1} \mathrm{Hom}_\Bbbk(\mathfrak{V}\Bbbk, \Bbbk) \xrightarrow{d^2} \mathrm{Hom}_\Bbbk(\mathfrak{V}^{(2)}\Bbbk, \Bbbk) \to \ldots
$$
where
$$
(d^0x)[a] = \varepsilon(a) - \varepsilon(a) = 0, \quad (d^1\xi)[a|b] = \xi[a] + \xi[b] - \xi[a \vee b] - \xi[a \wedge b],
$$
and for any $\varphi \in \mathrm{Hom}_\Bbbk(\mathfrak{V}^{(\ell-1)}\Bbbk, \Bbbk)$ we have
$$
(d^\ell\varphi)[a_1|\ldots|a_{\ell+1}] = \sum\limits_{i=0}^\ell(-1)^{i} \varphi[\widehat {L_i}] + \sum\limits_{j=1}^{\ell+1}(-1)^{j} \varphi[ \widehat{R_j}]+  \sum\limits_{m=1}^{\ell-1}\sum\limits_{m+l+k \le \ell-1}(-1)^{l+k}\varphi(W_{m,l,k}).
$$

\par Consider the following functions:
$$
\xi_i(x) = \begin{cases} 1, \mbox{ if } \{e_i\} \subseteq \{x\}, \\ 0, \mbox{ otherwise,}   \end{cases}
$$
here $e_i, \, x = e_{x_1, \ldots, x_\ell} \in \mathrm{I}.$

\par It's not hard to see that $\xi_i(x)$ are cocycles. Indeed, there are following possibilities, 1) let $\{e_i\} \subseteq \{a\}$ and $\{e_i\} \in \{b\}$ then $\{e_i\} \in \{a \vee b\}$ and $\{e_i\} \in \{a \wedge b\}$ it follows $(d^1\xi_i)[a|b] = 1+1-1-1 = 0$, 2) let $\{e_i\} \in \{a\}$ and $\{e_i\} \notin \{b\}$ then $\{e_i\} \in \{a \vee b\}$ and $\{e_i\} \notin \{a \wedge b\}$ it follows $(d^1\xi_i)[a|b] = 1+0-1-0 = 0$, 3) let $\{e_i\} \notin \{a\}$ and $\{e_i\} \in \{b\}$ then we have to consider two cases; 3a) $\{e_i\} \in \{a \vee b\}$ then $\{e_i\} \notin \{a \wedge b\}$ it follows $(d^1\xi_i)[a|b] = 0+1 - 1 - 0 = 0$, 3b) $\{e_i\} \notin \{a \vee b\}$ then $\{e_i\} \in \{a \wedge b\}$ it follows $(d^1\xi_i)[a|b] = 0+1 - 0 - 1 = 0$, 4) let $\{e_i\} \notin \{a\}$ and $\{e_i\} \notin \{b\}$ then $\{e_i\} \notin \{a \vee b\}$ and $\{e_i\} \notin \{a \wedge b\}$ it follows $(d^1\xi_i)[a|b] = 0+0-0-0= 0$.

\par Let $\vartheta_p \in \mathrm{Hom}_\Bbbk(\mathfrak{V}^{(p-1)}\Bbbk, \Bbbk)$ and let $\vartheta_q \in \mathrm{Hom}_\Bbbk(\mathfrak{V}^{(q-1)}\Bbbk, \Bbbk)$. Since the comultiplication $\Bbbk \mathrm{Pl}_n \otimes \Bbbk \mathrm{Pl}_n \leftarrow \Bbbk \mathrm{Pl}_n:\Delta (x) = x \otimes x$ is cocomutative, then the product $\smile$ must be skew commutative, i.e. $(\vartheta_p \smile \vartheta_q)= (-1)^{pq} (\vartheta_q \smile \vartheta_p)$. Consider the commutative diagram
$$
  \xymatrix{
  \mathrm{Hom}_{\Bbbk} (\mathfrak{V}^{(p-1)}\Bbbk, \Bbbk) \otimes \mathrm{Hom}_{\Bbbk} (\mathfrak{V}^{(q-1)}\Bbbk, \Bbbk) \ar@{->}[r]^(.62){\smile} \ar@{->}[d]_{\tau^*} & \mathrm{Hom}_{\Bbbk} (\mathfrak{V}^{(p+q-1)}\Bbbk,\Bbbk) \ar@{->}[d]^{\breve\tau} \\
  \mathrm{Hom}_{\Bbbk} (\mathfrak{V}^{(q-1)}\Bbbk, \Bbbk) \otimes \mathrm{Hom}_{\Bbbk} (\mathfrak{V}^{(p-1)}\Bbbk, \Bbbk) \ar@{->}[r]_(.62){\smile} & \mathrm{Hom}_{\Bbbk} (\mathfrak{V}^{(p+q-1)}\Bbbk, \Bbbk).
  }
$$
Here $\tau:A_\bullet(\Bbbk\mathrm{Pl}_n,\Bbbk) \otimes A_\bullet(\Bbbk\mathrm{Pl}_n,\Bbbk) \to A_\bullet(\Bbbk\mathrm{Pl}_n,\Bbbk) \otimes A_\bullet(\Bbbk\mathrm{Pl}_n,\Bbbk)$ is the chain automorphism such that $\tau: x \otimes y \to (-1)^{\mathrm{deg}(x)\mathrm{deg}(y)}y \otimes x$.

\par We may assume without loss of generality that $p=1$ and $q =\ell$.  Suppose that $\xi \in \mathrm{Hom}_\Bbbk(\mathrm{I}\Bbbk, \Bbbk)$ and $\zeta \in \mathrm{Hom}_\Bbbk(\mathfrak{V}^{(\ell-1)}, \Bbbk)$. Then $\xi \smile \zeta = (-1)^\ell \zeta \smile \xi$. Using (\ref{lm}), we obtain
\begin{multline*}
(\xi \smile \zeta)[a_1|\ldots|a_{\ell+1}] \leftrightsquigarrow  (\xi \bigvee \zeta)\left(\sum\limits_{i=0}^{\ell}(-1)^i[a_1 \vee \cdots \vee a_{i+1}] \otimes [\widehat{L_i}]\right) \xrightarrow{\tau^*} \\ \xrightarrow{\tau^*} (-1)^\ell(\zeta \bigvee \xi)\left(\sum\limits_{i=0}^\ell(-1)^i\widehat{L_i} \otimes [a_1 \vee \cdots \vee a_{i+1}]\right) \leftrightsquigarrow (-1)^\ell (\zeta \smile \xi)[a_1|\ldots|a_{\ell+1}],
\end{multline*}
and (\ref{rm}) implies that
$$
\sum\limits_{j=1}^{\ell +1}(-1)^j [\widehat{R_j}] \otimes [a_1 \wedge \cdots \wedge a_{\ell+1}] = \sum\limits_{i=0}^\ell (-1)^{i+1}[\widehat{L_i}] \otimes [a_1 \vee \cdots \vee a_{i+1}].
$$
Lemma \ref{eqlemma} now gives that if $ab = ba$ for all $a,b \in \{a_1, \ldots, a_{\ell+1}\}$ then $(\xi \smile \zeta)[a_1|\ldots|a_{\ell+1}] \ne 0$. Finally, using  Corollary \ref{smileproduct} and (\ref{lm}), we have
\begin{multline}\label{prodinclas}
(\vartheta_p \smile \vartheta_q)[a_1|\ldots|a_{p+q}] = \\= \begin{cases} \sum\limits_{\substack{1 \le  i_1 < \ldots < i_p \le p \\ 1 \le j_1 < \ldots < j_q \le q}}\rho_{PQ} \vartheta_p[a_{i_1}|\ldots|a_{i_p}]\vartheta_q[a_{j_1}|\ldots|a_{j_q}], \mbox{ iff } a_ia_{i+1} = a_{i+1}a_i \mbox{ for all } 1 \le i \le p+q-1 \\0, \mbox{ otherwise.} \end{cases}
\end{multline}

\smallskip

\par Let us assume now all columns $a_1, \ldots, a_\ell$ are pairwise commutative, then from (\ref{prodinclas}) follows that
\begin{equation}
\xi_{i_1} \smile \cdots \smile \xi_{i_\ell}[a_1| \ldots|a_\ell] = \begin{Vmatrix} \xi_{i_1}(a_1) & \ldots & \xi_{i_1}(a_\ell) \\
 \vdots & \ddots & \vdots \\
 \xi_{i_\ell}(a_1) &\ldots & \xi_{i_\ell}(a_\ell) \end{Vmatrix}
\end{equation}

\par It's not hard to see that if $\xi_{i_k}(a_j) = 0$ then $\xi_{i_k}(a_{j-t}) = 0$ for any $1 \le t \le j-1$, then using Laplace expansion we can express the determinant via determinants of sub-matrices. It means our determinant is non zero iff the correspondence matrix is upper triangular matrix, q.e.d.

\end{proof}

\paragraph{The Hochschild Cohomology Ring $HH^*(\mathrm{Pl}_n)$ of the plactic monoid algebra.} Theorem \ref{HARPl} implies that the Hochschild cohomology of the plactic monoid algebra is isomorphic to the homology of the cochain complex
$$
0 \to \Bbbk\mathrm{Pl}_n \xrightarrow{d^0} \mathrm{Hom}_{\Bbbk}(\mathrm{I}\Bbbk, \Bbbk\mathrm{Pl}_n) \xrightarrow{d^1} \mathrm{Hom}_{\Bbbk}(\mathfrak{V}\Bbbk, \Bbbk\mathrm{Pl}_n) \xrightarrow{d^2} \mathrm{Hom}_{\Bbbk}(\mathfrak{V}^{(2)}\Bbbk, \Bbbk\mathrm{Pl}_n) \to \ldots
$$
Here, for $w \in \Bbbk \mathrm{Pl}_n$, $a,b, a_1, \ldots, a_n \in I$ and $\psi \in \mathrm{Hom}_\Bbbk(\mathfrak{V}^{(\ell-1)},\Bbbk \mathrm{Pl}_n)$, we have
$$
(d^0w)[a] = f(wa) - f(aw), \qquad (d^1\psi^1)[a|b] = \psi[a]b+a\psi[b]-\psi[a \vee b](a \wedge b) - (a \vee b)\psi[a \wedge b],
$$

\begin{multline*}
(d^\ell\psi)[a_1 | \ldots| a_{\ell+1}] = \\ = \sum\limits_{i=0}^{\ell}(-1)^{i}(a_1 \vee \ldots \vee a_{i+1}) \psi[\widehat {L_i}] + \sum\limits_{j=1}^{\ell+1}(-1)^{j} \psi[\widehat{R_j}](a_j \wedge \ldots \wedge a_{m}) +  \sum\limits_{m=1}^{\ell-1}\sum\limits_{m+l+k \le \ell-1}(-1)^{l+k}\psi(W_{m,l,k}).
\end{multline*}

\par As is well known \cite[Lemma 3]{CO} the center $\mathrm{Z} (\mathrm{Pl}_n)$ is equal to the cyclic monoid $\langle e_{1,\ldots,n} \rangle$, i.e., $HH^0(\Bbbk \mathrm{Pl}_n) \cong \Bbbk[e_{1,\ldots,n}]$.

\begin{proposition}\label{der}
For any columns $e_i, a \in \mathrm{I}$, the cochains $\dfrac{\partial a}{\partial e_i}: \mathrm{I}\Bbbk \to \Bbbk\mathrm{Pl}_n$ defined by the rule
$$
\dfrac{\partial a}{\partial e_i} = \begin{cases} \{a \setminus e_i\}, \mbox{ if } \{e_i\} \subseteq \{a\} , \\ 0, \mbox{ otherwise.}  \end{cases}
$$
are derivations, moreover these derivations are not inner.
\end{proposition}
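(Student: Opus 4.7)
The statement has two parts to verify: that $\delta_i := \partial/\partial e_i$ is a $1$-cocycle in the Hochschild cochain complex of Theorem~\ref{HARPl}, and that it is not a $1$-coboundary. I would attack these in order.

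\textbf{Cocycle property.} Using the Hochschild coboundary formula displayed just before the proposition, the condition $(d^1\delta_i)[a|b]=0$ amounts to the identity
\begin{equation*}
\delta_i(a)\,b \;+\; a\,\delta_i(b) \;=\; \delta_i(a\vee b)\,(a\wedge b) \;+\; (a\vee b)\,\delta_i(a\wedge b)
\end{equation*}
in $\Bbbk\mathrm{Pl}_n$, for every pair of columns $a,b$. The natural approach is a case analysis on whether the letter $i$ appears in $\{a\}$ and/or $\{b\}$. Since every column contains each letter at most once and the plactic product preserves the multiset of letters, the total count of $i$'s in $\{a\vee b\}\cup\{a\wedge b\}$ equals that in $\{a\}\cup\{b\}$, giving four cases: (A) $i\notin\{a\}\cup\{b\}$ (all four terms vanish trivially); (B) $i\in\{a\}\setminus\{b\}$; (C) $i\in\{b\}\setminus\{a\}$; (D) $i\in\{a\}\cap\{b\}$. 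Cases (B), (C) force $i$ to lie in exactly one of $a\vee b$, $a\wedge b$, reducing the statement to a two-term identity. In case (D), both $a\vee b$ and $a\wedge b$ contain $i$ and one must check
\begin{equation*}
a^{-i}\cdot b \;+\; a\cdot b^{-i} \;=\; (a\vee b)^{-i}\cdot(a\wedge b) \;+\; (a\vee b)\cdot(a\wedge b)^{-i},
\end{equation*}
where $x^{-i}$ denotes the column $x$ with the letter $i$ deleted. For each case I would use the construction of $\vee,\wedge$ via the connection relation $\rightleftarrows$ (Definition~\ref{v}) together with the absorption and exchange identities of Theorem~\ref{knot} (in particular $(a\vee b)\wedge b=b$, the dual $a\vee(a\wedge b)=a$, and the "black/red/blue" identities) to identify the resulting columns and to match the two sides.

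\textbf{Non-innerness.} Grade $\Bbbk\mathrm{Pl}_n$ by total word length; since Knuth relations preserve length this grading is well defined and $\Bbbk\mathrm{Pl}_n=\bigoplus_{k\ge 0}(\Bbbk\mathrm{Pl}_n)_k$. For any $w=\sum_k w_k\in\Bbbk\mathrm{Pl}_n$ with $w_k$ of length $k$, the inner derivation $a\mapsto wa-aw$ sends a length-$d$ column into $\Bbbk\mathrm{Pl}_n$-elements of length $\ge d$ (each summand $w_k a - a w_k$ has length $k+d$). By contrast, $\delta_i$ sends the length-$1$ column $e_i$ to $1_{\mathrm{Pl}_n}$ (length $0$), so has a nontrivial "degree $-1$" component. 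No $0$-coboundary can produce such a component, hence $\delta_i$ is not inner.

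\textbf{Main obstacle.} The technically delicate step is case (D) of the cocycle check: one must track how the two copies of the letter $i$ (one in $a$, one in $b$) are routed by the Schensted bumping into $a\vee b$ and $a\wedge b$. The key observation is that the connection relation is controlled by the "rightmost element $\ge$" rule, so a copy of $i$ in $a$ and a copy of $i$ in $b$ must match, placing the two copies predictably (one in $a\vee b$, one in $a\wedge b$). The identity then becomes a statement that deleting one copy of $i$ commutes in an appropriate sense with the operations $\vee$ and $\wedge$; I would establish this from the set-theoretic definitions $\{a\vee b\}=\{a\}\cup\{b^a\}$ and $\{a\wedge b\}=\{b_a\}$ by distinguishing the subcases according to which copy of $i$ is removed.
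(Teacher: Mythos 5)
Your proposal is correct and takes essentially the same route as the paper: the cocycle identity is verified there by the same case analysis on whether $e_i$ lies in $\{a\}$ and/or $\{b\}$ (with your case (C) split into two subcases according to whether the letter lands in $a\vee b$ or in $a\wedge b$, and your case (D) handled exactly by tracking which copy of $e_i$ goes where), using the set-theoretic descriptions $\{a\vee b\}=\{a\}\cup\{b^a\}$, $\{a\wedge b\}=\{b_a\}$. The paper's non-innerness argument is likewise a length count ($|wa|=|aw|=|w|+|a|>|a|-1=\left|\frac{\partial a}{\partial e_i}\right|$); your formulation via the length grading is just a cleaner packaging of the same idea.
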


\begin{proof}
\par For any column $c = (c_1; \ldots;c_n)$, put $|c| = c_1 + \ldots + c_n$. Let us prove that $\dfrac{\partial}{\partial e_i} \notin \mathrm{Im}(d^0)$. Assume that $\lambda \vee a = \dfrac{\partial a}{\partial e_i}$ for some $\lambda \in \mathrm{I}$. Then (2.12) implies that $a = \dfrac{\partial a}{\partial e_i} \wedge a$, but then $\left|\dfrac{\partial a}{\partial e_i} \right| \le |a|$ leads to a~contradiction. Suppose now that $a \vee \lambda = \dfrac{\partial a}{\partial e_i}$. Then $|a \vee \lambda| \ge |a|$
gives a~contraction. This means that $\dfrac{\partial }{\partial e_i}\notin \mathrm{Im}(d^0)$.

\smallskip

\par Show that this is a~one-dimensional cocycle. We infer
$$
\left(d^1 \dfrac{\partial}{\partial e_i}\right)[a|b] =  \dfrac{\partial a}{\partial e_i}b + a \dfrac{\partial b}{\partial e_i} - \dfrac{\partial (a \vee b)}{\partial e_i}(a \wedge b) - (a \vee b)\dfrac{\partial (a \wedge b)}{\partial e_i}.
$$
We can represent $\dfrac{\partial y}{\partial e_i}$ as $\left\{\dfrac{\partial y}{\partial e_i}\right\} = \delta_{\{e_i\} \subseteq \{y\}}\{y\} \setminus \{e_i\}$, where
$$
\delta_{\{e_i\} \subseteq \{y\}} = \begin{cases}1, \mbox{ iff } \{e_i\} \subseteq \{y\}, \\ 0, \mbox{ otherwise.}  \end{cases}
$$

\par We obtain:
\begin{multline*}
 \left( d^1 \dfrac{\partial}{\partial x} \right)[a|b]  = \delta_{\{e_i\}\subseteq \{a\}}(\{a\} \setminus \{e_i\})b + \delta_{\{e_i\}\subseteq \{b\}}a(\{b\}\setminus \{e_i\}) - \\ - \delta_{\{e_i\}\subseteq \{a \vee b\}}(\{(a \vee b)\}\setminus \{e_i\})(a \wedge b) - \delta_{\{e_i\} \subseteq \{a \wedge b\}}(a \vee b)(\{a \wedge b\}\setminus \{e_i\}).
\end{multline*}

\par Case 1. $\delta_{\{e_i\}\subseteq \{b\}} = \delta_{\{e_i\} \subseteq \{a \wedge b\}} =0$, $\delta_{\{e_i\}\subseteq \{a\}} = \delta_{\{e_i\}\subseteq \{a \vee b\}} = 1.$ Then $\{(a \setminus e_i) \vee b\} = \{(a \setminus e_i)\} \cup \{b^a\}$, and $\{(a \setminus e_i) \wedge b\} = \{b_a\}$, and also we see that $\{(a \vee b) \setminus e_i\} = \{a \setminus e_i\} \cup \{b^a\}$.

\par Case 2. $\delta_{\{e_i\}\subseteq \{b\}} = \delta_{\{e_i\} \subseteq \{a \wedge b\}} =1$ and $\delta_{\{e_i\}\subseteq \{a\}} = \delta_{\{e_i\}\subseteq \{a \vee b\}} = 0.$ Then $\{(a \wedge b)\setminus e_i\} = \{a \wedge (b \setminus e_i)\}$ and $\{a \vee (b \setminus e_i)\} = \{a\} \cup \{b^a\}$. It is also not hard to see that $\{((a \wedge b)\setminus e_i)^{a \vee b}\} = \{(a \wedge b)\setminus e_i)\} = \{a \wedge (b \setminus e_i)\}$.

\par Case 3. Suppose that $\delta_{\{e_i\} \subseteq \{a \wedge b\}} =0$ and $\delta_{\{e_i\} \subseteq \{b\}}=1$. Then $\delta_{\{e_i\}\subseteq \{a\}} = 0$. We get $\{(a \vee b) \setminus e_i\} = \{a\} \cup \{b^a \setminus e_i\} = \{a\} \cup \{(b \setminus e_i)^a\}$, and $\{a \wedge (b \setminus e_i)\} = \{a \wedge b\}$.

\par Case 4. $\delta_{\{e_i\}\subseteq \{b\}} = \delta_{\{e_i\} \subseteq \{a \wedge b\}} =1$ and $\delta_{\{e_i\}\subseteq \{a\}} = \delta_{\{e_i\}\subseteq \{a \vee b\}} = 1.$ Then $\{e_i\} \subseteq \{a\} \cap \{b\}$. We see that  $\{(a \vee b) \setminus e_i\} = \{a \setminus e_i\} \cup \{b^a\}$ and $\{(a \setminus e_i)\wedge b\} = \{(a \wedge b)\setminus e_i\}$. Therefore, $\{((a \vee b)\setminus e_i)\wedge (a \wedge b)\} = \{((a\setminus e_i)\vee b) \wedge (a \wedge b)\} = \{((a\vee b) \wedge (a \wedge b))\setminus e_i\} = \{(a \wedge b) \setminus e_i\}$. Further, $\{a \wedge (b \setminus e_i)\} = \{(a \wedge b)\setminus e_i\}$ and $\{(a \vee b) \vee ((a \wedge b)\setminus )\} = \{((a \vee b)\vee (a \wedge b))\setminus e_i\} = (a \vee b) \setminus e_i$.

\par If we assume that $\delta_{\{e_i\}\subseteq \{a \vee b\}} = 1$ then $\delta_{\{e_i\}\subseteq \{a\}} = 1$; otherwise, if $\delta_{\{e_i\}\subseteq \{a \vee b\}} = 0$ then $\delta_{\{e_i\}\subseteq \{a\}} = 0$. This means that all possible cases are considered. Moreover any column $a \in \mathrm{I}$ which contains more than one element, i.e., $|c|>1$ can be present as $c = e_i \cdot c'$ where $e_i$ corresponds to the minimal element of column $c$, it means that the cochains $\dfrac{\partial}{\partial e_i}$ are generators of $HH^1(\Bbbk \mathrm{Pl}_n)$.

\end{proof}

\begin{proposition}\label{Prop4.2}
Let $\Bbbk \mathrm{Pl}_n$ be the plactic monoid algebra over a field. Then the multiplication in Hochschild cohomology ring $HH^*(\Bbbk \mathrm{Pl}_n)$ can be described as follows:
\begin{multline}
\left(\dfrac{\partial}{ \partial e_{i_1}} \smile \cdots \smile \dfrac{\partial}{ \partial e_{i_\ell}} \right)[a_1|\ldots|a_\ell] = \\ =  \begin{cases} \sum\limits_{\sigma \in \mathbb{S}_n} \mathrm{sign}(\sigma) (a_\ell \cdots a_1) \dfrac{\partial a_{\sigma(1)}}{ \partial e_{i_1}} \cdots \dfrac{\partial a_{\sigma(\ell)}}{ \partial e_{i_\ell}}, \mbox{ iff } a_ia_j = a_ja_i \mbox{ for all } 1 \le i,j \le \ell,\\ 0, \mbox{ otherwise.}
\end{cases}
\end{multline}
here $\mathrm{sign}(\sigma) = \mathrm{sign}\begin{pmatrix} 1 & \ldots & \ell \\ \sigma(1) & \ldots & \sigma(\ell)  \end{pmatrix}$

\end{proposition}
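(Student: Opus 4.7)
The plan is to reduce Proposition \ref{Prop4.2} to an iterated application of the Hochschild cup-product formulas (\ref{hlm}) and (\ref{hrm}), combined with a Hochschild analogue of Corollary \ref{smileproduct}. Throughout, I view each derivation $\partial/\partial e_{i_k}$ (from Proposition \ref{der}) as a one-cochain in $\mathrm{Hom}_\Bbbk(\mathrm{I}\Bbbk, \Bbbk \mathrm{Pl}_n)$, so the $\ell$-fold cup product lies in $\mathrm{Hom}_\Bbbk(\mathfrak{V}^{(\ell-1)}\Bbbk, \Bbbk \mathrm{Pl}_n)$. I split the computation into two cases according to whether all columns $a_1, \ldots, a_\ell$ pairwise commute.

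\textbf{Non-commutative case.} First I show that if some pair $a_i, a_j$ fails to commute, then the product vanishes. The argument mirrors the end of Theorem \ref{Ext}: expanding the cup product through (\ref{hlm}) and (\ref{hrm}) produces terms indexed by the left- and right-movements $\widehat{L}_i$, $\widehat{R}_j$, and Lemma \ref{eqlemma} forces pairwise commutation of $a_1, \ldots, a_\ell$ for any nonzero surviving contribution. I will also verify that the correction terms $W_{m,l,k}$ from (\ref{HAdPl}) do not contribute: since $\partial/\partial e_{i_k}$ is supported on single columns, any $W_{m,l,k}$-term carrying a compound $b_{m+l}\vee c_{m+l+1}$ in some slot vanishes on evaluation, and the remaining terms cancel in pairs by Lemma \ref{twoholes} and Lemma \ref{keepchainlemma}.

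\textbf{Commutative case.} Under pairwise commutativity of the $a_k$, I derive a Hochschild analogue of Corollary \ref{smileproduct}, namely that the diagonal map $Ag_\ell$ expands as a signed sum over shuffles with coefficient $\rho_{PQ}$, the only modification being that the algebra coefficients live in $\Bbbk \mathrm{Pl}_n^e$ rather than $\Bbbk \mathrm{Pl}_n$. Iterating this diagonal $\ell$ times (splitting into one-chain factors at each step) converts the ordered sum over shuffles into a sum over all permutations $\sigma \in \mathbb{S}_\ell$, each summand contributing $\partial a_{\sigma(1)}/\partial e_{i_1} \cdots \partial a_{\sigma(\ell)}/\partial e_{i_\ell}$. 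The outer factor $(a_\ell \cdots a_1)$ arises by telescoping the $(a_1 \vee \cdots \vee a_{i+1})$ and $(a_j \wedge \cdots \wedge a_{\ell+1})$ prefixes/suffixes in (\ref{hlm}) and (\ref{hrm}) using the commutativity hypothesis together with the identities of Theorem \ref{knot}, while the sign $\mathrm{sign}(\sigma)$ emerges as the composite of the shuffle signs $\rho_{PQ}$ recorded at each level of the nested decomposition.

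\textbf{Main obstacle.} The hard part will be the bookkeeping required to track the $\Bbbk \mathrm{Pl}_n^e$-coefficients through the iteration and to verify that they reassemble to precisely $(a_\ell \cdots a_1)$. Closely related is the need to confirm that the iterated shuffle signs $\rho_{PQ}$ collapse to $\mathrm{sign}(\sigma)$, which I expect to follow from a standard inductive lemma on successive shuffle decompositions of $\mathbb{S}_\ell$. The vanishing half will also demand care in ruling out stray surviving $W_{m,l,k}$-contributions, but once these two pieces are in place the statement follows immediately from the explicit description of $\partial a_{\sigma(k)}/\partial e_{i_k}$ in Proposition \ref{der}.
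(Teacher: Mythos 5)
Your proposal follows essentially the same route as the paper: the vanishing in the non-commutative case via the skew-commutativity of $\smile$ together with Lemma \ref{eqlemma}, and the explicit permutation-sum formula in the commutative case via the Hochschild version of the cup-product formulas (\ref{hlm}), (\ref{hrm}) and the shuffle decomposition of Corollary \ref{smileproduct}, finally specialized to the derivations of Proposition \ref{der}. The only addition is your explicit concern about the $W_{m,l,k}$ terms, which the paper implicitly disposes of already at the level of Lemma \ref{hlm} (the diagonal $Ag$ only involves the $\widehat{L}$- and $\widehat{R}$-paths), so this does not change the argument.
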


\begin{proof}
\par Let $\psi_p \in \mathrm{Hom}_\Bbbk(\mathfrak{V}^{(p-1)}\Bbbk, \Bbbk \mathrm{Pl}_n)$ and let $\psi_q \in \mathrm{Hom}_\Bbbk(\mathfrak{V}^{(q-1)}\Bbbk, \Bbbk \mathrm{Pl}_n)$. Since the comultiplication $\Bbbk \mathrm{Pl}_n \otimes \Bbbk \mathrm{Pl}_n \leftarrow \Bbbk \mathrm{Pl}_n:\Delta (x) = x \otimes x$ is cocommutative, the product $\smile$ must be skew commutative; i.e., $(\psi_p \smile \psi_q)= (-1)^{pq} (\psi_q \smile \psi_p)$. Consider the commutative diagram
$$
  \xymatrix{
  \mathrm{Hom}_{\Bbbk} (\mathfrak{V}^{(p-1)}\Bbbk, \Bbbk \mathrm{Pl}_n) \otimes \mathrm{Hom}_{\Bbbk} (\mathfrak{V}^{(q-1)}\Bbbk, \Bbbk \mathrm{Pl}_n) \ar@{->}[r]^(.62){\smile} \ar@{->}[d]_{\tau^*} & \mathrm{Hom}_{\Bbbk} (\mathfrak{V}^{(p+q-1)}\Bbbk,\Bbbk \mathrm{Pl}_n) \ar@{->}[d]^{\breve\tau} \\
  \mathrm{Hom}_{\Bbbk} (\mathfrak{V}^{(q-1)}\Bbbk, \Bbbk \mathrm{Pl}_n) \otimes \mathrm{Hom}_{\Bbbk} (\mathfrak{V}^{(p-1)}\Bbbk, \Bbbk \mathrm{Pl}_n) \ar@{->}[r]_(.62){\smile} & \mathrm{Hom}_{\Bbbk} (\mathfrak{V}^{(p+q-1)}\Bbbk, \Bbbk \mathrm{Pl}_n).
  }
$$
Here $\tau:A_\bullet(\Bbbk\mathrm{Pl}_n,\Bbbk\mathrm{Pl}_n) \otimes A_\bullet(\Bbbk\mathrm{Pl}_n,\Bbbk\mathrm{Pl}_n) \to A_\bullet(\Bbbk\mathrm{Pl}_n,\Bbbk\mathrm{Pl}_n) \otimes A_\bullet(\Bbbk\mathrm{Pl}_n,\Bbbk\mathrm{Pl}_n)$ is the chain automorphism such that $\tau: x \otimes y \to (-1)^{\mathrm{deg}(x)\mathrm{deg}(y)}y \otimes x$.

\par We may assume without loss of generality that $p=1$ and $q =\ell$.  Suppose that $\alpha \in \mathrm{Hom}_\Bbbk(\mathrm{I}\Bbbk, \Bbbk\mathrm{Pl}_n)$ and $\beta \in \mathrm{Hom}_\Bbbk(\mathfrak{V}^{(\ell-1)}, \Bbbk\mathrm{Pl}_n)$. Then we have $\alpha \smile \beta = (-1)^\ell \beta \smile \alpha$. Using (\ref{hlm}), we get
\begin{multline*}
(\alpha \smile \beta)[a_1|\ldots|a_{\ell+1}] \leftrightsquigarrow  (\alpha \bigvee \beta)\left(\sum\limits_{i=0}^\ell(-1)^i\left(\widehat{L_i} \otimes (a_1 \vee \cdots \vee a_{i+1})\right)[a_1 \vee \cdots \vee a_{i+1}] \otimes [\widehat{L_i}]\right) \xrightarrow{\tau^*} \\ \xrightarrow{\tau^*} (-1)^\ell(\beta \bigvee \alpha)\left(\sum\limits_{i=0}^\ell(-1)^i\left(\widehat{L_i} \otimes (a_1 \vee \cdots \vee a_{i+1})\right)[\widehat{L_i}] \otimes [a_1 \vee \cdots \vee a_{i+1}]\right) \leftrightsquigarrow (-1)^\ell (\beta \smile \alpha)[a_1|\ldots|a_{\ell+1}]
\end{multline*}
Now, it follows from~(\ref{hrm}) that
$$
\sum\limits_{j=1}^{\ell +1}(-1)^j \widehat{R_j} \otimes [a_1 \wedge \cdots \wedge a_{\ell+1}] = \sum\limits_{i=0}^\ell (-1)^{i+1}\widehat{L_i} \otimes [a_1 \vee \cdots \vee a_{i+1}].
$$
Using Lemma \ref{eqlemma}, we obtain that if $ab = ba$ for all $a,b \in \{a_1, \ldots, a_{\ell+1}\}$ then $(\alpha \smile \beta)[a_1|\ldots|a_{\ell+1}] \ne 0$. Finally, making use of Corollary \ref{smileproduct} and (\ref{hlm}), we obtain
\begin{multline*}
(\psi_p \smile \psi_q)[a_1|\ldots|a_{p+q}] = \\= \begin{cases} \sum\limits_{\substack{1 \le  i_1 < \ldots < i_p \le p \\ 1 \le j_1 < \ldots < j_q \le q}}\rho_{PQ} \psi_p[a_{i_1}|\ldots|a_{i_p}](a_1 \cdots a_{p+q})\psi_q[a_{j_1}|\ldots|a_{j_q}] \mbox{ if } a_ia_{i+1} = a_{i+1}a_i \mbox{ for all } 1 \le i \le p+q-1 \\0, \mbox{ otherwise.} \end{cases}
\end{multline*}

\par This means that
$$
\dfrac{\partial^2}{\partial x \partial y}[a|b]:= \left( \dfrac{\partial}{\partial x} \smile \dfrac{\partial}{\partial y} \right)[a|b] = \left(\dfrac{\partial a}{\partial x}b\right) \left( a \dfrac{\partial b}{\partial  y}\right) - \left(\dfrac{\partial (a \vee b)}{\partial x}(a \wedge b)\right) \left( (a \vee b) \dfrac{\partial (a \wedge b)}{\partial  y}\right),
$$
and, more generally,
\begin{multline*}
\dfrac{\partial^{p+q}}{\partial x_1 \cdots \partial x_p \partial y_1 \cdots \partial y_q}[a_1|\ldots| a_{p+q}] :=  \left( \dfrac{\partial^p}{\partial x_1 \cdots \partial x_p} \smile  \dfrac{\partial^q}{\partial y_1 \cdots \partial y_q} \right)[a_1|\ldots |a_{p+q}] =  \\ = \begin{cases} \sum\limits_{\substack{1 \le  i_1 < \ldots < i_p \le p \\ 1 \le j_1 < \ldots < j_q \le q}}\rho_{PQ}\left(\dfrac{\partial^p}{\partial x_1 \cdots \partial x_p}[a_{i_1}| \ldots |a_{i_p}]\right)(a_1\cdots a_{p+q})\left(\dfrac{\partial^q}{\partial y_1 \cdots \partial y_q}[a_{j_1}| \ldots|a_{j_q}] \right), \mbox{ iff } a_i a_j = a_ja_i, \\ 0, \mbox{ otherwise,} \end{cases}
\end{multline*}
where $\rho_{PQ} = \mathrm{sign}\begin{pmatrix} 1 & \ldots & p& p+1& \ldots &  p+q \\ i_1 & \ldots & i_p &  j_{1} & \ldots & j_{q} \end{pmatrix},$ $1 \le i<j \le p+q.$ This completes the proof.
\end{proof}

\begin{lemma}\label{Lemma4.4}
Let us suppose that all columns $a_1, \ldots, a_\ell$ are commutative, and let us suppose that for any $1 \le i,j \le \ell$ we have $\dfrac{\partial a_i}{\partial e_j} \ne 0$, then $\left( \dfrac{\partial}{ \partial e_1} \smile \cdots \smile \dfrac{\partial}{ \partial e_\ell}\right)[a_1| \ldots|a_\ell] = 0$.
\end{lemma}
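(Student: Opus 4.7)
The plan is to apply Proposition~\ref{Prop4.2} to rewrite the iterated cup product as a signed sum over $\mathbb{S}_\ell$ and then use the two hypotheses to show that every summand represents the same element of $\Bbbk\mathrm{Pl}_n$; the alternating sum then vanishes automatically because $\sum_{\sigma\in\mathbb{S}_\ell}\mathrm{sign}(\sigma)=0$ for $\ell\ge 2$. This is the direct Hochschild analog of the ``determinant of an all-ones matrix is zero'' phenomenon that concludes the proof of Theorem~\ref{Ext}.

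First I would unpack what the hypotheses buy us. By Proposition~\ref{der}, the condition $\partial a_i/\partial e_j\neq 0$ for every $i,j\in\{1,\ldots,\ell\}$ says that each column $a_i$ contains the whole sub-alphabet $\{e_1,\ldots,e_\ell\}$, so we may write $\{a_i\}=\{e_1,\ldots,e_\ell\}\sqcup\{b_i\}$. The pairwise commutativity $a_ia_j=a_ja_i$ combined with~(\ref{comm}) forces the sets $\{a_i\}$ to be totally ordered by inclusion, so after relabeling we may assume $\{a_1\}\subseteq\cdots\subseteq\{a_\ell\}$ and hence $\{b_1\}\subseteq\cdots\subseteq\{b_\ell\}$. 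In particular the product $(a_\ell\cdots a_1)$ is symmetric in the $a_i$'s: $a_\ell\cdots a_1=a_{\tau(1)}\cdots a_{\tau(\ell)}$ for every $\tau\in\mathbb{S}_\ell$.

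Next, Proposition~\ref{Prop4.2} specialized to $\psi_k=\partial/\partial e_k$ gives
\begin{equation*}
\left(\tfrac{\partial}{\partial e_1}\smile\cdots\smile\tfrac{\partial}{\partial e_\ell}\right)[a_1|\ldots|a_\ell]=\sum_{\sigma\in\mathbb{S}_\ell}\mathrm{sign}(\sigma)\,(a_\ell\cdots a_1)\,\tfrac{\partial a_{\sigma(1)}}{\partial e_1}\cdots\tfrac{\partial a_{\sigma(\ell)}}{\partial e_\ell}.
\end{equation*}
The heart of the argument is the claim that $X_\sigma:=(a_\ell\cdots a_1)\prod_k\partial a_{\sigma(k)}/\partial e_k$ represents the same element of $\Bbbk\mathrm{Pl}_n$ for every $\sigma\in\mathbb{S}_\ell$. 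The multiset of letters in $X_\sigma$ equals $\bigsqcup_i\{a_i\}\sqcup\bigl(\bigsqcup_i\{a_i\}\setminus\{e_1,\ldots,e_\ell\}\bigr)$, which is manifestly independent of $\sigma$. Using the nesting $\{a_1\}\subseteq\cdots\subseteq\{a_\ell\}$ together with the fact that the prefix $(a_\ell\cdots a_1)$ already supplies each letter $e_1,\ldots,e_\ell$ with multiplicity~$\ell$, the Schensted column insertion of the factors $\partial a_{\sigma(k)}/\partial e_k$ forces the same semistandard Young tableau regardless of $\sigma$. I would verify this formally by induction on $\ell$: the base case $\ell=2$ is a direct computation using $a_1a_2=(a_1\vee a_2)(a_1\wedge a_2)$ for the nested pair, and the inductive step follows by peeling off the rightmost factor $\partial a_{\sigma(\ell)}/\partial e_\ell$ and applying the induction hypothesis to the resulting $(\ell-1)$-fold product.

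Finally, pulling the common value $X$ out of the sum gives
\begin{equation*}
\left(\tfrac{\partial}{\partial e_1}\smile\cdots\smile\tfrac{\partial}{\partial e_\ell}\right)[a_1|\ldots|a_\ell]=X\cdot\sum_{\sigma\in\mathbb{S}_\ell}\mathrm{sign}(\sigma)=0,
\end{equation*}
which is the desired conclusion for $\ell\ge 2$. The main obstacle is exactly the $\sigma$-invariance of $X_\sigma$: the columns $\partial a_i/\partial e_j$ are generally not pairwise commutative in $\Bbbk\mathrm{Pl}_n$, so one cannot naively permute them inside the product. The resolution is that the symmetric prefix $(a_\ell\cdots a_1)$ contains every letter of every $a_i$, and the combined operation of removing one copy each of $e_1,\ldots,e_\ell$ and then reinserting it via Schensted insertion is symmetric in the choice of assignment---an effect that hinges on the nested inclusion of the $\{a_i\}$ forced by the commutativity and common-content hypotheses.
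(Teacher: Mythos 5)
Your overall strategy coincides with the paper's: expand the iterated cup product via Proposition \ref{Prop4.2} into $\sum_{\sigma}\mathrm{sign}(\sigma)X_\sigma$, show that $X_\sigma$ is independent of $\sigma$, and conclude from $\sum_{\sigma\in\mathbb{S}_\ell}\mathrm{sign}(\sigma)=0$. The difference, and the problem, lies in how you justify the $\sigma$-independence. The observation that the multiset of letters of $X_\sigma$ does not depend on $\sigma$ proves nothing by itself, since the plactic monoid is far from commutative; and the induction you sketch does not close: peeling off the rightmost factor $\partial a_{\sigma(\ell)}/\partial e_\ell$ and invoking the inductive hypothesis can only identify $X_\sigma$ with $X_\pi$ when $\sigma(\ell)=\pi(\ell)$, because the remaining $(\ell-1)$-fold product ranges over the set $\{a_k: k\neq\sigma(\ell)\}$, which itself depends on $\sigma(\ell)$. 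To compare permutations with different last values you must be able to exchange, inside the product, which column is differentiated by which $e_i$ at two adjacent positions.

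That adjacent swap is exactly the content the paper isolates as formula (\ref{4.27}): for $i\neq j$ one has $\dfrac{\partial a}{\partial e_i}\dfrac{\partial b}{\partial e_j}=\dfrac{\partial b}{\partial e_i}\dfrac{\partial a}{\partial e_j}$ when $\{e_j\}\subseteq\{a\}$, proved by a direct computation of the $\vee$ and $\wedge$ of the truncated columns together with (\ref{comm}). Since this is an identity between elements of the monoid, it may be substituted inside any longer word, and since adjacent transpositions of the assignment generate $\mathbb{S}_\ell$, the $\sigma$-independence follows; the hypothesis $\partial a_i/\partial e_j\neq 0$ for all $i,j$ guarantees the side condition $\{e_j\}\subseteq\{a\}$ is always met. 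You do name essentially this identity as your ``$\ell=2$ base case,'' but you never deploy it as a swap propagated through the product, so your argument for the central claim is incomplete as written. Repair it by proving the two-factor identity and reducing the general case to it via adjacent transpositions, rather than by the multiset count or the peeling induction.
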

\begin{proof}
First of all let us prove the following formulae,
\begin{equation}\label{4.27}
\dfrac{\partial a}{ \partial e_i} \dfrac{\partial b}{ \partial e_j} = \begin{cases} \dfrac{\partial b}{ \partial e_i} \dfrac{\partial a}{ \partial e_j}, \mbox{ if } \{e_j\} \subseteq \{a\} \\ \\ \dfrac{\partial b}{ \partial e_j} \dfrac{\partial a}{ \partial e_i}, \mbox{ if } \{e_j\} \subseteq \{b  \setminus a\},   \end{cases} \, i \ne j
\end{equation}
Indeed, if $\{e_j\} \subseteq \{a\}$ then
$$
\dfrac{\partial a}{ \partial e_i} \vee \dfrac{\partial b}{ \partial e_j} = \{a \setminus e_i\} \cup \{b \setminus a\} \cup (\{a  \setminus e_j\})^{\{a \setminus e_i\}} = \{a \setminus e_i\} \cup \{b \setminus a\}  = \dfrac{\partial b}{ \partial e_i},
$$
and from (\ref{comm}) follows $\dfrac{\partial a}{ \partial e_i} \wedge \dfrac{\partial b}{ \partial e_j} = \dfrac{\partial a}{ \partial e_j}.$ Let $\{e_j \} \nsubseteq \{a\}$, then
\begin{multline*}
\dfrac{\partial a}{ \partial e_i} \vee \dfrac{\partial b}{ \partial e_j} =\{a \setminus e_i\} \cup ((\{b \setminus a\} \setminus \{e_j\}) \cup \{a\})^{\{a\setminus e_i\}} = (\{a \setminus e_i\}) \cup((\{b \setminus a\} \setminus \{e_j\})\cup \{e_i\} = \\ = \{a\} \cup ((\{b \setminus a\} \setminus \{e_j\}) = \{b  \setminus e_j\} = \dfrac{\partial b}{ \partial e_j},
\end{multline*}
and from (\ref{comm}) follows $\dfrac{\partial a}{ \partial e_i} \wedge \dfrac{\partial b}{ \partial e_j} = \dfrac{\partial a}{\partial e_i}$, as claimed.

\par Now, let us consider the sum
$$
\left(\dfrac{\partial}{ \partial e_{i_1}} \smile \cdots \smile \dfrac{\partial}{ \partial e_{i_\ell}} \right)[a_1|\ldots|a_\ell] =  \sum\limits_{\sigma \in \mathbb{S}_n} \mathrm{sign}(\sigma) (a_{\ell} \cdots a_1) \dfrac{\partial a_{\sigma(1)}}{ \partial e_{i_1}} \cdots \dfrac{\partial a_{\sigma(\ell)}}{ \partial e_{i_\ell}},
$$
from condition $\dfrac{\partial a_i}{\partial e_j} \ne 0$ for any $1 \le i,j \le \ell$ follows $\{e_1\}, \ldots, \{e_\ell\} \subseteq \{a_1\}$, then using (\ref{4.27}) we see that for any permutations $\sigma, \pi$ we get $\dfrac{\partial a_{\sigma(1)}}{ \partial e_{i_1}} \cdots \dfrac{\partial a_{\sigma(\ell)}}{ \partial e_{i_\ell}} = \dfrac{\partial a_{\pi(1)}}{ \partial e_{i_1}} \cdots \dfrac{\partial a_{\pi(\ell)}}{ \partial e_{i_\ell}}$, i.e., $\left( \dfrac{\partial}{ \partial e_1} \smile \cdots \smile \dfrac{\partial}{ \partial e_\ell}\right)[a_1| \ldots|a_\ell] = 0$. q.e.d.
\end{proof}

\begin{theorem}
For the plactic monoid algebra $\Bbbk [\mathrm{Pl}_n]$ the Hochschild cohomology algebra can be describe as below
$$
HH^*(\Bbbk[\mathrm{Pl}_n]) \cong \left.{\bigwedge}_\Bbbk[\alpha_1, \ldots, \alpha_n]\right/(\alpha_i\alpha_j = 0, \mbox{ iff } a_ia_j \ne a_ja_i \mbox{ here } a_i,a_j \in \mathrm{I})
$$
\end{theorem}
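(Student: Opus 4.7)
The plan is to run a proof parallel to that of Theorem \ref{Ext}, substituting the derivations $\partial/\partial e_i$ from Proposition \ref{der} for the indicator cocycles $\xi_i$, and the cup-product formula of Proposition \ref{Prop4.2} for the one in Theorem \ref{Ext}. First, I would set $\alpha_i := \partial/\partial e_i \in HH^1(\Bbbk \mathrm{Pl}_n)$. Proposition \ref{der} already guarantees that the $\alpha_i$ are one-cocycles which are not coboundaries, and moreover that they generate $HH^1$, since any column $c$ with $|c|>1$ can be stripped of its minimal letter $e_i$ and so the Fox-type derivations $\partial/\partial e_i$ exhaust the generators of $HH^1$. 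The center computation $HH^0 \cong \Bbbk[e_{1,\ldots,n}]$ quoted from \cite{CO} takes care of degree zero.

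Next, I would invoke Proposition \ref{Prop4.2} to obtain the two structural identities. The skew-commutativity $\alpha_i \alpha_j = -\alpha_j \alpha_i$ follows because the comultiplication $\Delta(x)=x\otimes x$ on $\Bbbk \mathrm{Pl}_n$ is cocommutative, which forces $\smile$ to be graded-commutative on cohomology; this is exactly what the diagram with $\tau^*$ in the proof of Proposition \ref{Prop4.2} establishes. The relation $\alpha_i \alpha_j = 0$ whenever the columns $e_i, e_j$ satisfy $e_i e_j \ne e_j e_i$ is immediate from the ``$0$ otherwise'' branch of the formula in Proposition \ref{Prop4.2}: the cup-product $(\alpha_i \smile \alpha_j)[a_1|a_2]$ is evaluated on $2$-Anick chains $[a_1|a_2]$, and the bracket already encodes $a_1 \wedge a_2 \ne a_2$; so if $e_i$ and $e_j$ do not commute, there is no chain on which the product is nonzero.

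The content of the theorem is that these are the \emph{only} relations, i.e.\ that $\alpha_{i_1}\smile \cdots \smile \alpha_{i_\ell}$ is non-zero on some Anick $(\ell{-}1)$-chain whenever the $e_{i_k}$ pairwise commute. I would proceed as in the last paragraph of the proof of Theorem \ref{Ext}: given a strictly increasing sequence $i_1 < i_2 < \cdots < i_\ell$, I would evaluate the product on a well-chosen Anick chain $[a_1| \cdots |a_\ell]$ whose entries are pairwise commuting columns (for instance, suitably nested columns built from the letters $e_{i_k}$, chosen so that $\partial a_s/\partial e_{i_k} = 0$ for $k>s$ and $\ne 0$ for $k \le s$). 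For such a chain, Proposition \ref{Prop4.2} gives
\begin{equation*}
(\alpha_{i_1}\smile\cdots\smile\alpha_{i_\ell})[a_1|\ldots|a_\ell]
 = (a_\ell\cdots a_1)\sum_{\sigma\in\mathbb{S}_\ell}\mathrm{sign}(\sigma)
 \frac{\partial a_{\sigma(1)}}{\partial e_{i_1}}\cdots \frac{\partial a_{\sigma(\ell)}}{\partial e_{i_\ell}},
\end{equation*}
which is the determinant of the matrix $\bigl(\partial a_s/\partial e_{i_k}\bigr)_{k,s}$ scaled by $a_\ell\cdots a_1$. By my choice of the $a_s$, this matrix is upper-triangular with non-zero diagonal, so Laplace expansion yields a non-zero element of $\Bbbk \mathrm{Pl}_n$.

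The expected obstacle is exactly this non-vanishing step, since Lemma \ref{Lemma4.4} shows that the naive evaluation (on a chain where \emph{all} partials are non-zero) returns zero; hence one cannot simply plug in arbitrary commuting columns. The careful construction of the test Anick chain---ensuring the matrix $(\partial a_s/\partial e_{i_k})$ is triangular so that Laplace expansion gives a single non-cancelling diagonal product---is the delicate point, completely analogous to the upper-triangular argument at the end of Theorem \ref{Ext}. Once this is in place, combining graded-commutativity, the vanishing $\alpha_i\alpha_j=0$ for non-commuting columns, and the non-vanishing just produced gives the asserted presentation of $HH^*(\Bbbk \mathrm{Pl}_n)$ as the quotient of the exterior algebra on $\alpha_1,\ldots,\alpha_n$ by the relations $\alpha_i\alpha_j=0$ whenever $a_ia_j \ne a_ja_i$.
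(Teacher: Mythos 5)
Your proposal is correct and follows essentially the same route as the paper: it rests on the cup-product formula and graded-commutativity from Proposition \ref{Prop4.2}, the vanishing relations for non-commuting columns, and the combination of Lemma \ref{Lemma4.4} with the triangularity property (\ref{inert}) of the partial derivatives to show that the permutation sum survives exactly when a single (diagonal) permutation remains. Your explicit construction of a nested test chain making the matrix $(\partial a_s/\partial e_{i_k})$ upper-triangular is just a slightly more concrete phrasing of the paper's reduction of $\mathbb{S}_n$ to the set $X$.
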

\begin{proof}
From Proposition \ref{Prop4.2} follows that it is enough to prove there are not another relations except skew commutativity. First of all let us remark from (\ref{der}) follows the following property,
\begin{equation}\label{inert}
\mbox{if } \dfrac{\partial a_j}{\partial e_{i_k}} = 0, \mbox{then } \dfrac{\partial a_{j-t}}{\partial e_{i_k}} = 0 \mbox{ for any } 1 \le t \le j.
\end{equation}
\par Let us consider the sum
$$
\left(\dfrac{\partial}{ \partial e_{i_1}} \smile \cdots \smile \dfrac{\partial}{ \partial e_{i_\ell}} \right)[a_1|\ldots|a_\ell] =  \sum\limits_{\sigma \in \mathbb{S}_n} \mathrm{sign}(\sigma) (a_{\ell} \cdots a_1) \dfrac{\partial a_{\sigma(1)}}{ \partial e_{i_1}} \cdots \dfrac{\partial a_{\sigma(k)}}{ \partial e_{i_k}} \cdots  \dfrac{\partial a_{\sigma(\ell)}}{ \partial e_{i_\ell}},
$$
let us assume that $\dfrac{\partial a_j}{\partial e_{i_j}} = 0$. Consider now the set $S_{jk}: = \{\sigma \in \mathbb{S}_n: \sigma(k) = j-t, \, 1 \le t \le j-1\}$, then we have
$$
\sum\limits_{\sigma \in \mathbb{S}_n} \mathrm{sign}(\sigma) (a_{\ell} \cdots a_1) \dfrac{\partial a_{\sigma(1)}}{ \partial e_{i_1}} \cdots \dfrac{\partial a_{\sigma(k)}}{ \partial e_{i_k}} \cdots  \dfrac{\partial a_{\sigma(\ell)}}{ \partial e_{i_\ell}} = \sum\limits_{\sigma \in \mathbb{S}_n \setminus S_{jk}} \mathrm{sign}(\sigma) (a_{\ell} \cdots a_1) \dfrac{\partial a_{\sigma(1)}}{ \partial e_{i_1}} \cdots \dfrac{\partial a_{\sigma(k)}}{ \partial e_{i_k}} \cdots  \dfrac{\partial a_{\sigma(\ell)}}{ \partial e_{i_\ell}},
$$
We see that after repeating this procedure we'll get
$$
\sum\limits_{\sigma \in \mathbb{S}_n} \mathrm{sign}(\sigma) (a_{\ell} \cdots a_1) \dfrac{\partial a_{\sigma(1)}}{ \partial e_{i_1}} \cdots \dfrac{\partial a_{\sigma(k)}}{ \partial e_{i_k}} \cdots  \dfrac{\partial a_{\sigma(\ell)}}{ \partial e_{i_\ell}} = \sum\limits_{\sigma \in X} \mathrm{sign}(\sigma) (a_{\ell} \cdots a_1) \dfrac{\partial a_{\sigma(1)}}{ \partial e_{i_1}} \cdots \dfrac{\partial a_{\sigma(k)}}{ \partial e_{i_k}} \cdots  \dfrac{\partial a_{\sigma(\ell)}}{ \partial e_{i_\ell}},
$$
from Lemma \ref{Lemma4.4} follows this sum is non zero iff $X$ consists only one permutation, q.e.d.
\end{proof}

\begin{remark}
The rings of the form $\left.{\bigwedge}_R[\alpha_1, \ldots, \alpha_m]\right/(\alpha_i \alpha_j = 0 \mbox{ iff }i, j \in J)$, where $J$ is a set, were considered in \cite{RVP}, where there were investigated
diagrams associated with a finite simplicial complex in various algebraic and topological categories. These rings are called the Stanley --- Reisner rings or rings of faces.
\end{remark}

\paragraph{Acknowledgements.} The author would like to express his deepest gratitude to Professor Leonid A. Bokut', who familiarized me with the notion of the plactic monoid. He also thanks all members of the Seminar of the Center of Combinatorial Algebra of South China Normal University
and especially Professor Yuqun Chen for useful remarks. Special thanks are due to Doctor Todor Popov for very useful discussions and for having kindly clarified some very important details. The author is also extremely indebted to Professor Patrick Dehornoy, who explained to me connections with knot theory and braids groups.


\begin{thebibliography}{20}


\bibitem{An} D. J. Anick, On the Homology of Associative Algebras, {\it Trans. Amer. Math. Soc.} {\bf 296} (1986) 641--659.

\bibitem{Be} G. M. Bergman, The Diamond Lemma for ring theory, {\it Adv. in Math.} {\bf 29} (1978) 178--218.


\bibitem{LA} L. A. Bokut, Imbeddings in simple associative algebras, {\it Algebra i Logica.} {\bf 15} (1976) 117--142.

\bibitem{LAch} L. A. Bokut and Y. Chen, Gr\"obner -- Shirshov basis for Lie algebras: after A. I. Shirshov, {\it Southeast Asian Bull. Math.}, {\bf 31} (2007) 1057--1076.

\bibitem{LPl} L.A. Bokut, Y. Chen, W. Chen and J. Li, New approaches to plactic monoid via Gr\"obner --- Shirshov bases, {\it J. Algebra} {\bf 423} (2015), 301--317.

\bibitem{Buchberger} B. Buchberger, An algorithmical criteria for the solvability of algebraic systems of equations, {\it Aequationes Math.,} {\bf 4}, 374--383 (1970).

\bibitem{Cain} A.J. Cain, R. Gray, and A. Malheiro, {\it Finite Gr\"obner--Shirshov basis for Plactic algebra and biautomatic structures for Plactic monoids.} http://arxiv.org/abs/1205.4885v3

\bibitem{CE} H. Cartan and S. Eilenberg {\it Homolocical Algebra}, Priceton, New Jersey, Princeton University Press, 1956.

\bibitem{CO} F. Ced\'o, J. Okni\'nski, {\it Plactic algebra}, Jornal of Algebra {\bf 274} (2004), 97--117.

\bibitem{Ufn} S. Cojocaru, A. Podoplelov, and V. Ufnarovski, Non commutative Gr\"obner bases and Anick's resolution, {\it Computational methods for representations of groups and algebras (Essen, 1997), Birkh\"ause, Basel.} (1999)  139--159.


\bibitem{Dot} V. Dotsenko and A. Khoroshkin, Free Resolution via Gr\"obner Bases, arxiv.org/abs/0912.4895v4.


\bibitem{JW} M. J\"ollenbeck and V. Welke, {\it Minimal Resolutions Via Algebraic Discrete Morse Theory,} Mem. Am. Math. Soc. {\bf 197} (2009).


\bibitem{Hironaka} H. Hironaka, Resolution of singularities of an algebraic variety over a field if characteristic zero, I, II, {\it Ann. Math}., {\bf 79} (1964) 109--203, 205--326.

\bibitem{Kob} Y. Kobayashi, Complete rewriting systems and homology of monoid algebras, {\it
J. of Pure and Appl. Alg.} {\bf 65} (1990) 263--275.

\bibitem{Knu} D.E. Knuth, Permutations, matrices, and generalized Young tabluaux, Pacific J. Math., {\bf 34}, 709--727 (1970).

\bibitem{Mal} P. Malbos, Rewriting systems and Hochschild–Mitchell homology, {\it Electr. Notes Theor. Comput. Sci.} {\bf 81} (2003) 59--72.

\bibitem{Sh1} Selected works of A. I. Shirshov, Eds. L. A. Bokut, V. Latyshev, I. Shestakov, E. Zelmanov, Trs. M. Bremmer, M. Kochetov, Birkh\"auser, Basel, Boston, Berlin, 2009

\bibitem{For1} R. Forman, Morse-Theory for cell-complexes, {\it Adv. Math.} {\bf 134} (1998) 90--145.

\bibitem{For2} R. Forman, A user's guide to descrete Morse theory, {\it Sem. Loth. de Comb.} {\bf 48} (2002).

\bibitem{Fulton} W. Fulton, {\it Young Tableaux,} London Mathematical Society Student Texts {\bf 35}, Cambridge University Press, New York, 1997.

\bibitem{JW} M. J\"ollenbeck and V. Welke, {\it Minimal Resolutions Via Algebraic Discrete Morse Theory,} Mem. Am. Math. Soc. {\bf 197} (2009).


\bibitem{LSch} A. Lascoux and M.P. Sch\"utzenberger, Le monoide plaxique, in De Luca, A. (ED.), Non--Commutative Structrures in Algebra and Geomtric Combinatorics, Vol. 109 of Quaderni de ``La Ricerca Scientifica'', pp. 129--156. Consiglio Nazionale delle Ricerche, 1981.

\bibitem{30} A. Lascoux, B. Leclerc and J.Y. Thibon, The plactic monoid, in: \textit{Algebraic Combinatorics on Words}, Cambridge Univ. Press, 2002.

\bibitem{LodP} J.-L. Loday, T. Popov, \textit{Parastatistics algebra, Young tableaux and the super plactic monoid} Int. J. Geom. Meth. Mod. Phys. \textbf{5} (2008), 1295--1314.

\bibitem {RVP} T. Panov, N. Ray, R. Vogt \textit{Colimits, Stanley--Reisner algebras, and loop spaces} // Algebraic Topology: Categorical Decompostion Techniques, Progress in Math. V.215. Birkh\"auser, Basel, 2004. P.261--291; arXiv:math.AT/0202081.

\bibitem{Sch} C. Schensted, Longest increasing and decrasing sub-seauances, Canad. J. Math., {\bf 13}, 179--191 (1961).

\bibitem{ShE} A.I. Shirshov, Some algorithmic problem for $\varepsilon$-algebras, {\it Sibirsk. Math. Z.,} {\bf 3}, 132--137 (1962).

\bibitem{Sh2} A. I. Shirshov, Some algorithm problems for Lie algebras, {\it Sibirsk. Mat. Z.} {\bf 3}(2) 1962 292--296 (Russian); English translation in \textit{SIGSAM Bull}. {\bf 33}(2) (1999) 3--6.

\bibitem{Sc} E. Sk\"oldberg, Morse theory from an algebraic viewpoint, {\it Trans. Amer. Math. Soc.} {\bf 358}(1) (2006) 115--129 (electronic).

\bibitem{PopovViolette} M. Dubouis -- Viloette and T. Popov, Homogeneous Algebras, Statistics and Combinatorics, {\it Lett Math Phys.} {\bf 61}(2), (2002), 159--170.
\end{thebibliography}
\end{document}